\newtheorem{theorem}{Theorem}[section]
\newtheorem{lemma}[theorem]{Lemma}
\theoremstyle{definition}
\theoremstyle{remark}
\DeclareMathOperator{\Gl}{GL}
\DeclareMathOperator{\Gal}{Gal}
\DeclareMathOperator{\Aut}{Aut}
\numberwithin{equation}{section}
\address{Department of Mathematics, Faculty of Science, University of Zagreb\\
	Bijeni\v cka cesta 30\\
	10000 Zagreb\\
	Croatia}
\email{tguzvic@math.hr }
\begin{document}

\title[]{Torsion of elliptic curves with rational $j$-invariant defined over number fields of prime degree}
% \title[short text for running head]{full title}

%    Only \author and \address are required; other information is
%    optional.  Remove any unused author tags.

%    author one information
% \author[short version for running head]{name for top of paper}
%    Information for first author
\author{Tomislav Gu\v{z}vi\'c}
%    \thanks will become a 1st page footnote.
\thanks{The author gratefully acknowledges support
from the QuantiXLie Center of Excellence, a project co-financed by the Croatian Government and European Union through the
European Regional Development Fund - the Competitiveness and Cohesion Operational Programme (Grant KK.01.1.1.01.0004) and
by the Croatian Science Foundation under the project no. IP-2018-01-1313.}

\date{}

\dedicatory{}

\begin{abstract}
Let $[K:\mathbb{Q}]=p$ be a prime number and let $E/K$ be an elliptic curve with $j(E) \in \mathbb{Q}$. We determine the all possibilities for $E(K)_{tors}$. We obtain these results by studying Galois representations of $E$ and of it's quadratic twists.
\end{abstract}

\maketitle
\section{Introduction}
Let $K$ be a number field such that $[K:\mathbb{Q}]=d$ and let $E/K$ be an elliptic curve. The set of $K$-rational points, $E(K)$ can be given a group structure. A theorem of Mazur shows that $E(K)$ is a finitely generated abelian group. Therefore this group can be decomposed as $E(K)=E(K)_{tors} \oplus \mathbb{Z}^{r}$, $r \ge 0$. It is known that $E(K)_{tors}$ is of the form $C_{m} \oplus C_{n}$ for two positive integers $m,n$ such that $m$ divides $n$, where $C_m$ and $C_n$ denote cyclic groups of order $m$ and $n$, respectively.
\\
\\
One of the goals in the theory of elliptic curves is the classification of torsion groups of elliptic curves defined over various fields. We will now briefly describe results related to this paper.
\\
Let $d$ be a positive integer.
\begin{enumerate}
    \item Let $\Phi(d)$ be the set of possible isomorphism classes of groups $E(K)_{tors}$, where $K$ runs through all number fields $K$ of degree $d$ and $E$ runs through all elliptic curves over $K$. In \cite{21}, Merel proved that $\Phi(d)$ is finite for all positive integers $d$. The set $\Phi(1)$ was determined by Mazur (\cite{11}) and $\Phi(2)$ was determined by Kenku, Momose and Kamienny (\cite{20}, \cite{13}). Derickx, Etropolski, Hoeij, Morrow and Zureick-Brown have determined $\Phi(3)$ but the result is (at the time of writing this paper) unpublished.
    
    \item Let $\Phi^{\mathrm{CM}}(d)$ be the set of possible isomorphism classes of groups $E(K)_{tors}$, where $K$ runs through all number fields $K$ of degree $d$ and $E$ runs through all elliptic curves with $\mathrm{CM}$ over $K$. $\Phi^{\mathrm{CM}}(1)$ has been determined by Olson in \cite{32} and $\Phi^{\mathrm{CM}}(d)$ for $d=3,4$ by Zimmer and his collaborators.
    The sets $\Phi^{\mathrm{CM}}(d)$, for $4\le d \le 13$ have been determined by Clark, Corn, Rice and Stankiewicz in \cite{30}. Bourdon and Pollack have determined torsion groups of $\mathrm{CM}$ elliptic curves over odd degree number fields in \cite{39}.

    \item Let $\Phi_{\mathbb{Q}}(d) \subseteq \Phi(d)$ be the set of possible isomorphism classes of groups $E(K)_{tors}$, where $K$ runs through all number fields $K$ of degree $d$ and $E$ runs through all elliptic curves defined over $\mathbb{Q}$. For $d=2,3$, the sets $\Phi_{\mathbb{Q}}(d)$ have been determined by Najman (\cite{7}) while $\Phi_{\mathbb{Q}}(4)$ has been determined by Chou (\cite{22}) and Gonz\'alez-Jim\'enez, Najman (\cite{2}). The set $\Phi_{\mathbb{Q}}(5)$ has been determined by Gonz\'alez-Jim\'enez in \cite{24}. Gonz\'alez-Jim\'enez and Najman have also proved that $\Phi_{\mathbb{Q}}(p)=\Phi(1)$ for prime $p \ge 7$ in \cite{2}. In \cite{41}, the author has partially determined $\Phi_{\mathbb{Q}}(6)$.

    \item Let $\Phi_{j \in \mathbb{Q}}(d)$ be the set of possible isomorphism classes of groups $E(K)_{tors}$, where $K$ runs through all number fields $K$ of degree $d$ and $E$ runs through all elliptic curves defined over $K$ such that $j(E) \in \mathbb{Q}$.

    \item Let $R_{\mathbb{Q}}(d)$ be the set of all primes $p$ such that there exists a number field $K$ of degree $d$, an elliptic curve $E/\mathbb{Q}$ such that there exists a point of order $p$ on $E(K)$. This set has been determined by Gonz\'alez-Jim\'enez and Najman (\cite{2}).
\end{enumerate}
We now state the main results of this paper in which we classify the sets $\Phi_{j \in \mathbb{Q}}(p)$, where $p$ is a prime number.

\begin{theorem}
Let $K$ be a number field such that $[K:\mathbb{Q}]=p \ge 7$ is prime and let $E/K$ be an elliptic curve with rational $j$-invariant. If $E(K)$ contains a point of order $n > 1$, then $C_{n} \in \Phi(1)$.
\end{theorem}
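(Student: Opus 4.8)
The plan is to push the problem down to elliptic curves defined over $\mathbb{Q}$ and then invoke the classification $\Phi_{\mathbb{Q}}(p)=\Phi(1)$ for prime $p\ge 7$, together with the determination of the sets $R_{\mathbb{Q}}(\cdot)$, both from \cite{2}. The first step is combinatorial: since $C_{n}\in\Phi(1)$ exactly when $n\in\{1,\dots,10,12\}$, and since a point of order $n$ produces a point of order $m$ for every $m\mid n$, a short divisibility check shows that it is enough to prove $E(K)$ has no point of order $\ell$ for a prime $\ell\ge 11$ and no point of order $n$ for $n\in\{14,15,16,18,20,21,24,25,27,35,49\}$; every integer not in $\{1,\dots,10,12\}$ is divisible by a prime $\ge 11$ or by one of these eleven numbers. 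The second step handles complex multiplication: if $j(E)$ is one of the thirteen rational CM $j$-invariants (in particular if $j(E)\in\{0,1728\}$), then $E$ has CM and is defined over a field of odd degree, so its torsion group is one of the finitely many listed in \cite{39}, and those realizable over a field of prime degree $p\ge 7$ all lie in $\Phi(1)$. So from now on $E$ is non-CM and $j(E)\neq 0,1728$; fixing an elliptic curve $E_{0}/\mathbb{Q}$ with $j(E_{0})=j(E)$, we have $E\cong E_{0}^{(d)}$ for some $d\in K^{*}$.

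The third step is the twisting argument. Set $L=K(\sqrt{d})$ and let $\chi_{d}\colon G_{K}\to\{\pm1\}$ be the character attached to $L/K$. Then $\rho_{E,n}\cong\rho_{E_{0},n}\otimes\chi_{d}$, and $\chi_{d}$ is trivial on $G_{L}$, so $\rho_{E,n}|_{G_{L}}\cong\rho_{E_{0},n}|_{G_{L}}$; hence a point of order $n$ in $E(K)\subseteq E(L)$ yields a point $P$ of order $n$ in $E_{0}(L)$. Since $[L:\mathbb{Q}]\in\{p,2p\}$, the subfield $\mathbb{Q}(P)$ has degree in $\{1,2,p,2p\}$. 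If that degree is $1$ or $p$, then $C_{n}\in\Phi_{\mathbb{Q}}(1)=\Phi(1)$ or $C_{n}\in\Phi_{\mathbb{Q}}(p)=\Phi(1)$. If it is $2$, write $\mathbb{Q}(P)=\mathbb{Q}(\sqrt{a})$ with $a\in\mathbb{Q}^{*}$; since $[K:\mathbb{Q}]=p$ is odd, $\sqrt{a}\notin K$, so $K(\sqrt{a})=K(\sqrt{d})$, i.e. $d\equiv a\pmod{(K^{*})^{2}}$, whence $E\cong(E_{0}^{(a)})_{K}$ is the base change of a curve defined over $\mathbb{Q}$ and $E(K)_{tors}\in\Phi_{\mathbb{Q}}(p)=\Phi(1)$. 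In each of these three cases $n$ is not an excluded order, so the argument reduces to the remaining case $[\mathbb{Q}(P):\mathbb{Q}]=2p$.

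It remains to rule out a non-CM $E_{0}/\mathbb{Q}$ with a point $P$ of order $n$ satisfying $[\mathbb{Q}(P):\mathbb{Q}]=2p$, where $n$ is excluded and $p\ge 7$ is prime. If $n=\ell$ is a prime $\ge 11$, this would give $\ell\in R_{\mathbb{Q}}(2p)$; but the determination of $R_{\mathbb{Q}}$ in \cite{2} yields $R_{\mathbb{Q}}(2p)=\{2,3,5,7\}$ for every prime $p\ge 7$ (roughly because such torsion comes from a rational $\ell$-isogeny whose character has order dividing $2p$, forcing $p\mid\ell-1$, which leaves only CM curves). If $n\in\{14,15,16,18,20,21,24,25,27,35,49\}$, one excludes such a point by passing to the prime-power components of $n$ and invoking the classification in \cite{2} of the torsion an elliptic curve over $\mathbb{Q}$ can gain over fields of degree dividing $2p$; equivalently, one checks that the modular curves $X_{1}(n)$ (all of genus $\ge 1$ for these $n$) have no degree-$2p$ point lying above a rational $j$-invariant. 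Only finitely many pairs $(n,p)$ need be inspected.

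The main obstacle is this last step. The reductions up to it --- the divisibility bookkeeping, the CM case via \cite{39}, and the twist identity $\rho_{E,n}\cong\rho_{E_{0},n}\otimes\chi_{d}$ together with its descent consequence --- are routine; the real work is the case of degree exactly $2p$, where $\Phi_{\mathbb{Q}}(p)$ is silent and one must control how far the torsion of a rational elliptic curve can grow over a field of degree $2p$ with $p\ge 7$ prime, drawing on the full strength of \cite{2} (and \cite{39} for the CM curves).
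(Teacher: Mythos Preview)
Your overall architecture is sound and runs parallel to the paper's: twist down to a curve $E_0/\mathbb{Q}$, push the point of order $n$ into $E_0(L)$ with $[L:\mathbb{Q}]\mid 2p$, and then control $[\mathbb{Q}(P):\mathbb{Q}]$. Your degree-$2$ descent (if $\mathbb{Q}(P)=\mathbb{Q}(\sqrt a)$ then $K(\sqrt a)=K(\sqrt d)$, hence $E$ is the base change of $E_0^{(a)}$) is correct and is exactly the mechanism behind the paper's Lemmas~3.2, 3.5 and 3.6.

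The gap is the final step. Your sentence ``one excludes such a point by \ldots\ invoking the classification in \cite{2}'' is not a proof, and the assertion that ``only finitely many pairs $(n,p)$ need be inspected'' is not justified. In fact you are one observation away from collapsing this step almost entirely: since $\mathbb{Q}(P)\subseteq\mathbb{Q}(E_0[n])$, the degree $[\mathbb{Q}(P):\mathbb{Q}]$ also divides $|\Gl_2(\mathbb{Z}/n\mathbb{Z})|$, and for $n$ supported on $\{2,3,5,7\}$ this order has all prime factors in $\{2,3,5,7\}$. Thus for every prime $p\ge 11$ one gets $[\mathbb{Q}(P):\mathbb{Q}]\mid\gcd(2p,|\Gl_2(\mathbb{Z}/n\mathbb{Z})|)\le 2$, so the degree-$2p$ case is vacuous and your earlier cases already finish the proof. (The paper packages this divisibility as Lemma~3.4 and inside Lemmas~3.5--3.10.)

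What genuinely remains is $p=7$, where $[\mathbb{Q}(P):\mathbb{Q}]\in\{7,14\}$ can occur precisely when $7\mid n$, i.e.\ for $n\in\{14,21,49\}$ on your list (the case $n=35$ is killed by looking at $7P$, whose field of definition has degree dividing $\gcd(14,|\Gl_2(\mathbb{Z}/5\mathbb{Z})|)=2$, so your own descent applies). These three cases do \emph{not} fall out of \cite{2}: the paper's Lemma~5.1 disposes of $n=14$ by showing $E_0$ would acquire a rational $14$-isogeny, forcing CM; of $n=21$ by reducing to the four rational $j$-invariants with a $21$-isogeny and a division-polynomial check; and of $n=49$ by citing \cite[Lemma~2.8]{1}. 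Your proposal needs to carry out (or cite) exactly these arguments; pointing at \cite{2} or at $X_1(n)$ does not substitute for them.
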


\begin{theorem}
Let $K$ be a number field such that $[K:\mathbb{Q}]=5$ and let $E/K$ be an elliptic curve with rational $j$-invariant. If $E(K)$ contains a point of order $n > 1$, then $C_{n} \in \Phi_{\mathbb{Q}}(5)$.
\end{theorem}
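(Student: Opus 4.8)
The plan is to exploit that a rational $j$-invariant forces $E$ to be a twist of an elliptic curve over $\mathbb Q$, and then to read the constraint on $n$ off the mod-$n$ Galois representation of that rational model. First I would dispose of the cases $j(E)\in\{0,1728\}$: these curves have complex multiplication, so the torsion they can acquire over a quintic field is controlled by the known classification of torsion of CM elliptic curves over odd-degree number fields, leaving only finitely many point orders, each readily checked to lie in $\Phi_{\mathbb Q}(5)$. Assume henceforth $j(E)\neq 0,1728$. Then there is $E'/\mathbb Q$ with $j(E')=j(E)$ and $d\in K^\times$ with $E\cong_K E'^{(d)}$; let $\chi=\chi_d$ be the quadratic character of $K(\sqrt d)/K$. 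Via the isomorphism $E'^{(d)}\cong E'$ over $K(\sqrt d)$, a point $P\in E(K)$ of order $n$ yields a cyclic subgroup $C\subseteq E'[n](\overline{\mathbb Q})$ of order $n$ that is $G_K$-stable and on which $G_K$ acts through $\chi$. (If $d$ is a square in $K$ this simply says $E'$ has a point of order $n$ over $K$, giving $C_n\in\Phi_{\mathbb Q}(5)$ at once.) Since $G_K$ stabilises $C$ we get $\mathbb Q(C)\subseteq K$, hence $[\mathbb Q(C):\mathbb Q]\in\{1,5\}$.

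\textbf{Case $\mathbb Q(C)=\mathbb Q$.} Then $E'$ carries a rational cyclic $n$-isogeny, and $G_{\mathbb Q}$ acts on $C$ through a character $\psi$ with $\psi|_{G_K}=\chi$. Thus $\psi^2$ is trivial on $G_K$, so it cuts out an abelian subextension of $K/\mathbb Q$; unless $K/\mathbb Q$ is cyclic of degree $5$ and $5\mid\varphi(n)$ (so $n\in\{11,25,\dots\}$), this forces $\psi=\chi_\delta$ quadratic for some $\delta\in\mathbb Q^\times$. The quadratic twist $E'^{(\delta)}/\mathbb Q$ then has $G_{\mathbb Q}$ acting trivially on the relevant order-$n$ subgroup, so $E'^{(\delta)}(\mathbb Q)$ contains a point of order $n$; by Mazur's theorem $n\in\{1,\dots,10,12\}$ and $C_n\in\Phi(1)\subseteq\Phi_{\mathbb Q}(5)$. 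The residual cases $n\in\{11,25\}$ are disposed of directly, using Mazur's list of rational isogeny degrees together with the known shape of $\Phi_{\mathbb Q}(5)$.

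\textbf{Case $\mathbb Q(C)=K$.} Now $(E',C)$ is a degree-$5$ point on $X_0(n)$, and $G_{K(\sqrt d)}=\ker\chi$ fixes a generator of $C$, so $E'$ has a point of order $n$ over $K(\sqrt d)$, a field of degree $5$ or $10$. Combining the determination of $R_{\mathbb Q}(5)$ with a bound on the prime powers that can occur and a short argument, via the $G_K$-module structure of $E'[n]$, ruling out composite $n$ divisible by two distinct ``large'' primes, one confines $n$ to a short explicit list (small prime powers together with $n=7,11$). For each surviving value I would exhibit a rational elliptic curve with a point of order $n$ over a suitable quintic field: directly when $\sqrt d\in K$, and otherwise by locating a quintic subfield of $K(\sqrt d)$ carrying the point or by a direct computation on the modular curve $X_1(n)$; this gives $C_n\in\Phi_{\mathbb Q}(5)$.

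The main obstacle is this last case, where the point of order $n$ is a priori visible only over the degree-$10$ field $K(\sqrt d)$ and must be recovered, for some rational curve, over a degree-$5$ field. Pinning down the finitely many admissible $n$ and handling each by explicit twisting and modular-curve computations over quintic fields is where the bulk of the work lies.
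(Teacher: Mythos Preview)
Your Case~1 argument is clean and is a genuine alternative to what the paper does: by tracking the character $\psi$ through which $G_{\mathbb Q}$ acts on the rational cyclic subgroup $C$, you twist $E'$ to a curve with a rational point of order $n$ and invoke Mazur's torsion theorem directly, rather than only Mazur--Kenku on isogenies. The residual analysis ($5\mid\varphi(n)$ together with $n$ in the Mazur--Kenku isogeny list forces $n\in\{11,25\}$, both in $\Phi_{\mathbb Q}(5)$) is correct.

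The gap is Case~2. First, a small slip: you need $R_{\mathbb Q}(10)$, not $R_{\mathbb Q}(5)$, since $E'$ acquires the point over $K(\sqrt d)$ of degree $10$. More importantly, the plan ``confine $n$ to a short explicit list and then exhibit, for each surviving $n$, a rational elliptic curve with an order-$n$ point over a quintic field'' is aimed in the wrong direction. The set $\Phi_{\mathbb Q}(5)$ is already known (Gonz\'alez--Jim\'enez), so no examples need to be produced; what must be done is to \emph{exclude} the values of $n$ that are not in $\Phi_{\mathbb Q}(5)$. Your confinement step, as described, does not do this: knowing that the prime divisors of $n$ lie in $\{2,3,5,7,11\}$ and that ``two distinct large primes'' cannot simultaneously divide $n$ still leaves values such as $n=15$, $16$, $18$, $27$, $50$, $121$, $125$ on the table, none of which lie in $\Phi_{\mathbb Q}(5)$, and each of which requires a separate argument to eliminate. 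Nothing in your Case~2 sketch explains how you would do that; ``modular-curve computations over quintic fields'' is not a method for proving non-existence.

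The paper sidesteps your Case~1/Case~2 dichotomy entirely. It works prime-power by prime-power and shows that, regardless of which cyclic subgroup your point generates, $E'$ is forced to have a \emph{rational} $q^a$-isogeny for each prime power $q^a\| n$: for $q\in\{2,3\}$ this is automatic because $5\nmid|\Gl_2(\mathbb Z/q^a\mathbb Z)|$, and for $q\in\{5,7\}$ one uses that a point of order $q$ on $E'$ over a degree-$10$ field already forces a rational $q$-isogeny. This puts $n$ (or a closely related product) on the Mazur--Kenku list, after which specific values ($15$, $16$, $18$, $27$, $50$, $121$, $125$) are knocked out one at a time by ad hoc arguments (division polynomials, isogeny-degree contradictions, Weil pairing obstructions). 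That last step is where the actual content lies, and it is precisely what your Case~2 leaves undone.
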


\begin{theorem}
Let $K$ be a number field such that $[K:\mathbb{Q}]=3$ and let $E/K$ be an elliptic curve with rational $j$-invariant. If $E(K)$ contains a point of order $n > 1$, then $C_{n} \in \Phi_{\mathbb{Q}}(3)$.
\end{theorem}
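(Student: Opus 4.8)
\emph{Reduction to a quadratic twist of a rational curve.} The plan is to deduce the statement from the classification of torsion of rational elliptic curves over cubic fields, exploiting that an elliptic curve with rational $j$-invariant becomes, after a twist, defined over $\mathbb{Q}$. First dispose of $j(E)\in\{0,1728\}$: then $E$ has complex multiplication by an order in $\mathbb{Q}(\sqrt{-3})$ or $\mathbb{Q}(i)$, and the cubic field $K$ contains neither of these imaginary quadratic fields, so $E(K)_{tors}$ is severely restricted; by the classification of torsion of CM elliptic curves over cubic number fields (\cite{30}, \cite{39}) every $n>1$ that occurs satisfies $C_{n}\in\Phi_{\mathbb{Q}}(3)$ by Najman's determination of $\Phi_{\mathbb{Q}}(3)$ (\cite{7}). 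So assume $j(E)\notin\{0,1728\}$. Then $E$ is $K$-isomorphic to a quadratic twist $E'_{d}$ of a chosen elliptic curve $E'/\mathbb{Q}$ with $j(E')=j(E)$, for some $d\in K^{\times}/(K^{\times})^{2}$; this case also contains the remaining rational CM $j$-invariants. If $\sqrt{d}\in K$ then $E$ is already defined over $\mathbb{Q}$ and we are done; so set $L:=K(\sqrt{d})$, a field of degree $6$ over $\mathbb{Q}$.

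\emph{From a torsion point to isogeny data over $\mathbb{Q}$.} The twisting isomorphism $E'_{d}\xrightarrow{\sim}E'$ is defined over $L$ and is negated by the generator $\sigma$ of $\Gal(L/K)$. Hence a point $P\in E'_{d}(K)$ of exact order $n>2$ maps to a point $Q\in E'(L)$ of exact order $n$ with $\sigma(Q)=-Q$ (the $2$-torsion is insensitive to the twist: $E'_{d}[2](K)=E'[2](K)$). Therefore $\langle Q\rangle$ is stable under $G_{K}$, and $G_{K}$ acts on it through the quadratic character of $L/K$; equivalently, $E'_{K}:=E'\times_{\mathbb{Q}}K$ carries a $K$-rational cyclic $n$-isogeny whose kernel becomes pointwise rational exactly over $L$. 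In particular $E'/\mathbb{Q}$ has a point of order $n$ over the degree-$6$ field $L$, and for each prime power $\ell^{a}$ dividing $n$ one gets a $K$-rational cyclic $\ell^{a}$-isogeny on $E'_{K}$ whose kernel character has order dividing $2$.

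\emph{Bounding the level.} It remains to bound $\ell$ and $a$. If $\langle Q\rangle$ is already $G_{\mathbb{Q}}$-stable, then $E'/\mathbb{Q}$ has a rational $\ell$-isogeny, so by Mazur's isogeny theorem $\ell\in\{2,3,5,7,11,13,17,19,37,43,67,163\}$; the requirement that the kernel character restrict to a quadratic character on $G_{K}$ forces its order over $\mathbb{Q}$ to divide $6$, and comparing with the known kernel-character orders of rational $\ell$-isogenies leaves only $\ell\in\{2,3,5,7,13\}$. If $\langle Q\rangle$ is not $G_{\mathbb{Q}}$-stable, then the at most three $G_{\mathbb{Q}}$-conjugate lines $\langle Q^{g}\rangle$ are permuted by $\Gal(\widetilde{K}/\mathbb{Q})$, where $\widetilde{K}$ is the Galois closure of $K$, which confines the mod-$\ell$ image of $E'/\mathbb{Q}$ to a Borel subgroup, a Cartan normalizer, or an exceptional subgroup, and produces a non-cuspidal cubic point on the corresponding modular curve; using the classification of cubic points on $X_{0}(\ell)$ and on the Cartan curves, together with the fact that $\det\rho_{E',\ell}$ is the mod-$\ell$ cyclotomic character of order $\ell-1$ (so that for $\ell\in\{11,17,19\}$ no orbit of a nonzero vector can have the size needed for a point of order $\ell$ over a degree-$6$ field, and none exists for larger $\ell$), one concludes that every prime dividing $n$ lies in $\{2,3,5,7,13\}$. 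Finally one bounds the exponents by studying low-degree points on $X_{1}(\ell^{a})$, ruling out the prime powers not already present in $\Phi_{\mathbb{Q}}(3)$ (such as $16$, $25$, $27$), treats the remaining composite possibilities such as $n=15$ and $n=20$ via the corresponding modular curves, and checks that every surviving value of $n$ satisfies $C_{n}\in\Phi_{\mathbb{Q}}(3)$, again by \cite{7}.

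\emph{The main obstacle.} The difficulty is concentrated in the case where passing from $K$ to the sextic field $L=K(\sqrt{d})$ creates torsion or isogenies that are invisible over $\mathbb{Q}$: a priori an elliptic curve over $\mathbb{Q}$ can acquire $11$-, $17$-, $19$- or $16$-torsion over suitable degree-$6$ fields, so one cannot argue merely through a classification of $\Phi_{\mathbb{Q}}(6)$, which is moreover only partially known. What rescues the argument is the rigidity that $L$ is quadratic over a cubic field, together with the precise module structure --- $\sigma$ acts as $-1$ on the torsion point, i.e.\ the $\ell$-isogeny is already defined over $K$ and has a quadratic kernel character. Converting this rigidity into effective non-existence statements about rational and cubic points on the relevant modular curves (and into the determinant count that rules out $\ell\in\{11,17,19\}$) is where essentially all of the work lies.
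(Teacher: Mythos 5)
Your overall reduction (twist to a curve $E'/\mathbb{Q}$, pass to the sextic field $L=K(\sqrt{d})$, note that $\sigma$ acts by $-1$ on the transported torsion point so that $\langle Q\rangle$ is $G_K$-stable with quadratic kernel character) is sound and is exactly the mechanism the paper sets up in its Lemmas 3.2--3.5. But from that point on your argument is a plan rather than a proof, and the places you defer are precisely where the difficulty lives. For the prime bound you invoke ``the classification of cubic points on $X_0(\ell)$ and on the Cartan curves,'' which is not available in the generality you need and is also unnecessary: since $Q\in E'(L)$ with $[L:\mathbb{Q}]=6$, the degrees of fields of definition of $\ell$-torsion points of rational elliptic curves (González-Jiménez--Najman, as packaged in the paper's Lemma~3.1, giving $R_{\mathbb{Q}}(6)=\{2,3,5,7,13\}$) already exclude $\ell=11,17,19$ and all larger primes; your determinant/orbit heuristic is an assertion of that result, not a proof of it.

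The more serious gap is the final paragraph. You propose to eliminate $16$, $25$, $27$, $15$, $20$, etc.\ ``by studying low-degree points on $X_1(\ell^a)$'' and ``via the corresponding modular curves,'' but this cannot work as stated: $C_{16}$ and $C_{15}$ both occur for elliptic curves over $\mathbb{Q}$ already over \emph{quadratic} fields (Najman's Theorem, the paper's Theorem~2.4), and such quadratic fields can perfectly well embed into $L=K(\sqrt{d})$, so $X_1(16)$ and $X_1(15)$ do have points in the relevant low degrees. Ruling these out requires using the twist structure itself, which is what the paper does: for $n=16$ one needs the Rouse--Zureick-Brown $2$-adic image classification together with the degrees-of-definition data to pin $G_{E'}(16)$ down to $H_{235l}$ or $H_{235m}$, and then the contradiction that the forced containment $G_{E'}(16)\le B$ (order $256$) would give $-I\in G_{E'}(16)$, which fails (Lemma~3.7); for $n=15$ one needs the Weil-pairing/$\zeta_5$ argument showing that $C_5\subseteq E(K)$ over a cubic field forces $E$ to be a base change of a rational curve (Lemma~3.5), after which $C_{15}\notin\Phi_{\mathbb{Q}}(3)$ finishes. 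None of this is supplied or even signalled by your sketch. Note also that the paper's actual proof for cubic $K$ is much shorter than your plan: it first applies the sextic classification of torsion of rational elliptic curves (Theorem~2.8) to cut the list down to $n\in\{15,16,30\}$ outside $\Phi_{\mathbb{Q}}(3)$, and then deals with $15$ and $16$ as above; if you want to avoid that sextic input, you must genuinely redo those eliminations, which your proposal does not.
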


\begin{theorem}
Let $K$ be a number field such that $[K:\mathbb{Q}]=2$ and let $E/K$ be an elliptic curve with rational $j$-invariant. If $E(K)$ contains a point of order $n > 1$, then $C_{n} \in \Phi_{\mathbb{Q}}(2) \cup \{ C_{13} \}$.
\end{theorem}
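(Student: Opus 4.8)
\section*{Proof proposal for Theorem 1.4}

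\emph{Plan.} Fix $K=\mathbb{Q}(\sqrt m)$ with nontrivial automorphism $\sigma$, an elliptic curve $E/K$ with $j(E)\in\mathbb{Q}$, and a point $P\in E(K)$ of order $n$; the goal is $C_n\in\Phi_{\mathbb{Q}}(2)\cup\{C_{13}\}$. First dispose of $j(E)\in\{0,1728\}$: then $E$ has complex multiplication, and the claim follows from the known classification of torsion of CM elliptic curves over quadratic fields, whose groups all lie in $\Phi_{\mathbb{Q}}(2)$ and none of which is $C_{13}$. So assume $j(E)\notin\{0,1728\}$, fix $E'/\mathbb{Q}$ with $j(E')=j(E)$, and write $E\cong (E')^{(d)}$ over $K$ for some $d\in K^{*}$. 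If the class of $d$ in $K^{*}/(K^{*})^{2}$ lies in the image of $\mathbb{Q}^{*}$, then $E$ is the base change of a quadratic twist of $E'$ defined over $\mathbb{Q}$, so $E(K)_{tors}\in\Phi_{\mathbb{Q}}(2)$ and we are done. Hence we may assume the \emph{essential case} $d\notin\mathbb{Q}^{*}(K^{*})^{2}$; a short computation then shows that the quadratic character $\chi_d\colon G_K\to\{\pm1\}$ cutting out $K(\sqrt d)/K$ does not extend to a $\{\pm1\}$-valued character of $G_{\mathbb{Q}}$.

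The core is the \emph{twisting step}. Let $\psi\colon E\to (E')_K$ be the twisting isomorphism, defined over $K(\sqrt d)$ and satisfying $\psi^{g}=\chi_d(g)\psi$ for $g\in G_K$. Then $Q:=\psi(P)\in E'(K(\sqrt d))$ has order $n$ and $g(Q)=\chi_d(g)Q$ for all $g\in G_K$, so $C:=\langle Q\rangle\subseteq E'[n]$ is a cyclic subgroup of order $n$ that is $G_K$-stable with $G_K$ acting through $\chi_d$. Since $G_K\triangleleft G_{\mathbb{Q}}$, the $G_{\mathbb{Q}}$-orbit of $C$ has size at most $2$, so there are two cases. \textbf{Case (a):} $C$ is $G_{\mathbb{Q}}$-stable. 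Then $E'$ admits a rational cyclic $n$-isogeny and $G_{\mathbb{Q}}$ acts on $C$ through a character $\mu\colon G_{\mathbb{Q}}\to(\mathbb{Z}/n)^{*}$ with $\mu|_{G_K}=\chi_d$. Here $\mu\neq 1$ (else $d\in(K^{*})^{2}$), $\mu$ cannot have order $2$ (else $\chi_d$ would extend, against the essential case), and $\mu^{2}|_{G_K}=1$ forces $\mu$ to have order $1$, $2$, or $4$; hence $\mu$ has order exactly $4$. In particular $(\mathbb{Z}/n)^{*}$ contains an element of order $4$, and the fixed field of $\ker\mu$ is a cyclic quartic field with quadratic subfield $K$, which represents $d$; so $(E')^{(d)}$ is \emph{not} a base change from $\mathbb{Q}$. \textbf{Case (b):} $C$ is not $G_{\mathbb{Q}}$-stable. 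For $n=\ell$ prime this forces $E'[\ell]=C\oplus C^{\sigma}$, so $\bar\rho_{E',\ell}(G_{\mathbb{Q}})$ lies in the normalizer of a split Cartan subgroup and $E'$ gives a noncuspidal rational point of $X_{\mathrm{sp}}^{+}(\ell)$; for prime powers one reduces to a generalized split-Cartan curve.

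Now run over the possible $n$. By the classification of $\Phi(2)$ (Kenku--Momose--Kamienny) the order $n$ of a torsion point over a quadratic field satisfies $n\le 16$ or $n=18$. For every such $n$ with $C_n\in\Phi_{\mathbb{Q}}(2)$ — in particular all $n\le 11$ (using Mazur's $\Phi(1)$ and the value of $R_{\mathbb{Q}}(2)$ for $n=11$) and $n\in\{12,15,16\}$ — there is nothing to prove. For $n$ with $C_n\notin\Phi(2)$, namely $n\in\{17,25,27\}$ and for $n=18$, Case (a) is impossible ($(\mathbb{Z}/n)^{*}$ has no element of order $4$, or no rational cyclic $n$-isogeny exists), while Case (b) places $E'$ on a modular curve of split-Cartan type whose rational points are all CM (Bilu--Parent--Rebolledo and the generalized versions), so $E$ has CM and the CM classification excludes such torsion. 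The only remaining prime is $\ell=13$. In Case (a) with $n=13$, $\mu$ has order $4$, i.e.\ $E'/\mathbb{Q}$ has a rational $13$-isogeny with isogeny character of order $4$; here I would exhibit an explicit curve, so that the group $C_{13}$ genuinely occurs (over the quadratic subfield of a cyclic quartic field inside $\mathbb{Q}(\zeta_{13})$), and observe that no larger torsion can accompany the point of order $13$ because $C_{2\cdot 13},C_{3\cdot 13},\dots\notin\Phi(2)$. In Case (b) with $n=13$, the determination of $X_{\mathrm{sp}}^{+}(13)(\mathbb{Q})$ by Balakrishnan--Dogra--M\"uller--Tuitman--Vonk shows that every noncuspidal rational point is CM, so $E$ has CM and a $13$-torsion point is again impossible. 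Finally, composite $n$ (such as $14,18,26,39$) are handled by applying the dichotomy prime by prime together with the bound from $\Phi(2)$, and one checks that nothing outside $\Phi_{\mathbb{Q}}(2)\cup\{C_{13}\}$ survives.

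The main obstacle is the case $\ell=13$: one must (i) analyze the possible shapes of the isogeny characters of rational $13$-isogenies to pin down exactly which twists $(E')^{(d)}$ acquire a $13$-torsion point over a quadratic field, verify the resulting group is precisely $C_{13}$, and produce an explicit example; and (ii) invoke the deep determination of the rational points of the ``cursed curve'' $X_{\mathrm{sp}}^{+}(13)$ to eliminate the split-Cartan branch. A secondary, more bookkeeping difficulty is to check, uniformly over all classes $d\notin\mathbb{Q}^{*}(K^{*})^{2}$ and all $n$ permitted by $\Phi(2)$, that no group beyond $\Phi_{\mathbb{Q}}(2)\cup\{C_{13}\}$ appears; here the quantitative input from $\Phi(2)$, together with $R_{\mathbb{Q}}(2)$ for the prime $11$ and $\Phi^{\mathrm{CM}}(2)$ in the CM branches, does most of the work.
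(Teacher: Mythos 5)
Your twisting dichotomy (case (a): $G_{\mathbb{Q}}$-stable line, hence a rational cyclic $n$-isogeny with isogeny character of order $4$; case (b): normalizer of a split Cartan) is set up correctly, but the proof as proposed has genuine gaps exactly at the values of $n$ the theorem is really about. First, your claim that $C_n\in\Phi_{\mathbb{Q}}(2)$ for all $n\le 11$ is false for $n=11$: Najman's classification gives $C_m$ only for $m=1,\dots,10,12,15,16$, and quoting $R_{\mathbb{Q}}(2)$ does not help, since $E$ need not be a base change of a curve over $\mathbb{Q}$; in your own framework $n=11$ must run through the dichotomy (case (a) dies because $(\mathbb{Z}/11\mathbb{Z})^{\times}$ has no element of order $4$, but case (b) then needs the split-Cartan curve at $11$ plus the CM torsion classification, which you do not address). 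Second, and more seriously, after the $\Phi(2)$ bound the orders that must actually be excluded are $n=14$ and $n=18$, and for these your case (b) dismissal --- ``the split-Cartan-type modular curve has only CM rational points'' --- fails: the primes involved are $2$, $3$, $7$, and $X_{\mathrm{sp}}^{+}(\ell)$ for $\ell\le 7$ has genus $0$ with infinitely many non-CM rational points, so nothing is excluded there. The sentence ``composite $n$ are handled prime by prime and one checks that nothing survives'' is precisely where the content of the theorem lies and is not carried out; likewise the realization of $C_{13}$ is deferred (``I would exhibit an explicit curve''). By contrast, your listing of $n\in\{17,25,27\}$ is vacuous, since those orders are already ruled out by $\Phi(2)$.

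For comparison, the paper's degree-$2$ argument avoids all of this machinery: if $C_n\subseteq E(K)$ with $C_n\in\Phi(2)\setminus\Phi_{\mathbb{Q}}(2)=\{C_{11},C_{13},C_{14},C_{18}\}$, then $E$ is not a base change, and the rational model $E'$ acquires a point of order $n$ over the quartic field $L$ where $E\cong E'$; Theorem \ref{Theorem 2.6} (the classification of $\Phi_{\mathbb{Q}}(4)$) eliminates $11$, $14$ and $18$ in one stroke, and $C_{13}$ is shown to occur by taking Chou's example of an elliptic curve over $\mathbb{Q}$ with a $13$-torsion point over a quartic Galois field $L$ and twisting it down to the quadratic subfield of $L$. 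Your order-$4$ isogeny character in case (a) is in fact the mechanism underlying that example, but as written your route would require the deep split-Cartan results at $11$ and $13$ and a genuinely new argument at $14$ and $18$, so it does not yet constitute a proof.
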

Obviously we have $\Phi_{\mathbb{Q}}(d) \subseteq \Phi_{j \in \mathbb{Q}}(d)$. If $E/K$ is an elliptic curve such that $j(E) \in \mathbb{Q} \setminus \{ 0, 1728 \}$, take $E'/\mathbb{Q}$ to be any elliptic curve such that $j(E')=j(E)$. Then $E$ and $E'$ are either isomorphic over $K$ or over some quadratic extension $L$ of $K$. Assume that $C_{m} \oplus C_{n} \subseteq E(K)$. This implies that $C_{m} \oplus C_{n} \subseteq E'(L)$, so $C_{m} \oplus C_{n}$ is a subgroup of one of the groups appearing in $\Phi_{\mathbb{Q}}(2d)$.
\\
\\
Let us consider the case when $d$ is odd. Assume that $C_{m} \oplus C_{n}$, where $m$ divides $n$ is contained in $E(K)$. By the properties of Weil pairing we have $\mathbb{Q}(\zeta_{m}) \subseteq K$. If $m \ge 3$, $[\mathbb{Q}(\zeta_m):\mathbb{Q}]$ is even so $\mathbb{Q}(\zeta_{m})$ can't be a subfield of $K$. Therefore, when trying to classify $\Phi_{j \in \mathbb{Q}}(p)$ we shall consider only groups of the form $C_n$ and $C_2 \oplus C_{2n}$. 
\\
\\
We now describe the general strategy used to solve this problem. Let $K$ be a number field of degree $p$ and $E(K)$ be an elliptic curve with $j(E) \in \mathbb{Q} \setminus \{ 0, 1728 \}$ and let $P_n \in E(K)$ be a point of order $n$. If $E$ is a base change of an elliptic curve defined over $\mathbb{Q}$, we are done, because $E(K)_{tors} \in \Phi_{\mathbb{Q}}(p)$. Otherwise, we take any elliptic curve $E'$ defined over $\mathbb{Q}$ such that $j(E')=j(E)$. With $L$ we shall denote (unless otherwise stated) a quadratic extension of $K$ such that $E$ and $E'$ are isomorphic over $L$, so they are quadratic twists of each other. On the one hand, if $\{ P_n, Q_n \}$ is a basis for $E[n]$, the image of $\rho_{E,n}$ is a subgroup of $B_{0}$, where \[ B_{0}:=\bigg\{ \begin{bmatrix}
 1 && * \\
0 && *
\end{bmatrix} \bigg\} .\] Since $\rho_{E,n} \sim \chi \cdot \rho_{E',n}$, where $\chi$ is a quadratic character, we can obtain some information about the mod $n$ Galois representation of $E'$. On the other hand, since $E$ and $E'$ are isomorphic over $L$, there exists a point $P'_n \in E'(L)$ of order $n$. For each prime divisor $q$ of $n$, let $P'_q \in E'(L)$ be a point of order $q$. Obviously $[\mathbb{Q}(P'_q):\mathbb{Q}]$ is a divisor of $[L:\mathbb{Q}]$. Using the results of \cite{2}, we can check the possible values of $[\mathbb{Q}(P'_q):\mathbb{Q}]$. Often it turns out that $E'$ has a rational $q$-isogeny.

\section{Notation and auxiliary results}

Let $E/F$ be an elliptic curve defined over a number field $F$. There exists an $F$-rational cyclic isogeny $\phi : E \to E'$ of degree $n$ if and only if $\ker\phi$ is a $Gal(\Bar{F}/F)$-
invariant cyclic group of order $n$; in this case we say that $E$ has an $F$-rational $n$-isogeny.
When $F=\mathbb{Q}$, possible degrees $n$ of elliptic curves over $\mathbb{Q}$ are known by the following theorem.

\begin{theorem}[Mazur, \cite{11}]
\label{Theorem 2.1}
Let $E/\mathbb{Q}$ be an elliptic curve. Then

\[ E(\mathbb{Q})_{tors} \cong
\begin{cases} 
      C_m, & m=1,...,10, 12, \\
      C_{2} \oplus C_{2m}, & m=1,...,4.
   \end{cases} \]
\end{theorem}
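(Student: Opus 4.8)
The theorem is Mazur's, and its proof is deep; the plan is to outline the strategy via modular curves. A non-cuspidal $\mathbb{Q}$-rational point of $X_1(N)$ is essentially the datum of a pair $(E,P)$ with $E/\mathbb{Q}$ an elliptic curve and $P \in E(\mathbb{Q})$ of exact order $N$, while $X_0(N)$ parametrizes elliptic curves over $\mathbb{Q}$ with a rational cyclic $N$-isogeny. The first reduction is group-theoretic and, as already noted in this paper, uses the Weil pairing: if $C_m \oplus C_n \subseteq E(\mathbb{Q})$ with $m \mid n$ then $E[m] \subseteq E(\mathbb{Q})$, forcing $\mathbb{Q}(\zeta_m) = \mathbb{Q}$ and hence $m \le 2$. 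So it suffices to determine all $N$ for which some $E/\mathbb{Q}$ has a rational point of order $N$, and afterwards check which groups $C_2 \oplus C_{2k}$ occur. Since a rational point of order $N'$ produces one of order $N$ for every $N \mid N'$, and the covers $X_1(N') \to X_1(N)$ send cusps to cusps, this reduces the problem to a finite list of composite and prime-power $N$ together with the family of primes. The genus-$0$ values $N \in \{1,\dots,10,12\}$ give curves $X_1(N)$ with a rational cusp, hence infinitely many rational points, and these produce precisely the groups in the statement; everything else must be shown to carry only cuspidal rational points.

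For the remaining composite and prime-power values I would argue by genus. For $N = 11, 14, 15$ the curve $X_1(N)$ is an elliptic curve of Mordell--Weil rank $0$ over $\mathbb{Q}$, so one computes its finite group of rational points and verifies they are all cusps. For $N = 16, 18, 20, 21, 24, 25, 27, \dots$ (where $X_1(N)$ has genus $\ge 2$) one passes to the Jacobian $J_1(N)$, shows $J_1(N)(\mathbb{Q})$ is finite --- classically via nonvanishing of the relevant $L$-values, or via Eisenstein-quotient considerations --- and then uses that $X_1(N)(\mathbb{Q})$ injects into the computable torsion group $J_1(N)(\mathbb{Q})_{tors}$, where one checks that no non-cuspidal point survives.

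The core of the theorem is the prime case. A rational point of order $p$ on $E/\mathbb{Q}$ gives a rational $p$-isogeny $E \to E/\langle P \rangle$, so one first invokes Mazur's classification of rational isogenies of prime degree: a rational $p$-isogeny exists only for $p \le 19$ or $p \in \{37, 43, 67, 163\}$. For $p = 13$, $X_0(13)$ has genus $0$ but the genus-$2$ curve $X_1(13)$ has rank-$0$ Jacobian over $\mathbb{Q}$ and only cuspidal rational points; for $p = 11, 17, 19, 37, 43, 67, 163$ the curve $X_0(p)$ has only finitely many non-cuspidal rational points, with known $j$-invariants, and one checks that none of those elliptic curves has a rational point of order $p$ (only the isogeny). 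The engine beneath the isogeny classification is Mazur's Eisenstein-ideal analysis of $J_0(p)$: one shows the Eisenstein quotient $\widetilde{J} = J_0(p)/I J_0(p)$ has finite Mordell--Weil group, proves that the composite $X_0(p) \to \widetilde{J}$ (based at a rational cusp) is a formal immersion at a cusp modulo a well-chosen auxiliary prime $\ell$, and then forces any rational point to reduce to a cusp modulo $\ell$ and hence to be a cusp, contradicting a non-cuspidal rational point for $p$ outside the allowed list.

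The main obstacle is exactly this prime case, and within it the genuinely hard input is twofold: the arithmetic statement that the Eisenstein quotient has rank $0$ (ultimately an $L$-value nonvanishing statement), and the careful set-up of the formal-immersion and mod-$\ell$ reduction argument controlling how rational points specialize. By comparison, once the relevant rank-$0$ facts are in hand, the composite and prime-power cases reduce to explicit finite computations with Jacobians of modular curves, and the final bookkeeping for the groups $C_2 \oplus C_{2k}$ follows from the list of admissible cyclic orders.
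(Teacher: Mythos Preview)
The paper does not prove this statement at all: Theorem~2.1 is simply quoted with attribution to Mazur \cite{11} and used as a black box throughout, so there is no ``paper's own proof'' to compare your outline against. Your sketch is a fair high-level summary of Mazur's actual strategy (Weil pairing to force $m\le 2$, genus bookkeeping on $X_1(N)$ for small $N$, and the Eisenstein-ideal/formal-immersion machinery for the prime case), but note one organizational wrinkle: you invoke the classification of rational prime-degree isogenies \cite{12} as an input to the torsion theorem, whereas historically and logically both results rest on the same Eisenstein-ideal analysis of $J_0(p)$ from \cite{11}, so citing the isogeny theorem here is at best redundant rather than a genuine reduction. For the purposes of this paper, however, the intended ``proof'' is simply the citation, and nothing more is expected.
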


\begin{theorem}[Kenku, Momose, \cite{20}, Kamienny \cite{13}]
\label{Theorem 2.2}
Let $E/F$ be an elliptic curve over a quadratic number field $F$. Then 
\[ E(F)_{tors} \cong
\begin{cases} 
      C_m, & m=1,...,16,18, \\
      C_{2} \oplus C_{2m}, & m=1,...,6,\\
      C_3 \oplus C_{3m}, & m=1,2, \\
      C_4 \oplus C_4.
   \end{cases} \]
   
\end{theorem}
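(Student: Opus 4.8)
The plan is to reformulate the statement in terms of low-degree points on modular curves and then combine a uniform bound on the prime powers that can divide the torsion with a finite, level-by-level analysis of what remains. A subgroup $C_{m}\oplus C_{n}\subseteq E(F)_{tors}$ with $m\mid n$ over a quadratic field $F$ corresponds to a non-cuspidal point of degree at most $2$ over $\mathbb{Q}$ on the modular curve $X_{1}(m,n)$ classifying such level structures (with $X_{1}(1,n)=X_{1}(n)$); conversely, a quadratic point that is neither a cusp nor one of the finitely many exceptional CM points realizes the corresponding group over a quadratic field. So the theorem amounts to: (i) deciding for which $(m,n)$ the curve $X_{1}(m,n)$ has infinitely many quadratic points, and (ii) for the finitely many remaining $(m,n)$, classifying the finitely many quadratic points and checking which groups they realize.

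For (i), the levels with $X_{1}(m,n)$ of genus $0$ contribute all the groups obtained by base-changing an elliptic curve over $\mathbb{Q}$ ($C_{m}$ for $m\le 10,12$ and $C_{2}\oplus C_{2m}$ for $m\le 4$); the levels of genus $1$ --- including $X_{1}(11),X_{1}(14),X_{1}(15)$ --- are elliptic curves with a rational cusp and hence acquire non-cuspidal quadratic points over infinitely many quadratic fields, realizing $C_{11},C_{14},C_{15}$ and the corresponding products; and the genus-$2$ hyperelliptic curves $X_{1}(13),X_{1}(16),X_{1}(18)$ carry quadratic points realizing the new cyclic groups $C_{13},C_{16},C_{18}$. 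Exhibiting one suitable quadratic point on each of these settles the attainability half of the statement.

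For the uniform bound I would use Kamienny's extension of Mazur's method. Form the symmetric square $X_{1}(N)^{(2)}$, the natural map $X_{1}(N)^{(2)}\to J_{1}(N)$, $\{P,Q\}\mapsto[P+Q-2\infty]$, and postcompose with the quotient $J_{1}(N)\to J_{e}$ onto the winding quotient; by the analytic nonvanishing of the relevant $L$-values together with the theorem of Kolyvagin--Logach\"ev (and Kato), $J_{e}$ has Mordell--Weil rank $0$ over $\mathbb{Q}$. If the composite $X_{1}(N)^{(2)}\to J_{e}$ is a formal immersion at the image of the cusp $(\infty,\infty)$ modulo a well-chosen small prime $\ell$ of good reduction, then a quadratic point of $X_{1}(N)$ would give a point of $X_{1}(N)^{(2)}(\mathbb{Q})$ mapping to a torsion point of $J_{e}(\mathbb{Q})$, which, since torsion injects into the reduction, forces the quadratic point to meet the cusp modulo $\ell$; running this over several $\ell$ eliminates all such points once $N>13$. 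The formal immersion condition unwinds into the linear independence modulo $\ell$ of the first few Hecke translates of the winding element, i.e.\ a statement about the action of $T_{2},T_{3},\dots$ on $H^{0}(X_{1}(N)_{\mathbb{F}_{\ell}},\Omega^{1})$. The same machinery applied to $X_{1}(p^{2})$, $X_{1}(2^{k})$, $X_{1}(3^{k})$, $X_{0}(p)$ and the relevant fibre products rules out all prime powers past $2^{4}$ and $3^{2}$ and all primes outside $\{2,3,5,7,11,13\}$, leaving only finitely many levels for (ii); there one maps $X_{1}(m,n)$ to an Atkin--Lehner quotient or to a rank-$0$ elliptic curve quotient so that a quadratic point descends to a rational point whose fibre can be enumerated, or invokes Abramovich--Harris/Faltings finiteness followed by an explicit search.

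The hard part is twofold. First, the rank-$0$ inputs: one must know that the winding quotients of $J_{1}(N)$ and the various auxiliary elliptic quotients have rank $0$ over $\mathbb{Q}$, which rests on analytic rank-$0$ computations combined with the Kolyvagin--Kato machinery and is not elementary. Second, the formal immersion verification is a delicate computation with $q$-expansions of Hecke eigenforms modulo small primes, and the auxiliary prime $\ell$ must be chosen so as to work across the whole range of $N$ under consideration --- the small primes such as $N=13$ sit right at the boundary and need separate care. By comparison, the numerous composite-level cases in (ii), while laborious, are conceptually routine once the rank-$0$ quotients and the finiteness of quadratic points on each curve are in hand.
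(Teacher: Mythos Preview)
The paper does not give its own proof of this statement: Theorem~2.2 is quoted as a background result, with the proof entirely outsourced to the citations \cite{20} (Kenku--Momose) and \cite{13} (Kamienny). So there is nothing in the paper to compare your proposal against.

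That said, your sketch is a faithful outline of how the cited works actually proceed. The reduction to degree-$\le 2$ points on $X_1(m,n)$, Kamienny's symmetric-square formal immersion criterion into a rank-zero quotient of the Jacobian to bound the primes, and the subsequent level-by-level elimination of the remaining composite orders (done by Kenku--Momose) is exactly the historical structure of the proof. A couple of small corrections: the rank-zero input in Kamienny's original argument predates Kato and the winding quotient formalism and instead uses Mazur's Eisenstein quotient of $J_0(p)$; and the genus-$1$ curves $X_1(11),X_1(14),X_1(15)$ are elliptic curves of rank $0$ over $\mathbb{Q}$, so they do \emph{not} have infinitely many quadratic points---the corresponding groups $C_{11},C_{14},C_{15}$ occur over only finitely many quadratic fields, which is why they appear in $\Phi(2)$ but not in $\Phi_{\mathbb{Q}}(2)$. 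These are details rather than gaps in the overall strategy.
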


\begin{theorem}[Mazur \cite{12}, Kenku \cite{14}, \cite{15}, \cite{16}, \cite{17}]
\label{Theorem 2.3}
Let $E/\mathbb{Q}$ be an elliptic curve with a rational $n$-isogeny. Then \[ n \in \{ 1,...,19,21, 25,27, 37, 43, 67, 163 \} .\]
There are infinitely many elliptic curves (up to $\bar{\mathbb{Q}}$-isomorphism) with a rational $n$-isogeny over $\mathbb{Q}$ for \[ n \in \{
{1, . . . , 10, 12, 13, 16, 18, 25} \} \] and only finitely many for all the other $n$.
\end{theorem}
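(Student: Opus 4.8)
The plan is to translate the statement into a question about rational points on the modular curves $X_0(n)$ and then invoke the (deep) classification of those points. First I would record the moduli interpretation: $X_0(n)$ is a smooth projective curve over $\mathbb{Q}$ whose non-cuspidal points, at the level of coarse moduli, parametrize pairs $(E,C)$ with $C$ a cyclic subgroup of $E$ of order $n$. Hence an elliptic curve $E/\mathbb{Q}$ admits a rational cyclic $n$-isogeny $\phi$ exactly when $(E,\ker\phi)$ gives a non-cuspidal point of $X_0(n)(\mathbb{Q})$, and conversely every non-cuspidal rational point of $X_0(n)$ arises from such a curve after possibly replacing $E$ by a quadratic twist, which does not change the $\bar{\mathbb{Q}}$-isomorphism class. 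So the task splits into two questions: for which $n$ does $X_0(n)$ have a non-cuspidal rational point, and for which does it have infinitely many rational points?

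Next I would compute the genus of $X_0(n)$ from the classical formula in terms of the index $[\mathrm{SL}_2(\mathbb{Z}):\Gamma_0(n)]$, the numbers $\nu_2,\nu_3$ of elliptic points of orders $2$ and $3$, and the number of cusps. This isolates the genus-$0$ values $n\in\{1,\dots,10,12,13,16,18,25\}$; since $X_0(n)$ always carries the rational cusp $\infty$, for these $n$ one has $X_0(n)\cong\mathbb{P}^1_{\mathbb{Q}}$, so there are infinitely many rational points, and the forgetful map to the $j$-line then yields infinitely many rational $j$-invariants, hence infinitely many elliptic curves with a rational cyclic $n$-isogeny up to $\bar{\mathbb{Q}}$-isomorphism. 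For every other $n$ the genus is at least $1$, so $X_0(n)(\mathbb{Q})$ is finite --- it is an elliptic curve of Mordell--Weil rank $0$ when the genus equals $1$, and Faltings' theorem applies when the genus is at least $2$ --- and only finitely many such curves exist up to $\bar{\mathbb{Q}}$-isomorphism. It remains to locate the non-cuspidal rational points in the positive-genus range.

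For the genus-$1$ values $n\in\{11,14,15,17,19,20,21,24,27,32,36,49\}$ I would take the cusp $\infty$ as origin, verify directly that the resulting elliptic curve over $\mathbb{Q}$ has rank $0$, list its finitely many rational points, and discard the cusps; this leaves non-cuspidal rational points precisely for $n\in\{11,14,15,17,19,21,27\}$. For genus at least $2$ I would separate the prime case from the composite case. The prime case is Mazur's theorem on rational isogenies of prime degree: analysing the Eisenstein quotient of the Jacobian $J_0(p)$, one shows that its Mordell--Weil group over $\mathbb{Q}$ is finite and generated by the class of the difference of the two cusps, and concludes that $X_0(p)(\mathbb{Q})$ consists of cusps only, unless $p\in\{37,43,67,163\}$, in which case there are a few additional (exceptional) rational points that one pins down by an explicit determination of $X_0(p)(\mathbb{Q})$ for those four primes. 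For composite $n$ of genus at least $2$: a cyclic $n$-isogeny induces, for every prime power $p^{e}\parallel n$, a rational cyclic $p^{e}$-isogeny, since the $p$-primary part of a Galois-stable cyclic group is again Galois-stable and cyclic; hence each such $p^{e}$ must already appear in the list obtained so far, which forces $n$ to divide a fixed explicit integer and so leaves only finitely many candidate $n$. Kenku's case-by-case analysis of the corresponding modular curves $X_0(n)$ then shows that none of the residual composites contributes a new non-cuspidal rational point. Assembling the three ranges yields exactly $n\in\{1,\dots,19,21,25,27,37,43,67,163\}$, with infinitely many curves up to $\bar{\mathbb{Q}}$-isomorphism precisely for the genus-$0$ values $\{1,\dots,10,12,13,16,18,25\}$.

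The decisive obstacle is the prime case: Mazur's Eisenstein-ideal argument is by far the deepest ingredient, and even taking it for granted, the explicit computation of $X_0(p)(\mathbb{Q})$ for $p\in\{37,43,67,163\}$ (via a Chabauty-type analysis) and Kenku's composite bookkeeping are where the genuine work lies. The genus computation, the genus-$0$ parametrizations, and the rank-$0$ verifications for the genus-$1$ curves are routine by comparison.
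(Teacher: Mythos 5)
This theorem is quoted in the paper without proof, as it is exactly the combination of the cited results of Mazur and Kenku; your outline --- reduction to non-cuspidal rational points on $X_0(n)$, the genus-$0$ parametrizations giving the infinite families, the rank-$0$ genus-$1$ levels, Mazur's Eisenstein-ideal argument for prime level with the exceptional primes $37, 43, 67, 163$, and Kenku's case-by-case treatment of the remaining composite levels --- is precisely the strategy of those references. Since the genuinely deep steps are correctly identified and deferred to those same sources, your proposal matches the intended justification and needs no correction.
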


\begin{theorem} \cite[Theorem 2]{7}
\label{Theorem 2.4}
Let $E/\mathbb{Q}$ be an elliptic curve and $F$ a quadratic field. Then 
\[ E(F)_{tors} \cong
\begin{cases} 
      C_m, & m=1,...,10,12,15,16, \\
      C_{2} \oplus C_{2m}, & m=1,...,6,\\
      C_3 \oplus C_{3m}, & m=1,2, \\
      C_4 \oplus C_4.
   \end{cases} \]
\end{theorem}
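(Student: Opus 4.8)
The plan is to bootstrap from the unconditional classification of $\Phi(2)$ in Theorem~\ref{Theorem 2.2}: since $E(F)_{tors}$ is always one of the groups listed there, the theorem reduces to two tasks. First, show that for $E$ defined over $\mathbb{Q}$ the groups $C_{11}$, $C_{13}$, $C_{14}$, $C_{18}$ --- precisely the members of $\Phi(2)$ missing from the claimed list --- cannot occur as $E(F)_{tors}$. Second, exhibit, for each group in the claimed list, an elliptic curve over $\mathbb{Q}$ and a quadratic field realizing it. The key representation-theoretic input throughout is that if $P$ generates a cyclic subgroup of $E$ and $P$ is $F$-rational, then the character $\phi$ describing the Galois action on $\langle P\rangle$ satisfies $\phi^{2}=1$, so $\phi$ is trivial or equals the quadratic character $\chi$ of $F/\mathbb{Q}$; and that twisting $E$ by $\chi$ replaces $\phi$ by $\chi\phi$ while leaving the $2$-torsion of $E$ unchanged.

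For the first task with prime order $\ell\in\{11,13\}$: let $P\in E(F)$ have order $\ell$ and let $\sigma$ generate $\Gal(F/\mathbb{Q})$. Then $\langle P,\sigma(P)\rangle$ is $\Gal(\bar{\mathbb{Q}}/\mathbb{Q})$-stable and cannot be all of $E[\ell]$, since $E[\ell]\subseteq E(F)$ would force $\mathbb{Q}(\zeta_{\ell})\subseteq F$ by the Weil pairing while $[\mathbb{Q}(\zeta_{\ell}):\mathbb{Q}]=\ell-1>2$. Hence $\langle P\rangle$ is stable, so $E$ has a rational $\ell$-isogeny, and $\phi$ (as above) is trivial or $\chi$. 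After replacing $E$ by $E^{\chi}$ --- isomorphic to $E$ over $F$, with associated character $\chi\phi$ --- we may assume $\phi$ is trivial, so $E(\mathbb{Q})$ contains a point of order $\ell$; this contradicts Theorem~\ref{Theorem 2.1} as $11,13\notin\{1,\dots,10,12\}$.

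For the first task with $n\in\{14,18\}$, write $n=2q$ with $q\in\{7,9\}$, so a point of order $n$ in $E(F)$ produces $P_{q},P_{2}\in E(F)$ of orders $q$ and $2$. As before $\langle P_{q},\sigma(P_{q})\rangle$ is $\Gal(\bar{\mathbb{Q}}/\mathbb{Q})$-stable and is not all of $E[q]$; for $q=9$ it is also not isomorphic to $C_{3}\oplus C_{9}$, since that group does not lie in $\Phi(2)$ (Theorem~\ref{Theorem 2.2}). Thus $\langle P_{q}\rangle$ is stable, $E$ has a rational $q$-isogeny, and after twisting by $\chi$ --- which does not affect rational $2$-torsion --- we may assume $E(\mathbb{Q})$ has a point of order $q$. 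By Theorem~\ref{Theorem 2.1} this forces $E(\mathbb{Q})_{tors}\cong C_{q}$, because $C_{14},C_{18}\notin\Phi(1)$; in particular $E$ has no rational $2$-torsion point, so its (cubic) $2$-division polynomial has no rational root and therefore no root in any quadratic field, since a cubic over $\mathbb{Q}$ with a root in a quadratic field must have a rational root. This contradicts $P_{2}\in E(F)$.

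For the second task, the groups $C_{m}$ with $1\le m\le 10$ or $m=12$ and $C_{2}\oplus C_{2m}$ with $1\le m\le 4$ occur already over $\mathbb{Q}$ by Theorem~\ref{Theorem 2.1}, and base changing such a curve to a quadratic field avoiding the finitely many quadratic fields generated by its torsion points leaves the torsion unchanged; the groups $C_{2}\oplus C_{10}$, $C_{2}\oplus C_{12}$, $C_{3}\oplus C_{3}$, $C_{3}\oplus C_{6}$, $C_{4}\oplus C_{4}$, $C_{16}$ are obtained by base changing curves carrying the appropriate $2$-, $3$-, or $4$-power isogeny (or torsion) structure over $\mathbb{Q}$ to a suitable quadratic field; and $C_{15}$ is realized by an explicit curve, e.g.\ a curve of conductor $50$ over $\mathbb{Q}(\sqrt{5})$. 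I expect this last case to be the main obstacle: since $15$ is absent from the list in Theorem~\ref{Theorem 2.3}, no $E/\mathbb{Q}$ admits a rational $15$-isogeny, so an order-$15$ point over a quadratic field cannot be produced from a rational isogeny and must be found by hand --- concretely, from a quadratic point on the modular curve $X_{1}(15)$, which has genus $1$ and only finitely many rational points but does carry the required quadratic points.
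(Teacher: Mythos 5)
The paper itself offers no proof of this statement: it is imported wholesale from Najman \cite[Theorem 2]{7}, so there is no internal argument to compare with, and your proposal is in effect a reconstruction of the proof of the cited result. Its core is correct and follows the standard route: starting from the unconditional list $\Phi(2)$ of Theorem \ref{Theorem 2.2}, the only groups to exclude for curves defined over $\mathbb{Q}$ are $C_{11}, C_{13}, C_{14}, C_{18}$, and your eliminations work. The subgroup $\langle P,\sigma(P)\rangle$ is indeed Galois-stable; it cannot be all of $E[\ell]$ (respectively $E[9]$) by the Weil pairing, and for $q=9$ the intermediate possibility $C_3\oplus C_9$ is correctly ruled out because no group in $\Phi(2)$ contains it. Hence $\langle P\rangle$ is stable, the character on it factors through $\Gal(F/\mathbb{Q})$, and after the quadratic twist Mazur's theorem (Theorem \ref{Theorem 2.1}) applies. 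The $2$-torsion step for $C_{14}$ and $C_{18}$ is also sound: twisting does not change $\mathbb{Q}(E[2])$, and a rational cubic with a root in a quadratic field must have a rational root. This is exactly the kind of twisting argument the present paper uses elsewhere (Lemma \ref{Lemma 3.2} and its applications), so in spirit your approach matches both the paper and Najman's original proof.

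Two caveats. First, a concrete factual error: $15$ \emph{does} appear in the list of Theorem \ref{Theorem 2.3}, i.e.\ elliptic curves over $\mathbb{Q}$ with a rational $15$-isogeny exist (finitely many $j$-invariants; the paper's own Lemma \ref{Lemma 6.1} lists all four), so your claim that no $E/\mathbb{Q}$ admits a rational $15$-isogeny is false. It does not break your argument, since you used it only as motivation, and your proposed witness (a conductor-$50$ curve acquiring a point of order $15$ over $\mathbb{Q}(\sqrt{5})$) is in fact correct — the same curve is used in Lemma \ref{Lemma 6.1}. Second, the realization half — that every group on the list actually occurs as $E(F)_{tors}$ with $E/\mathbb{Q}$ and $F$ quadratic — is asserted rather than proved ("suitable quadratic field", "appropriate structure"); a complete proof requires exhibiting explicit curves and fields for $C_{15}$, $C_{16}$, $C_2\oplus C_{10}$, $C_2\oplus C_{12}$, $C_3\oplus C_{3m}$ and $C_4\oplus C_4$, as Najman does. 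For the way Theorem \ref{Theorem 2.4} is used in this paper only the exclusion direction matters, and that part of your proposal is complete.
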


\begin{theorem} \cite[Theorem 1]{7}
\label{Theorem 2.5}
Let $E/\mathbb{Q}$ be an elliptic curve and $K$ a cubic field. Then 
\[ E(K)_{tors} \cong
\begin{cases} 
      C_m, & m=1,...,10, 12, 13, 14, 18, 21, \\
      C_{2} \oplus C_{2m}, & m=1,...,4,7.
   \end{cases} \]

\end{theorem}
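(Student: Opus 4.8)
The plan is to compute the $p$-primary part of $E(K)_{tors}$ one prime at a time and then reassemble the global structure, exploiting throughout that $E$ is already defined over $\mathbb{Q}$ and that $[K:\mathbb{Q}]=3$ is odd. First I would record the constraint forced by the Weil pairing: if $C_m \oplus C_m \subseteq E(K)$ then $\mathbb{Q}(\zeta_m) \subseteq K$, and since $[\mathbb{Q}(\zeta_m):\mathbb{Q}]$ is even for $m \ge 3$ while $[K:\mathbb{Q}]=3$ is odd, this is impossible. Hence every occurring group is of the form $C_n$ or $C_2 \oplus C_{2n}$, the $2$-part has rank at most $2$ and contains full $2$-torsion only. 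It therefore suffices to bound the cyclic factors.

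The second step is to pin down which prime powers $p^k$ can divide $\abs{E(K)_{tors}}$. A point $P$ of order $p^k$ in $E(K)$ has $[\mathbb{Q}(P):\mathbb{Q}]$ dividing $3$, i.e.\ degree $1$ or $3$; equivalently $(E,P)$ gives a non-cuspidal point of degree at most $3$ on $X_1(p^k)$ lying over a rational $j$-invariant. The degree-$1$ case is exactly Mazur's Theorem \ref{Theorem 2.1}. For degree $3$ I would study $G := \mathrm{im}(\rho_{E,p^k}) \subseteq \Gl_2(\mathbb{Z}/p^k\mathbb{Z})$: the degree of $P$ is the length of the $G$-orbit of the corresponding vector, so an orbit of length $3$ forces the cyclic subgroup $\langle P \rangle$ to have $G$-orbit of length $1$ or $3$. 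In the first case $E$ acquires a rational $p$-isogeny, and in the second $(E,\langle P\rangle)$ is a cubic point on $X_0(p)$ above a rational $j$-invariant. Feeding both possibilities into Mazur's isogeny classification, Theorem \ref{Theorem 2.3}, together with the known low-degree points on $X_0(p)$, cuts the admissible primes down to $\{2,3,5,7,13\}$ and bounds the exponents.

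The final and hardest step is to determine, for each surviving prime, the exact maximal power and the allowed combinations, separating the cases that genuinely occur from those that must be excluded. The positive direction is handled by exhibiting explicit curves $E/\mathbb{Q}$ together with cubic fields $K$ realizing each listed group. The negative direction is the real obstacle: one must show that $C_{15}, C_{16}, C_{25}, C_{27}, C_2 \oplus C_{10}$ and $C_2 \oplus C_{12}$ never arise, while $C_{13}, C_{14}, C_{18}, C_{21}$ and $C_2 \oplus C_{14}$ do. This requires a careful analysis of the relevant modular curves $X_1(N)$ (and their analogues parametrizing full $2$-torsion): computing their points of degree at most $3$ that lie above rational $j$, using gonality bounds to rule out the existence of infinitely many such points for the forbidden levels, and explicit equation or Jacobian-rank arguments to eliminate the remaining sporadic candidates. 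I expect this modular-curve bookkeeping, rather than the representation-theoretic setup of the second step, to be where essentially all the difficulty lies.
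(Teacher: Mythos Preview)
The paper does not prove Theorem~\ref{Theorem 2.5} at all: it is quoted verbatim as a known result from Najman's paper \cite{7} and is used only as input for the later arguments. So there is no ``paper's own proof'' to compare your proposal against.

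That said, your outline is a reasonable high-level sketch of how Najman actually establishes the result in \cite{7}: the Weil-pairing reduction to groups of the form $C_n$ or $C_2\oplus C_{2n}$, the reduction to controlling prime-power orders via degrees of points on $X_1(p^k)$ lying over rational $j$, and then a case-by-case elimination of the forbidden levels by explicit modular-curve computations. You are also right that the last step carries essentially all the weight. One caution: in your second step you write that an orbit of length $3$ forces the orbit of $\langle P\rangle$ to have length $1$ or $3$, and in the latter case you jump to ``cubic point on $X_0(p)$ above a rational $j$-invariant''; but a rational $j$-invariant together with a degree-$3$ orbit of cyclic subgroups does not by itself give a cubic point on $X_0(p)$ (the three conjugate subgroups could map to three distinct rational points on $X_0(p)$, or the residue field could fail to be a field at all if the orbit is not transitive under the full Galois group). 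In practice Najman bypasses this by working directly with the possible images $G_E(p)$ classified by Zywina/Sutherland and reading off the possible degrees $[\mathbb{Q}(P):\mathbb{Q}]$ from those tables, rather than via an abstract orbit-length dichotomy.
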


\begin{theorem} \cite{2,22}
\label{Theorem 2.6}
Let $E/\mathbb{Q}$ be an elliptic curve and $K$ a quartic field. Then

\[ E(K)_{tors} \cong
\begin{cases} 
      C_m, & m=1,...,10, 12, 13, 15, 16, 20, 24,\\
      C_{2} \oplus C_{2m}, & m=1,...,6, 8,\\
      C_3 \oplus C_{3m}, & m=1, 2, \\
      C_4 \oplus C_{4m}, & m=1,2, \\
      C_5 \oplus C_5, \\
      C_6 \oplus C_{6}.
   \end{cases} \]
\end{theorem}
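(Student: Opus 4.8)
The plan is to follow the modular‑curve and Galois‑image method of \cite{2,22}. Write $T := E(K)_{tors} \cong C_m \oplus C_n$ with $m \mid n$. Since $m \mid n$ we have $E[m] \subseteq E(K)$, so the Weil pairing gives $\mathbb{Q}(\zeta_m) \subseteq \mathbb{Q}(E[m]) \subseteq K$; as a subfield of a quartic field has degree $1$, $2$ or $4$, this forces $\phi(m) \in \{1,2,4\}$ and hence $m \in \{1,2,3,4,5,6,8,10,12\}$. For each prime $\ell$ I would treat separately the largest power $\ell^{k}$ dividing $n$ and, when $m>1$, the extra structure coming from the mod‑$m$ representation. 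The guiding principle is that a point $P$ of order $\ell^{k}$ in $E(K)$ generates a subgroup $\langle P\rangle$ whose $G_{\mathbb{Q}}$‑orbit has size dividing $[K:\mathbb{Q}]=4$, and $P$ itself is defined over a subfield of $K$, i.e. over a field of degree $1$, $2$ or $4$ over $\mathbb{Q}$; so everything is governed by points of degree $\le 4$ on $X_{1}(\ell^{k})$ (and $X_{0}(\ell^{k})$) lying over rational $j$‑invariants.

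First I would pin down which primes $\ell$ can divide $|T|$: by the determination of $R_{\mathbb{Q}}(4)$ in \cite{2}, a rational elliptic curve can have a point of prime order $\ell$ over a quartic field only for $\ell \in \{2,3,5,7,13\}$, the larger primes being excluded by the genus/gonality of $X_{1}(\ell)$ together with the isogeny bound of Theorem~\ref{Theorem 2.3}. Next, for each such $\ell$, I would bound the $\ell$‑primary part. The small powers are realized by exhibiting infinite families; the borderline powers ($C_{25}$, $C_{27}$, $C_{32}$, $C_{49}$, and the like) are eliminated as follows: either $\langle P\rangle$ is $G_{\mathbb{Q}}$‑stable, giving a rational $\ell^{k}$‑isogeny to which Theorem~\ref{Theorem 2.3} applies and on whose cyclic kernel $G_{\mathbb{Q}}$ acts through a character $\chi\colon G_{\mathbb{Q}}\to(\mathbb{Z}/\ell^{k}\mathbb{Z})^{\times}$ — whence $[\mathbb{Q}(P):\mathbb{Q}]$ equals the order of $\chi$, which must divide $\phi(\ell^{k})$ and be $\le 4$ — or the orbit of $\langle P\rangle$ is a degree $2$, $3$ or $4$ point on $X_{0}(\ell^{k})$, and one rules this out for $\ell^{k}$ large.

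When $m\ge 2$ I would pass to the modular curves $X_{1}(m,n)$ parametrising the torsion structure $C_{m}\oplus C_{n}$ with a fixed Weil‑pairing value, translating ``$C_{m}\oplus C_{n}\subseteq E(K)$ for some quartic $K$ and some $E/\mathbb{Q}$'' into the existence of a point of degree $\le 4$ on $X_{1}(m,n)$ above a rational point of $X(1)$ — with the usual bookkeeping that $G_{\mathbb{Q}}$ may permute a chosen basis, so that one may have to allow a quadratic twist. Then for each candidate in the target list ($C_{2}\oplus C_{2m}$ with $m\le 6$, $C_{3}\oplus C_{3m}$ with $m\le 2$, $C_{4}\oplus C_{4m}$ with $m\le 2$, $C_{5}\oplus C_{5}$, $C_{6}\oplus C_{6}$) I would produce an explicit infinite family or example over a quartic field, and for each candidate lying just outside it (such as $C_{2}\oplus C_{14}$, $C_{18}$, $C_{21}$, $C_{3}\oplus C_{9}$, $C_{4}\oplus C_{12}$, $C_{5}\oplus C_{10}$, $C_{6}\oplus C_{12}$, $C_{7}\oplus C_{7}$) I would show the relevant modular curve has no point of degree $\le 4$ over a rational $j$‑invariant. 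Finally I would assemble everything: intersect the $\ell$‑primary bounds with the constraint $m\mid n$ and the Weil‑pairing restriction to obtain the finite list of a priori possibilities, then match each entry against the realizations and exclusions above.

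The hard part is precisely this last elimination, together with the borderline cyclic cases: for the handful of modular curves of genus $\ge 2$ sitting at the boundary — $X_{1}(21)$, $X_{1}(25)$, $X_{1}(27)$, $X_{1}(32)$, and the mixed curves $X_{1}(2,14)$, $X_{1}(3,9)$, $X_{1}(4,12)$, $X_{1}(5,10)$, $X_{1}(6,12)$, $X_{1}(7,7)$ — one must prove there is no point of degree $\le 4$ with rational $j$‑invariant. This requires either an explicit determination of all low‑degree points (Chabauty and the Mordell–Weil sieve on the curve or on a well‑chosen quotient, or symmetric‑power and formal‑immersion arguments in the style of Kamienny), or sharp gonality lower bounds over $\mathbb{Q}$ (Abramovich‑type spectral estimates, arithmetically refined), and it is here that essentially all of the technical effort, and the reliance on machine computation, is concentrated.
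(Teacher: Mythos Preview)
This theorem is not proved in the paper at all: it is stated in Section~2 as a background result and attributed to \cite{2,22} (Chou, and Gonz\'alez-Jim\'enez--Najman), alongside the other auxiliary Theorems~\ref{Theorem 2.1}--\ref{Theorem 2.8}. The paper simply invokes it later (in the proof of the $[K:\mathbb{Q}]=2$ case) to know that $C_{11}, C_{14}, C_{18}\notin\Phi_{\mathbb{Q}}(4)$. So there is no ``paper's own proof'' for your attempt to be compared against.

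That said, your sketch is a faithful high-level outline of the strategy actually used in \cite{22} and \cite{2}: bound $m$ via the Weil pairing, determine $R_{\mathbb{Q}}(4)$, bound each $\ell$-primary part by combining the rational isogeny classification with degree-of-definition constraints on $\mathbb{Q}(P)$, and then eliminate the borderline cases by studying low-degree points on the relevant $X_1(m,n)$. One point worth flagging is that Chou's paper \cite{22} only handles \emph{Galois} quartic $K$, and the extension to arbitrary quartic fields is completed in \cite{2}; your proposal does not distinguish these, and the non-Galois case genuinely requires the extra input from \cite{2} (the systematic tables of $[\mathbb{Q}(P):\mathbb{Q}]$ for each possible image $G_{E}(\ell)$). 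Also, your list of borderline curves to eliminate is slightly off for this specific theorem: for instance $C_{2}\oplus C_{16}$ must be realized (it is in the list, with $m=8$), while $C_{2}\oplus C_{14}$ need not be excluded since $C_{14}$ itself is already absent from the cyclic list. But these are minor calibration issues in an otherwise accurate summary of the cited works.
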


\begin{theorem} \cite{38}
\label{Theorem 2.7}
Let $E/\mathbb{Q}$ be an elliptic curve and $K$ a quintic field. Then
\[ E(K)_{tors} \cong
\begin{cases} 
      C_m, & m=1,...,12, 25, \\
      C_{2} \oplus C_{2m}, & m=1,...,4.
   \end{cases} \]
\end{theorem}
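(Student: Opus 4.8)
The plan is to determine $E(K)_{tors}$ one prime at a time and then assemble the admissible global structures. Write $E(K)_{tors} \cong C_m \oplus C_n$ with $m \mid n$. Two structural constraints come first. By the Weil pairing, $C_m \oplus C_m \subseteq E(K)$ forces $\mathbb{Q}(\zeta_m) \subseteq K$, hence $\varphi(m) \mid [K:\mathbb{Q}] = 5$; since $\varphi(m)$ is even for $m \ge 3$, only $m \le 2$ survives, so $E(K)_{tors}$ is cyclic or of the form $C_2 \oplus C_{2n}$, already matching the shape of the asserted list. Secondly, because $[K:\mathbb{Q}] = 5$ is prime, every torsion point $P \in E(K)$ satisfies $[\mathbb{Q}(P):\mathbb{Q}] \mid 5$, i.e. its $G_{\mathbb{Q}}$-orbit has size $1$ or $5$. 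Everything then reduces to bounding, for each prime $\ell$, the largest power $\ell^k$ that occurs as the order of a point whose orbit has size $1$ or $5$.

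Next I would split the primes according to whether $5$ divides $|\mathrm{GL}_2(\mathbb{Z}/\ell^k\mathbb{Z})|$. If it does not, then the orbit of a point of order $\ell^k$ cannot have size $5$ (orbit size divides $|\mathrm{im}\,\rho_{E,\ell^k}|$), so such a point is already rational; hence the $\ell$-primary part of $E(K)_{tors}$ equals that of $E(\mathbb{Q})_{tors}$ and is controlled by Mazur's theorem (Theorem~\ref{Theorem 2.1}). Since $|\mathrm{GL}_2(\mathbb{F}_\ell)| = \ell(\ell-1)^2(\ell+1)$, this \emph{non-growing} case is exactly $\ell \not\equiv 0, \pm 1 \pmod 5$, and it yields at most $C_8$ (resp.\ $C_2 \oplus C_8$) at $\ell = 2$, at most $C_9$ at $\ell = 3$ (no $C_3 \oplus C_3$, as $\zeta_3 \notin K$), at most $C_7$ at $\ell = 7$, and excludes $13$ and all larger non-growing primes outright --- in particular this explains the absence of $C_{13}$. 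The remaining \emph{growing} primes are $\ell \equiv 0, \pm 1 \pmod 5$, namely $5, 11, 19, 29, 31, \dots$.

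For the growing primes I would argue via the mod-$\ell$ image $G_\ell := \rho_{E,\ell}(G_{\mathbb{Q}}) \le \mathrm{GL}_2(\mathbb{F}_\ell)$, using that $\det G_\ell = (\mathbb{Z}/\ell\mathbb{Z})^\times$ is surjective. An orbit of size $5$ forces $5 \mid |G_\ell|$; since a group with an orbit this small on the $\ell^2-1$ nonzero points is a proper subgroup, Dickson's classification puts $G_\ell$ inside a Borel, a Cartan normalizer, or an exceptional subgroup. At $\ell = 5$ and $\ell = 11$ these possibilities, combined with the isogeny bound (Theorem~\ref{Theorem 2.3}) and explicit curves, realize a point of order $11$ (so $C_{11}$, e.g.\ over the quintic field $\mathbb{Q}(\zeta_{11})^{+}$) and, for $\ell = 5$, a point of order $25$ (so $C_{25}$), while $C_{125}, C_{121}, C_5 \oplus C_5$ and $C_{11} \oplus C_{11}$ are eliminated by the Weil-pairing bound and a residual orbit computation. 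For $\ell \ge 19$ the non-split Cartan acts freely on the nonzero points of $E[\ell]$, so an orbit of size $5$ would force its contribution to $G_\ell$ to have order $5$, whence $\det G_\ell$ has order at most $5 < \ell - 1$, contradicting surjectivity; the split and Borel cases give a Galois-stable line and hence a rational $\ell$-isogeny, excluded by Theorem~\ref{Theorem 2.3} for $\ell \ne 19$ and by a character-order argument ($5$ does not divide $\ell - 1 = 18$) for $\ell = 19$; the Cartan normalizers fall to the same determinant count, and the exceptional ($A_5$) case to the known classification of mod-$\ell$ images. Hence only $2, 3, 5, 7, 11$ divide $|E(K)_{tors}|$.

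Finally I would assemble the admissible groups and prove sharpness. Because the $2$-, $3$- and $7$-primary parts do not grow, Mazur's theorem forces $E(\mathbb{Q})_{tors} = C_7$ whenever a point of order $7$ is present, and directly excludes $C_{14}, C_{16}, C_{18}, C_{21}$ and $C_{49}$ (each would require $2$-, $3$- or $7$-power torsion already over $\mathbb{Q}$ violating Theorem~\ref{Theorem 2.1}). The order-$11$ points over a quintic field arise from curves admitting a rational $11$-isogeny, so extra coprime torsion would create a rational isogeny of degree $22, 55$ or $77$, none occurring by Theorem~\ref{Theorem 2.3}; similarly $C_{35}$ is ruled out once the order-$5$ point yields a rational $5$-isogeny. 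Assembling the per-prime bounds leaves exactly $C_m$ for $m \in \{1, \dots, 12, 25\}$ and $C_2 \oplus C_{2m}$ for $m \in \{1, \dots, 4\}$, and sharpness I would establish by exhibiting for each listed group an $E/\mathbb{Q}$ and a quintic $K$ realizing it (base-changing curves with the requisite rational torsion, and for $C_{11}, C_{25}$ using curves with a rational $11$- or $25$-isogeny whose kernel acquires a point over a quintic subfield of the isogeny field). The hardest part is the growing-prime step for $\ell \ge 19$ together with the last mixed cases: the determinant/free-action argument disposes of the Cartan configurations cleanly, but the Borel sub-case at $\ell = 19$, the exceptional $A_5$ image, and the combinations not immediately killed by Theorem~\ref{Theorem 2.3} (notably $C_{15}$ and the simultaneous occurrence of two growing primes) require the full classification of mod-$\ell$ Galois images of rational elliptic curves and explicit modular-curve computations.
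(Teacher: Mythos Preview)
The paper does not prove this statement: Theorem~\ref{Theorem 2.7} is quoted without proof from Gonz\'alez-Jim\'enez \cite{38} as one of the auxiliary results in Section~2, and is used only as input (to identify $\Phi_{\mathbb{Q}}(5)$ in the later arguments). There is therefore no proof in the present paper to compare your proposal against.

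Taken on its own, your outline is architecturally sound---the Weil-pairing bound forcing $m\le 2$, the ``non-growing'' observation (orbit size divides $5$, so if $5\nmid|\Gl_2(\mathbb{Z}/\ell^k\mathbb{Z})|$ the $\ell$-part already lives over $\mathbb{Q}$), and the Dickson case split for the remaining primes are the right ingredients, and this is close in spirit to how \cite{38} proceeds. The soft spots are in the mixed-torsion assembly involving the prime $5$. Your isogeny-degree argument for $C_{35}$ and for the $11$-mixed cases presupposes that a degree-$5$ point of order $5$ (resp.\ $11$) forces a rational $5$- (resp.\ $11$-) isogeny; for $11$ this does follow from the classification of mod-$11$ images over $\mathbb{Q}$, but for $5$ it is false in general---a $G_{\mathbb{Q}}$-orbit of size $5$ inside $E[5]\setminus\{0\}$ need not be contained in a single line. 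Consequently $C_{15}$, $C_{20}$, $C_{50}$, $C_{75}$ and $C_2\oplus C_{10}$ are not disposed of by the mechanism you describe. You flag $C_{15}$ as hard; the others need the same finer analysis of the admissible mod-$5$ images combined with isogeny constraints, and in \cite{38} that is where most of the actual work is concentrated rather than in the large-prime Cartan/Borel dichotomy.
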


\begin{theorem} \cite{41}
\label{Theorem 2.8}
Let $E/\mathbb{Q}$ be an elliptic curve and let $K$ be a sextic number field. Then
\[ E(K)_{tors} \cong
\begin{cases} 
      C_m, & m=1,...,16, 18, 21, 30, m \ne 11,\\
      C_{2} \oplus C_{2m}, & m=1,...,7, 9,\\
      C_3 \oplus C_{3m}, & m=1,...,4, \\
      C_4 \oplus C_{4m}, & m=1,3, \\
      C_6 \oplus C_6, \\
      C_3 \oplus C_{18}.
   \end{cases} \]
   
Additionally, if $G_{E}(2) \neq 2B$, then $E(K)$ is never isomorphic to $C_3 \oplus C_{18}$.
\end{theorem}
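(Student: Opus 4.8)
\emph{Proof strategy.} The result is quoted from \cite{41}; here I only outline the approach one takes to prove it. Since $E(K)_{tors}$ is the direct sum of its $\ell$-primary parts $E(K)[\ell^{\infty}]$, the plan is to pin down each $E(K)[\ell^{\infty}]$ separately and then decide which combinations can occur for a single curve $E/\mathbb{Q}$ inside a single sextic field. The first step is to limit the primes that can appear: one shows that $\#E(K)_{tors}$ is divisible only by $2,3,5,7,13$, i.e. that $R_{\mathbb{Q}}(6)=\{2,3,5,7,13\}$. Since $1,2,3\mid 6$, the inclusion $\{2,3,5,7,13\}\subseteq R_{\mathbb{Q}}(6)$ follows from Theorems \ref{Theorem 2.1}, \ref{Theorem 2.4} and \ref{Theorem 2.5}; for the reverse inclusion (the exclusion of $11,17,19$ and all larger primes, $11$ being the subtle case since it already lies in $R_{\mathbb{Q}}(5)$) one uses the computation of González-Jiménez and Najman in \cite{2}, which rests on analysing closed points of degree dividing $6$ lying over a rational $j$-invariant on the modular curves $X_1(\ell)$ (and $X_0(\ell)$, for the isogeny statements, via Theorem \ref{Theorem 2.3}) together with rank computations on the relevant Jacobians.

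Next, for each $\ell\in\{2,3,5,7,13\}$ one determines the finite list of groups that can occur as $E(K)[\ell^{\infty}]$. The Weil pairing gives the basic constraint: if $C_{\ell^{a}}\oplus C_{\ell^{b}}\subseteq E(K)$ with $a\le b$ then $\mathbb{Q}(\zeta_{\ell^{a}})\subseteq K$, so $\varphi(\ell^{a})\mid 6$; hence $a=0$ for $\ell\in\{5,7,13\}$, while $a\le 1$ for $\ell=3$ and $a\le 2$ for $\ell=2$. The size of the cyclic part is governed by the mod-$\ell^{k}$ representation: a $K$-rational point of order $\ell^{k}$ forces the image of $\rho_{E,\ell^{k}}$ to fix a vector up to a subgroup of index dividing $6$, equivalently it produces a closed point of degree dividing $6$ on $X_1(\ell^{k})$ over the rational point $j(E)$. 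For the small $\ell^{k}$ with $X_1(\ell^{k})$ of genus $0$ (or otherwise carrying infinitely many points of the relevant degree) the admissible values are read off from the known sets $\Phi_{\mathbb{Q}}(d)$ with $d\mid 6$ (Theorems \ref{Theorem 2.4} and \ref{Theorem 2.5}) together with a direct treatment in degree exactly $6$; for the remaining higher-genus curves $X_1(\ell^{k})$ — such as $\ell^{k}=16,25,27,49$ — one rules out points of degree dividing $6$ over a rational $j$-invariant using the classification of rational points on these modular curves and the ranks of their Jacobians. This is the technical heart of the argument and the point I expect to be the main obstacle: the rest is essentially bookkeeping, whereas controlling the low-degree points on the genus $\ge 1$ modular curves needs the deepest input.

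With the local lists in hand, one assembles the candidate global torsion groups. Not every product of admissible $\ell$-parts is realizable: if $C_m\oplus C_n\subseteq E(K)$ then $\mathbb{Q}(\zeta_m)\subseteq K$, and more generally the fields generated by torsion points of different orders must all fit inside a single sextic field; this, together with entanglement constraints among the mod-$\ell$ images of $E$, eliminates several combinations and leaves precisely the groups displayed in the theorem. Each of the surviving groups is then shown to occur by exhibiting an explicit elliptic curve over $\mathbb{Q}$ and an explicit sextic number field, found by a computer search.

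Finally, the supplementary assertion about $C_3\oplus C_{18}$ is obtained by examining that case in detail. Writing $C_3\oplus C_{18}\cong C_2\oplus(C_3\oplus C_9)$, the hypothesis $C_3\oplus C_9\subseteq E(K)$ with $[K:\mathbb{Q}]=6$ forces $\mathbb{Q}(E[3])\subseteq K$ and severely restricts the $3$-adic image of $E$; imposing in addition a $K$-rational point of order $2$ constrains the $2$-adic image, and a direct compatibility check shows that the two requirements can be met simultaneously only when $G_E(2)=2B$, i.e. when $E$ has a single rational point of order $2$. This yields the stated dichotomy and completes the argument.
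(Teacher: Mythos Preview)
The paper does not contain a proof of this statement: Theorem~\ref{Theorem 2.8} is quoted verbatim from \cite{41} as an auxiliary result and is used as a black box later (in Lemma~\ref{Lemma 3.9}, Section~7, and the $C_2\oplus C_{18}$ case of Section~7), so there is no argument here to compare your proposal against. You already recognize this at the outset, and your outline is a plausible sketch of the strategy one expects in \cite{41}; but since the present paper offers no proof, the only accurate assessment is that your write-up goes well beyond what the paper itself provides, and any detailed comparison would have to be made against \cite{41} rather than against this paper.
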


\subsection*{Galois representations attached to elliptic curves}
Let $E/\mathbb{Q}$ be an elliptic curve and let $n$ a positive integer. By $E[n]$ we shall denote the $n$-torsion subgroup of $E(\overline{\mathbb{Q}})$. The field $\mathbb{Q}(E[n])$ is the number field obtained by adjoining to $\mathbb{Q}$ all the $x$ and $y$-coordinates of the points of $E[n]$. The absolute Galois group $Gal(\overline{\mathbb{Q}}/\mathbb{Q})$ acts on $E[n]$ by its action on the coordinates of the
points, inducing a mod $n$ Galois representation attached to $E$:
\[ \rho_{E,n}: \Gal(\overline{\mathbb{Q}}/\mathbb{Q}) \to \Aut(E[n]) .\]
After we fix a base for the $n$-torsion, we can identify $\Aut(E[n])$ with $\Gl_{2}(\mathbb{Z}/n\mathbb{Z})$.
This means that we can consider $\rho_{E,n}(\Gal(\overline{\mathbb{Q}}/\mathbb{Q}))$ as a subgroup of $\Gl_{2}(\mathbb{Z}/n\mathbb{Z})$, uniquely determined up to conjugacy. We shall denote $\rho_{E,n}(\Gal(\overline{\mathbb{Q}}/\mathbb{Q}))$ by $G_{E}(n)$. Moreover, since $\mathbb{Q}(E[n])$ is a Galois extension of $\mathbb{Q}$ and $\ker \rho_{E,n}= \Gal(\overline{\mathbb{Q}}/\mathbb{Q}(E[n]))$, by the first isomorphism theorem we have $G_{E}(n) \cong \Gal(\mathbb{Q}(E[n])/\mathbb{Q})$.
\\
Rouse and Zureick-Brown \cite{31} have classified all the possible $2$-adic images of $\rho_{E,2^{\infty}}: \Gal(\overline{\mathbb{Q}}/\mathbb{Q}) \to \Gl_{2}(\mathbb{Z}_2)$, and have given explicitly all the $1208$ possibe images. We will use the same notation as in \cite{31} for the $2$-adic image of a given elliptic curve $E/\mathbb{Q}$. In \cite{5}, Gonz\'alez-Jim\'enez and Lozano-Robledo have determined for each possible image the degree of the field of definition of any $2$-subgroup.
From the results of \cite{36} one can see if a given $2$-subgroup is defined over a number field of given degree $d$. 

\subsection*{Division polynomial method}
 $E/\mathbb{Q}$ be an elliptic curve and $n$ a positive integer. We denote by $\psi_{E,n}$ the $n$-th division polynomial of $E$ (see \cite[Section 3.2]{33}). If $n$ is odd, then the roots of $\psi_{E, n}$ are precisely the $x$-coordinates of points $P \in E[n]$. Similarly, if $n$ is even, then the roots of $\psi_{E, n}/\psi_{E,2}$ are precisely the $x$-coordinates of points $P \in E[n]\setminus E[2]$. Let $f_{E,n}$ denote the corresponding primitive $n$-division polynomial associated to $E$, i.e. it's roots are the $x$-coordinates of points $P$ on $E(\overline{\mathbb{Q}})$ of exact order $n$. We briefly describe the construction of $f_{E,n}$. If $n=p$ is prime, then $f_{E,p}=\psi_{E,p}$. For an arbitrary $n$, we have \[  f_{E,n}:=\frac{\psi_{E,n}}{\displaystyle \prod_{d|n, d \ne n}f_{E,d}}.\] Note that if $E^{d}/\mathbb{Q}$ is a quadratic twist of $E/\mathbb{Q}$, then $\psi_{E,n}=\alpha \psi_{E^{d},n}$ and $f_{E,n}=\beta f_{E^d,n}$, for some rational constants $\alpha$, $\beta$. Consider the following problem:
\\
\\
\textit{Given a rational number $j$ and K a number field of degree $d$, does there exist an elliptic curve $E/\mathbb{Q}$ such that $j=j(E)$ can $E(K)$ contains a point $P$ of exact order $n$?}
\\
\\
 Let $E_{0}/\mathbb{Q}$ be any elliptic curve with $j=j(E_{0})$. In \texttt{Magma} \cite{35}, we compute the primitive division polynomial $f_{E_0,n}$. Since every elliptic curve $E/\mathbb{Q}$ with $j(E)=j$ is a quadratic twist of $E_0$, we have $f_{E_0,n}=\beta f_{E,n}$, for some rational number $\beta$. Next, we factor $f_{E_0,n}$ over $\mathbb{Q}[x]$. Let $d'$ denote the degree of the smallest irreducible factor $f$ of $f_{E_0,n}$ and let $x_0$ be a root of $f$. If $d' >d$, then $[\mathbb{Q}(P):\mathbb{Q}] \ge [\mathbb{Q}(x_0):\mathbb{Q}]=d' > d=[K:\mathbb{Q}]$ and so a point $P$ of exact order $n$ on $E(\overline{\mathbb{Q}})$ can't be defined over $K$.
\\
\\
Specific elliptic curves mentioned in this paper will be referred to by their $\mathrm{LMFDB}$ label and a link to
the corresponding $\mathrm{LMFDB}$ page \cite{18} will be included for the ease of the reader. Conjugacy classes of subgroups
of $\Gl_{2}(\mathbb{Z}/p\mathbb{Z})$ will be referred to by the labels introduced by Sutherland in \cite{19}.

\section{Classification of $\Phi_{j \in \mathbb{Q}}(p)$}

\begin{lemma}
\label{Lemma 3.1}
Let $p \ge 7$ be a prime number. Then $R_{\mathbb{Q}}(2p) = \{ 2, 3, 5, 7 \}$. Moreover, we have $R_{\mathbb{Q}}(10)= \{ 2, 3, 5, 7, 11 \}$ and $R_{\mathbb{Q}}(6)= \{ 2, 3, 5, 7, 13 \}$.
\end{lemma}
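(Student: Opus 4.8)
The plan is to deduce the lemma from the determination of the sets $R_{\mathbb{Q}}(d)$ obtained by Gonz\'alez-Jim\'enez and Najman in \cite{2}, once we recall how that classification is organized. For a prime $\ell$, write $D_\ell$ for the set of all degrees $[\mathbb{Q}(P):\mathbb{Q}]$, as $P$ ranges over points of order $\ell$ on elliptic curves defined over $\mathbb{Q}$. If $E/\mathbb{Q}$ has a point $P$ of order $\ell$ over a number field $L$ of degree $d$, then $\langle P\rangle\subseteq E[\ell]$ is stable under $\Gal(\overline{\mathbb{Q}}/\mathbb{Q}(P))$, so $E$ has a $\mathbb{Q}(P)$-rational $\ell$-isogeny, and $[\mathbb{Q}(P):\mathbb{Q}]$, which divides $d$, equals the cardinality of the $G_E(\ell)$-orbit of the vector attached to $P$. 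When $E$ itself admits a $\mathbb{Q}$-rational $\ell$-isogeny this orbit has size dividing $\ell-1$ for a point on the rational line, and a controlled multiple of the complementary character's order for the points off it; when $E$ has no $\mathbb{Q}$-rational $\ell$-isogeny, $G_E(\ell)$ is not contained in a Borel subgroup and, by Dickson's description of the subgroups of $\Gl_{2}(\mathbb{Z}/\ell\mathbb{Z})$, every nonzero vector has orbit of size at least $2(\ell-1)$. Thus $\ell\in R_{\mathbb{Q}}(d)$ if and only if some element of $D_\ell$ divides $d$, and \cite{2} computes the portion of each $D_\ell$ that matters here.

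I would then record the needed facts about the $D_\ell$. First, $1\in D_\ell$ exactly for $\ell\in\{2,3,5,7\}$ by Theorem~\ref{Theorem 2.1}, so $\{2,3,5,7\}\subseteq R_{\mathbb{Q}}(d)$ for every $d$. Second, $3\in D_{13}$ while $1,2\notin D_{13}$: Theorems~\ref{Theorem 2.1} and \ref{Theorem 2.4} forbid a point of order $13$ over a field of degree $1$ or $2$, and Theorem~\ref{Theorem 2.5} produces one over a cubic field. Third, $5\in D_{11}$ while $1,2,3,4\notin D_{11}$: Theorems~\ref{Theorem 2.1}, \ref{Theorem 2.4}, \ref{Theorem 2.5} and \ref{Theorem 2.6} forbid degrees $1,2,3,4$, and Theorem~\ref{Theorem 2.7} produces a point of order $11$ over a quintic field; moreover Theorem~\ref{Theorem 2.8} shows $C_{11}$ does not occur over sextic fields. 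Fourth, and this is the substance of \cite{2}: for every prime $\ell\ge 11$, no element of $D_\ell$ divides $6$ or $10$ apart from the two just listed, and no element of $D_\ell$ equals $2$, a prime $\ge 7$, or twice a prime $\ge 7$; the inputs are the admissible orders of the isogeny characters for $\ell\in\{17,19,37,43,67,163\}$ (which are $8$ or $16$; $9$ or $18$; and so on --- always composite, never $2$, never twice a prime $\ge 7$) together with the bound $2(\ell-1)$ for all remaining $\ell$. Finally one uses the elementary fact that if a point of order $\ell$ is defined over a field of degree $e$ and $e$ divides $d$, then it is defined over a field of degree exactly $d$, since any number field of degree $e$ embeds in one of degree $d$ --- for instance by adjoining $\sqrt{q}$ for a rational prime $q$ unramified in it.

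The lemma then follows by matching divisors. For $d=2p$ with $p\ge 7$ prime, the divisors of $d$ are $1,2,p,2p$; by the previous paragraph no prime $\ell\ge 11$ contributes an element of $D_\ell$ lying in $\{2,p,2p\}$, and $1\in D_\ell$ only for $\ell\in\{2,3,5,7\}$, so $R_{\mathbb{Q}}(2p)=\{2,3,5,7\}$. For $d=10$ the divisors are $1,2,5,10$; here $5\in D_{11}$ contributes $11$, and for every other prime $\ell\ge 11$ no element of $D_\ell$ divides $10$ (in particular $3$ does not divide $10$), so $R_{\mathbb{Q}}(10)=\{2,3,5,7,11\}$. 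For $d=6$ the divisors are $1,2,3,6$; here $3\in D_{13}$ contributes $13$, and for every other prime $\ell\ge 11$ no element of $D_\ell$ divides $6$ (in particular $5$ does not divide $6$), so $R_{\mathbb{Q}}(6)=\{2,3,5,7,13\}$.

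The genuinely hard part is not this bookkeeping but everything attributed above to \cite{2}: determining the admissible orders of the isogeny characters for the six primes admitting only finitely many rational isogenies, and proving the lower bound $2(\ell-1)$ on orbit sizes for the remaining primes --- in particular ruling out small orbits when $G_E(\ell)$ lies in the normalizer of a split or nonsplit Cartan subgroup, which rests on the known rational points of the modular curves $X_{\mathrm{sp}}^{+}(\ell)$ and $X_{\mathrm{ns}}^{+}(\ell)$. All of this is in the literature, so here one only has to verify the divisibilities above.
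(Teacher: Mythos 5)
Your proposal follows essentially the same route as the paper: both reduce the lemma to the results of \cite{2} on the possible degrees $[\mathbb{Q}(P):\mathbb{Q}]$ for a point $P$ of prime order on an elliptic curve over $\mathbb{Q}$ (the paper invokes \cite[Corollary 6.1]{2}), and after that the lemma is divisor bookkeeping for $d=2p$, $10$, $6$, which you carry out correctly, including the finite check for $q\in\{11,13,17,19,37,43,67,163\}$ and the use of Theorems \ref{Theorem 2.1}--\ref{Theorem 2.8} for the inclusions.

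One point in your description of the input from \cite{2} is too weak as stated. For the primes $\ell\ge 23$ admitting no rational $\ell$-isogeny you invoke only the lower bound $[\mathbb{Q}(P):\mathbb{Q}]\ge 2(\ell-1)$; a lower bound alone cannot rule out that this degree is itself an odd prime $p$, or twice one, in which case $\ell$ would lie in $R_{\mathbb{Q}}(2p)$ after all. What is needed, and what \cite[Corollary 6.1]{2} actually provides (and the paper's proof uses), is a divisibility statement: for such $\ell$ the degree is divisible by $2(\ell-1)$ or by $(\ell^2-1)/3$, whence $2(\ell-1)\mid 2p$ or $(\ell^2-1)/3\mid 2p$, and both are impossible exactly as in the paper's computation. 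Likewise, for the sporadic isogeny primes $17,19,37,43,67,163$ the admissible isogeny-character orders only cover curves whose mod-$\ell$ image lies in a Borel; the non-Borel case for these primes also needs the divisibility input from \cite{2}, though again this is part of what that reference supplies. Finally, a cosmetic slip: adjoining $\sqrt{q}$ only doubles the degree, so to embed the degree-$e$ field $\mathbb{Q}(P)$ into a field of degree $d$ you should instead adjoin a root of, say, $x^{d/e}-q$ for a rational prime $q$ unramified in $\mathbb{Q}(P)$ (Eisenstein at a prime above $q$); this is the fact that makes the equivalence ``$\ell\in R_{\mathbb{Q}}(d)$ iff some element of $D_\ell$ divides $d$'' valid in all the cases you use it.
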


\begin{proof}
The claim will follow easily by \cite[Corollary 6.1.]{2}. We briefly sketch the proof.
Let $p \ge 7$ and $q \ge 23$, $q \neq 37, 43, 67, 163$ be a prime numbers and assume that $q \in R_{\mathbb{Q}}(2p)$. We have that $2(q-1)|2p$ or $\frac{q^2-1}{3}|2p$. If $2(q-1)|2p$, we have $q \in \{ 2, p+1 \}$, which is impossible. If $\frac{q^2-1}{3}|2p$, it follows that $(q-1)(q+1)|6p$. If $q-1 \ge p$, then  $(q-1)(q+1) > p^2 > 6p$, a contradiction. Therefore $q-1 \in \{ 1, 2, 3, 6 \}$, so $q \le 7$ which is impossible since $q \ge 23$. It remains to check that the claim holds for $q \in \{ 11, 13, 17, 19, 37, 43, 67, 163 \}$ which is trivial to do.
\\
The claims $R_{\mathbb{Q}}(10)= \{ 2, 3, 5, 7, 11 \}$ and $R_{\mathbb{Q}}(6)= \{ 2, 3, 5, 7, 13 \}$ also easily follow from \cite[Corollary 6.1.]{2}.
\end{proof}

\begin{theorem}
Let $K$ be a number field of prime degree $p$ and let $E/K$ be a $\mathrm{CM}$ elliptic curve with $j(E) \in \mathbb{Q}$. Then we have $E(K)_{tors} \in \Phi_{\mathbb{Q}}(p)$.
\end{theorem}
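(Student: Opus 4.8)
The plan is to reduce to the known classification of torsion of CM elliptic curves over $\mathbb{Q}$ combined with the list $\Phi_{\mathbb{Q}}(p)$ already available (Theorems~\ref{Theorem 2.5}, \ref{Theorem 2.7}, \ref{Theorem 2.8} and the case $p\ge 7$). Since $j(E)\in\mathbb{Q}$ and $j(E)$ is one of the finitely many CM $j$-invariants, it is in particular rational; the thirteen rational CM $j$-invariants are known explicitly. Fix an elliptic curve $E'/\mathbb{Q}$ with $j(E')=j(E)$ and with CM by the same order. If $j(E)\notin\{0,1728\}$, then $E$ is a quadratic twist of $E'$: there is a quadratic (or trivial) extension $L/K$ with $E\cong_L E'$. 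If $E$ is already a base change of a curve over $\mathbb{Q}$ there is nothing to prove, so assume otherwise.

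First I would handle the case $p=2$, which must be dealt with separately because $\Phi_{\mathbb{Q}}(2)\subsetneq\Phi(2)$ and there is genuinely more room; here one invokes the CM classification over quadratic fields (Zimmer et al., or the general odd-degree/low-degree CM results cited in the introduction) and checks directly that every group that occurs for a CM curve with rational $j$-invariant over a quadratic field already lies in $\Phi_{\mathbb{Q}}(2)$. For $p$ odd, the key observation is the cyclotomic obstruction already noted in the introduction: if $C_m\oplus C_n\subseteq E(K)_{\mathrm{tors}}$ with $m\mid n$ and $m\ge 3$, then $\mathbb{Q}(\zeta_m)\subseteq K$ is impossible by parity, so $E(K)_{\mathrm{tors}}$ is either cyclic or of the form $C_2\oplus C_{2k}$. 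It therefore suffices to bound the order of a cyclic point. A point $P\in E(K)$ of order $n$ gives, via the $L$-isomorphism with $E'$, a point of order $n$ on $E'(L)$ with $[L:\mathbb{Q}]\le 2p$; one then uses the CM torsion classification over number fields of degree $2p$ (Clark–Corn–Rice–Stankiewicz for $4\le 2p\le 13$, i.e.\ $p=3,5$; Bourdon–Pollack for odd degree, relevant once one notes a point of prime order $\ell$ on $E'$ over $L$ already lives over a subfield of odd degree $p$ when $\ell$ is odd) to list the possible $n$, and finally cross-checks each surviving $n$ against $\Phi_{\mathbb{Q}}(p)$ using the explicit twisting relation $\rho_{E,n}\sim\chi\cdot\rho_{E',n}$: for a CM curve the mod-$n$ image is abelian, so twisting is well understood and one can decide, $j$-invariant by $j$-invariant, whether the requisite point can actually be rational over a field of degree exactly $p$.

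The cases $j(E)=0$ and $j(E)=1728$ need the sharper sextic/quartic twist structure: here $E$ is a sextic (resp.\ quartic) twist of $E'/\mathbb{Q}$, so $E\cong_L E'$ for some $L/K$ of degree dividing $6$ (resp.\ $4$), and one repeats the argument with $[L:\mathbb{Q}]\le 6p$ (resp.\ $4p$), again using the explicit description of sextic and quartic twists of the curves with $j=0,1728$ together with the rational-points data of \cite{2} to rule out the sporadic large-order points that the degree bound alone would permit. For small $p$ one can, if needed, fall back on the division-polynomial method described in Section~2: for each of the finitely many rational CM $j$-invariants and each candidate $n$, compute $f_{E_0,n}$ and check that its smallest irreducible factor over $\mathbb{Q}$ has degree $>p$, which immediately kills that $(j,n)$ pair.

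The main obstacle I expect is not the degree bounds — those follow mechanically from the cited classifications — but the final cross-check that every $n$ surviving the degree bound actually yields a group \emph{in $\Phi_{\mathbb{Q}}(p)$} rather than merely in $\Phi(p)$ or $\Phi^{\mathrm{CM}}(p)$; this is exactly where the hypothesis $j(E)\in\mathbb{Q}$ (as opposed to $E$ itself being defined over $\mathbb{Q}$) could in principle introduce a genuinely new group, and ruling that out requires a careful, $j$-invariant-by-$j$-invariant analysis of the quadratic (resp.\ quartic, sextic) twist families and their mod-$n$ images, precisely the kind of argument carried out in the non-CM part of the paper but now with the simplification that all the relevant Galois images are abelian.
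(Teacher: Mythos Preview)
Your proposal is workable in principle but takes a much longer route than the paper does. The paper's proof is essentially one line: it cites the Bourdon--Pollack classification of torsion of CM elliptic curves over \emph{odd degree} number fields (reference \cite{39}) and the Bourdon--Clark result on CM torsion over real number fields (reference \cite{40}), together with Lemma~\ref{Lemma 3.1}, and observes that these immediately give $E(K)_{\mathrm{tors}}\in\Phi_{\mathbb{Q}}(p)$. The crucial point you miss is that these classification theorems apply \emph{directly to $E$ over $K$}: since $[K:\mathbb{Q}]=p$ is prime and $E$ is CM, Bourdon--Pollack (for $p$ odd) already tells you the possible torsion groups of $E(K)$ outright, with no need to pass to a twist $E'/\mathbb{Q}$, no need to work over $L$ of degree $2p$ (let alone $4p$ or $6p$), and no need to treat $j=0,1728$ separately. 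Your approach---twist to a rational curve, bound $[L:\mathbb{Q}]$, then do case analysis---is exactly the strategy the paper uses for the \emph{non}-CM case in Sections~3--8, and it is precisely because the CM case admits the direct classification that the paper disposes of it in one sentence before restricting to non-CM curves. What your route buys is self-containment (you avoid the heavy cited theorems), at the cost of the $j$-invariant-by-$j$-invariant verification you correctly identify as your main obstacle; the paper's route buys a one-line proof by outsourcing that verification to \cite{39} and \cite{40}.
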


\begin{proof}
The claim follows easily by \cite[Theorem 1.2.]{39}, \cite[Theorem 1.5.]{40} and Lemma \ref{Lemma 3.1}.
\end{proof}

Therefore, from now on we shall assume that elliptic curves we're dealing with do not have $\mathrm{CM}$.

\begin{lemma}
\label{Lemma 3.2}
Let $E/\mathbb{Q}$ be an elliptic curve and $p \ge 3$ a prime and $[F:\mathbb{Q}]=2$. Assume that $E(F)[p] \supseteq C_p$, but $E(\mathbb{Q})[p]={O}$. Then there there exists quadratic twist $E'/\mathbb{Q}$ of $E/\mathbb{Q}$ such that $E'(\mathbb{Q})[p]=C_p$.
\end{lemma}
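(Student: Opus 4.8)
The plan is to exploit the fact that $E(F)[p] \supseteq C_p$ forces the mod-$p$ Galois representation $\rho_{E,p}$ of $E/\mathbb{Q}$ to land in a Borel subgroup, and then to realize the cyclic $p$-isogeny as a rational point on a suitable quadratic twist. Concretely, pick a point $P \in E(F)[p]$ of order $p$. Since $F/\mathbb{Q}$ is quadratic, the subgroup $\langle P \rangle$ is $\Gal(\overline{\mathbb{Q}}/\mathbb{Q})$-stable (any conjugate of $P$ lies in $E(F)[p]$, which is cyclic of order $p$ — here we use $p \ge 3$ so that a quadratic field cannot contain $\zeta_p$, hence $E(F)[p]$ is cyclic, not $C_p \oplus C_p$), so $E$ has a rational $p$-isogeny with kernel $\langle P\rangle$. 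Completing $P$ to a basis $\{P, Q\}$ of $E[p]$, the image $G_E(p)$ is contained in the Borel subgroup $\left\{\begin{bmatrix} a & b \\ 0 & d\end{bmatrix}\right\}$, and the action on $\langle P \rangle$ is given by a character $\theta\colon \Gal(\overline{\mathbb{Q}}/\mathbb{Q}) \to (\mathbb{Z}/p\mathbb{Z})^\times$, namely $\sigma(P) = \theta(\sigma) P$.

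Next I would analyze $\theta$. Since $P$ becomes rational over $F$, the restriction of $\theta$ to $\Gal(\overline{\mathbb{Q}}/F)$ is trivial, so $\theta$ factors through $\Gal(F/\mathbb{Q}) \cong \mathbb{Z}/2\mathbb{Z}$; thus $\theta$ takes values in $\{\pm 1\} \subseteq (\mathbb{Z}/p\mathbb{Z})^\times$. If $\theta$ is trivial then $P \in E(\mathbb{Q})[p]$, contradicting the hypothesis $E(\mathbb{Q})[p] = O$. Hence $\theta$ is the (unique) quadratic character cutting out $F$, i.e. $\theta = \chi_F$, the Kronecker character of $F = \mathbb{Q}(\sqrt{d})$. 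Now let $E' = E^d$ be the quadratic twist of $E$ by $d$. Twisting multiplies the mod-$p$ representation by $\chi_F$: on the line $\langle P \rangle$ the action of $E'$ is by $\theta \cdot \chi_F = \chi_F^2 = 1$. Therefore the corresponding kernel point $P' \in E'[p]$ is fixed by all of $\Gal(\overline{\mathbb{Q}}/\mathbb{Q})$, giving a point of order $p$ in $E'(\mathbb{Q})$, so $C_p \subseteq E'(\mathbb{Q})[p]$. Since $E'(\mathbb{Q})$ cannot contain $C_p \oplus C_p$ for $p \ge 3$ (again by Weil pairing, as $\zeta_p \notin \mathbb{Q}$), we get exactly $E'(\mathbb{Q})[p] = C_p$.

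The only genuine subtlety is making the twisting-by-character step precise at the level of torsion points rather than just projective representations: one must check that the isomorphism $E' \to E$ over $F$ carries a $\Gal(\overline{\mathbb{Q}}/F)$-fixed point of $E'[p]$ to $P$ and intertwines the Galois actions by exactly the character $\chi_F$, so that the cocycle computation $\theta \cdot \chi_F = 1$ really does produce a $\mathbb{Q}$-rational point. This is standard (the twist is defined by the cocycle $\sigma \mapsto [\chi_F(\sigma)] \in \Aut(E)$), but it is the place where the argument could go wrong if one is careless about whether the twisting character is $\chi_F$ or its inverse — harmless here since $\chi_F$ has order $2$. I would also remark that the hypothesis $p \ge 3$ is used twice, both times to guarantee cyclicity of the relevant $p$-torsion via $[\mathbb{Q}(\zeta_p):\mathbb{Q}] = p - 1 > 2$.
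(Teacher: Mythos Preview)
Your argument is essentially correct and arrives at the same twist $E' = E^d$ as the paper, but by a longer and more representation-theoretic route. The paper's proof is two lines: writing $F = \mathbb{Q}(\sqrt{d})$ and setting $E' := E^d$, it invokes the standard decomposition $E(F)[p] = E(\mathbb{Q})[p] \oplus E'(\mathbb{Q})[p]$ (valid for odd $p$; these are the $\pm 1$-eigenspaces for the Galois involution acting on $E(F)[p]$), so from $E(\mathbb{Q})[p] = \{O\}$ and $C_p \subseteq E(F)[p]$ one reads off $C_p \subseteq E'(\mathbb{Q})[p]$ immediately, with equality by Mazur's theorem. Your proof unpacks this same eigenspace decomposition one level down: you first argue that $\langle P\rangle$ is Galois-stable, then identify the diagonal character $\theta$ as $\chi_F$, and finally twist by $\chi_F$ to trivialise it. What the paper's approach buys is brevity and the avoidance of any auxiliary claim about cyclicity; what yours buys is an explicit explanation of \emph{why} the twist works, via the identity $\theta \cdot \chi_F = 1$.

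There is, however, one genuine slip. You claim that for $p \ge 3$ a quadratic field cannot contain $\zeta_p$, citing $[\mathbb{Q}(\zeta_p):\mathbb{Q}] = p - 1 > 2$; this fails precisely for $p = 3$, since $\mathbb{Q}(\zeta_3) = \mathbb{Q}(\sqrt{-3})$ is quadratic. If $F = \mathbb{Q}(\sqrt{-3})$ and $E(F)[3] = C_3 \oplus C_3$, an arbitrary $P$ need not generate a Galois-stable line, and your Borel step breaks. It turns out the hypothesis $E(\mathbb{Q})[3] = \{O\}$ already rules this out (the image of $\rho_{E,3}$ would then have order $2$ with determinant surjecting onto $(\mathbb{Z}/3\mathbb{Z})^\times$, forcing the nontrivial element to be conjugate to $\mathrm{diag}(1,-1)$, which fixes a nonzero vector), but you must supply this argument rather than rely on an inequality that does not hold at $p=3$. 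The paper's direct-sum argument never needs $E(F)[p]$ to be cyclic and so avoids the issue altogether.
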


\begin{proof}
Since $F=\mathbb{Q}(\sqrt{d})$, put $E':=(E)^{d}$. $E'$ and $E$ are isomorphic over $F$ but not over $\mathbb{Q}$. Since $C_p \subseteq E(F)[p]=E(\mathbb{Q})[p] \oplus E'(\mathbb{Q})[p]$ and $E(\mathbb{Q})[p]={O}$ it follows that $C_p \subseteq E'(\mathbb{Q})[p]$. Obviously equality must hold, because of Mazur's theorem.
\end{proof}

\begin{lemma}
\label{Lemma 3.3}
Let $K$ be a number field such that $[K:\mathbb{Q}]$ is odd, $m \ne 2$ an integer and $E/K$ an elliptic curve with $j(E) \in \mathbb{Q}$ such that $C_m \subseteq E(K)$. If $E'/\mathbb{Q}$ is an elliptic such that $j(E)=j(E')$ and $\mathbb{Q}(E'[m]) \cap K =\mathbb{Q}$, then $G_{E'}(m)$ is conjugate to a subgroup of $B$, where 
\[ B:=\bigg\{ \begin{bmatrix}
\pm 1 && * \\
0 && *
\end{bmatrix} \bigg\} \subseteq \Gl_{2}(\mathbb{Z}/m\mathbb{Z}).\]
\end{lemma}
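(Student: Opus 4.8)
The plan is to show that the image $G_{E'}(m)$ fixes a line in $E'[m]$ and acts on that line through a character of order dividing $2$. First I would work prime by prime and then glue: write $m = \prod q_i^{a_i}$ and note that $G_{E'}(m)$ injects into $\prod G_{E'}(q_i^{a_i})$, so it suffices to prove the analogous statement modulo each prime power $q^a \mid m$ and to observe that the shape of $B$ modulo $m$ is exactly the fibre product of the corresponding $B$'s modulo the $q_i^{a_i}$. So fix a prime power $q^a \| m$; since $m \ne 2$, at least one such $q^a$ is odd, and I should keep the discussion uniform enough to cover $q = 2$ as well (there $\pm 1$ collapses, but that only makes $B$ larger, so the statement is weaker and still true once we have a fixed line).

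Next, the existence of $C_{q^a} \subseteq E(K)$: since $j(E) = j(E')$ and $E$ has no CM, $E$ is a quadratic twist $E' \otimes \chi$ of $E'$ for some quadratic character $\chi$ of $G_{\mathbb{Q}}$, hence $\rho_{E,q^a} \cong \chi \otimes \rho_{E',q^a}$ as representations of $G_{\mathbb{Q}}$ (the Galois group of $\overline{\mathbb{Q}}/\mathbb{Q}$ acts on the twist with the character twisted in). A point of order $q^a$ rational over $K$ means the restriction $\rho_{E,q^a}|_{G_K}$ fixes a vector, i.e. has a subrepresentation on which $G_K$ acts trivially. Restricting the twisting relation to $G_K$, $\rho_{E',q^a}|_{G_K}$ then has a subrepresentation on which $G_K$ acts through $\chi|_{G_K}$, a character of order $\le 2$. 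Now I would use the hypothesis $\mathbb{Q}(E'[q^a]) \cap K = \mathbb{Q}$: this says the restriction map $\Gal(\mathbb{Q}(E'[q^a])/\mathbb{Q}) \to \Gal(\mathbb{Q}(E'[q^a]) \cdot K / K)$ is an isomorphism, so the $G_K$-stable line is in fact defined over $\mathbb{Q}$ as a $G_{\mathbb{Q}}$-submodule, and the character giving the $G_{\mathbb{Q}}$-action on it is the unique extension to $G_{\mathbb{Q}}$ of the order-$\le 2$ character $\chi|_{G_K}$ — which is again a character of order dividing $2$ valued in $(\mathbb{Z}/q^a\mathbb{Z})^\times$, i.e. takes values in $\{\pm 1\}$. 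Choosing a basis whose first vector spans this line puts $\rho_{E',q^a}(G_{\mathbb{Q}})$ in the form $\left[\begin{smallmatrix}\pm 1 & * \\ 0 & *\end{smallmatrix}\right]$.

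The step I expect to be the main obstacle is the descent of the line from $G_K$ to $G_{\mathbb{Q}}$, and in particular verifying that the character is exactly order $\le 2$ rather than merely "order dividing $[K:\mathbb{Q}] \cdot 2$" or something weaker — one has to be careful that $\chi|_{G_K}$ really is still quadratic (it is, since $\chi$ itself is, and restriction can only decrease the order) and that the $G_K$-stable submodule is unique, or at least that among the possibly several $G_K$-stable lines one can be chosen that is $G_{\mathbb{Q}}$-stable. The cleanest way around the uniqueness worry is: the $G_K$-stable line on which $G_K$ acts by $\chi|_{G_K}$ corresponds, after untwisting back by $\chi$, to the $G_K$-fixed line coming from the actual rational point $P \in E(K)$ of order $q^a$; that line is canonically attached to $P$, and since $\mathbb{Q}(E'[q^a]) \cap K = \mathbb{Q}$ forces $\mathbb{Q}(E[q^a]) \cap K = \mathbb{Q}$ too (same field, as $E$ and $E'$ are twists and their $q^a$-division fields coincide for odd $q$; for $q=2$ one argues separately but the conclusion there is weaker), the subgroup $\langle P \rangle$ is $G_{\mathbb{Q}}$-stable. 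Twisting this $G_{\mathbb{Q}}$-stable cyclic subgroup by $\chi$ gives a $G_{\mathbb{Q}}$-stable cyclic subgroup of $E'[q^a]$ of order $q^a$ on which $G_{\mathbb{Q}}$ acts by $\chi \cdot 1 = \chi$, a quadratic character, and assembling over the prime powers dividing $m$ yields $G_{E'}(m) \le B$. I would close by remarking that when $m$ is odd the $\pm 1$ forces nothing extra beyond $E'$ having a rational $m$-isogeny together with a quadratic-twist condition, which is the form in which the lemma gets used later.
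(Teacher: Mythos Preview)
Your underlying approach matches the paper's: establish that $\rho_{E',m}(G_K)$ is (conjugate to) a subgroup of $B$ via the twisting relation, then use $\mathbb{Q}(E'[m]) \cap K = \mathbb{Q}$ to conclude $\rho_{E',m}(G_{\mathbb{Q}}) = \rho_{E',m}(G_K)$. The paper compresses this into two lines by citing \cite[Corollary~5.25]{19} for the first step and does not reduce to prime powers (the argument works for $m$ directly, so the Chinese remainder decomposition and the separate $q=2$ discussion are unnecessary).

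That said, your write-up has a recurring confusion you should fix. The curve $E$ is defined only over $K$, so $E$ is a quadratic twist of the base change $E'_K$ by a character $\chi$ of $G_K$, \emph{not} of $G_{\mathbb{Q}}$; the relation $\rho_{E,m} \cong \chi \otimes \rho_{E',m}|_{G_K}$ is an isomorphism of $G_K$-representations, and there is nothing to ``restrict to $G_K$''. Once this is corrected, the argument is cleanest at the level of images: $\rho_{E',m}(G_K)\subseteq\Gl_2(\mathbb{Z}/m\mathbb{Z})$ fixes a line and acts on it through the order-$\le 2$ character $\chi$, hence lies in $B$; disjointness then gives $\rho_{E',m}(G_K)=\rho_{E',m}(G_{\mathbb{Q}})=G_{E'}(m)$ as subgroups of $\Gl_2(\mathbb{Z}/m\mathbb{Z})$, so no ``extension of characters to $G_{\mathbb{Q}}$'' is required.

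Your final paragraph (the ``cleanest way around the uniqueness worry'') should be dropped: the claim that the $q^a$-division fields of $E$ and $E'$ coincide for odd $q$ is false in general (they can differ by the quadratic extension cut out by $\chi$), and ``$\mathbb{Q}(E[q^a])$'' is not even defined since $E$ is not defined over $\mathbb{Q}$. The uniqueness worry is a non-issue once you compare images rather than track individual lines.
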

\begin{proof}
By \cite[Corollary 5.25.]{19} we see that $\Gal(K\mathbb{Q}(E'[m])/K) \le B$ (up to conjugacy). Since $\mathbb{Q}(E'[m]) \cap K =\mathbb{Q}$ we have $\Gal(K\mathbb{Q}(E'[m])/K) \cong \Gal(\mathbb{Q}(E[m])/\mathbb{Q})$ by a basic Galois theory argument. Therefore, $G_{E'}(m)$ is conjugate to a subgroup of $B$.
\end{proof}

\begin{lemma}
\label{Lemma 3.4}
Let $K$ be a number field such that $[K:\mathbb{Q}] \ge 5$ is prime and $E/K$ be an elliptic curve with rational j-invariant and assume that $C_{2^k \cdot 3^l} \subseteq E(K)$. Then $E'$ has a rational $2^k \cdot 3^{l}$-isogeny.
\end{lemma}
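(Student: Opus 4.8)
The plan is to reduce everything to statements about the mod-$q$ Galois representation of $E'$ for each prime $q \in \{2,3\}$ dividing $2^k\cdot 3^l$, and then to the structure of $q$-power torsion. First, since $[K:\mathbb{Q}]=p\ge 5$ is prime and $E'/\mathbb{Q}$ is a twist of $E$, the extension $K\mathbb{Q}(E'[2^k3^l])/\mathbb{Q}(E'[2^k3^l])$ has degree dividing $p$, hence $K\cap\mathbb{Q}(E'[2^k3^l])=\mathbb{Q}$ unless $p$ divides $[\mathbb{Q}(E'[2^k3^l]):\mathbb{Q}]=\lvert G_{E'}(2^k3^l)\rvert$. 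But $\lvert\Gl_2(\mathbb{Z}/2^k3^l\mathbb{Z})\rvert$ is a product of powers of $2$ and $3$, so for $p\ge 5$ the prime $p$ never divides it; therefore $K\cap\mathbb{Q}(E'[2^k3^l])=\mathbb{Q}$ automatically, and Lemma~\ref{Lemma 3.3} applies (note $m=2^k3^l\ne 2$ is allowed since $k\ge 1,l\ge 1$, or if one of $k,l$ is $0$ we still have $m\ne 2$ unless $k=1,l=0$; that degenerate case $m=2$ gives a rational $2$-isogeny trivially, as any $\Gl_2(\mathbb{Z}/2\mathbb{Z})$-subgroup fixing a line does). Thus $G_{E'}(2^k3^l)$ is conjugate to a subgroup of $B=\left\{\begin{bmatrix}\pm 1 & *\\ 0 & *\end{bmatrix}\right\}$.

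Next I would pass from $B$ to an actual rational isogeny. The point is that a subgroup of $B$ in $\Gl_2(\mathbb{Z}/2^k3^l\mathbb{Z})$ fixes the line spanned by $\begin{bmatrix}1\\0\end{bmatrix}$ as a \emph{set} (the top-left entry is $\pm 1$, so the line $\{(t,0)\}$ is stabilized); equivalently, the cyclic subgroup of order $2^k3^l$ generated by the corresponding point is $\Gal(\overline{\mathbb{Q}}/\mathbb{Q})$-invariant. By the characterization of rational isogenies recalled at the start of Section~2, this means precisely that $E'$ admits a $\mathbb{Q}$-rational cyclic $2^k3^l$-isogeny. One subtlety: a priori the hypothesis only gives us the line up to the $\pm 1$ ambiguity, but $\{(t,0): t\}$ as a subgroup is still preserved, so the argument goes through without needing to kill the sign.

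The one thing that genuinely needs care — and I expect it to be the main obstacle — is making sure the hypothesis $C_{2^k3^l}\subseteq E(K)$ really does force $\mathrm{Im}\,\rho_{E,2^k3^l}$ into $B_0=\left\{\begin{bmatrix}1&*\\0&*\end{bmatrix}\right\}$ after choosing a basis with $P_{2^k3^l}$ as first vector, and then tracking how the quadratic twist relation $\rho_{E,2^k3^l}\sim\chi\cdot\rho_{E',2^k3^l}$ changes this: twisting by the quadratic character $\chi$ multiplies the whole matrix by $\pm 1$, which turns the $1$ in the top-left corner into $\pm 1$, landing us in $B$ rather than $B_0$. This is exactly the shape of Lemma~\ref{Lemma 3.3}, so invoking it directly is cleanest; but one must double-check the separability/CM caveat (we have already reduced to the non-CM case) and that $K$ being of prime degree $\ge 5$ guarantees the linear-disjointness hypothesis of that lemma, which is the numerical observation about $\lvert\Gl_2\rvert$ above. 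Once that is in place, the conclusion $n\in\{1,\dots,19,21,25,27,37,43,67,163\}$-type constraints from Theorem~\ref{Theorem 2.3} are not even needed here — the statement only asserts the existence of the isogeny — so the proof ends as soon as the invariant cyclic subgroup is produced.
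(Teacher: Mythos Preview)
Your argument is correct and follows the same route as the paper: verify that $\lvert \Gl_2(\mathbb{Z}/2^k3^l\mathbb{Z})\rvert$ is a $\{2,3\}$-number, conclude that $K\cap\mathbb{Q}(E'[2^k3^l])=\mathbb{Q}$ since $[K:\mathbb{Q}]=p\ge 5$ is coprime to it, and then invoke Lemma~\ref{Lemma 3.3}. The paper does this in two lines (citing a formula for $\lvert\Gl_2(\mathbb{Z}/q^k\mathbb{Z})\rvert$ for $q=2,3$ separately and then Lemma~\ref{Lemma 3.3}); your additional paragraphs unpacking why containment in $B$ yields a Galois-stable cyclic subgroup, how the quadratic-twist character turns $B_0$ into $B$, and the $m=2$ edge case are all accurate elaborations of what the paper leaves implicit.
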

\begin{proof}
By \cite[Corollary 2.8.]{39} we have $|\Gl_{2}(\mathbb{Z}/2^{k}\mathbb{Z})|=2^{4k-3} \cdot 3$ and $|\Gl_{2}(\mathbb{Z}/3^{k}\mathbb{Z})|=3^{4k-3} \cdot 2^{4}$. Let $p \in \{ 2, 3 \}$ be a prime number. Since $G_{E}(p) \le \Gl_{2}(\mathbb{Z}/p^{k}\mathbb{Z})$, it follows that $[\mathbb{Q}(E[p]):\mathbb{Q}]=|G_E(p)|$ divides $|\Gl_{2}(\mathbb{Z}/p^{k}\mathbb{Z})| \in \{2^{4k-3} \cdot 3,  3^{4k-3} \cdot 2^{4} \}$ so $\mathbb{Q}(E[p])$ has a trivial intersection with $K$, i.e. $\mathbb{Q}(E[p]) \cap K =\mathbb{Q}$. The claim now follows from Lemma \ref{Lemma 3.3}.
\end{proof}

\begin{lemma}
\label{Lemma 3.5}
Let $K$ be a number field such that $[K:\mathbb{Q}] \neq 5$ is an odd prime and $E/K$ be an elliptic curve with rational $j$-invariant such that $C_5 \subseteq E(K)_{tors}$. Then $E$ is a base change of an elliptic curve defined over $\mathbb{Q}$. If $[K:\mathbb{Q}]=5$ and $C_5 \subseteq E(K)_{tors}$, then $E'$ has a rational $5$-isogeny. 
\end{lemma}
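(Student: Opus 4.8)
The plan is to split into two cases according to whether $[K:\mathbb{Q}] = 5$ or $[K:\mathbb{Q}] = p$ is an odd prime different from $5$. In both cases the starting point is the same: pick $E'/\mathbb{Q}$ with $j(E') = j(E)$, so that $E$ is a quadratic twist of $E'$ over some (at most) quadratic extension $L/K$. First I would record the elementary observation that $|\mathrm{GL}_2(\mathbb{Z}/5\mathbb{Z})| = 480 = 2^5 \cdot 3 \cdot 5$, so the degree $[\mathbb{Q}(E'[5]):\mathbb{Q}] = |G_{E'}(5)|$ divides $480$ and in particular is never divisible by any odd prime $p \neq 5$; hence when $p \neq 5$ we get $\mathbb{Q}(E'[5]) \cap K = \mathbb{Q}$ and Lemma \ref{Lemma 3.3} applies, giving that $G_{E'}(5)$ is conjugate to a subgroup of $B$. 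By contrast, when $p = 5$ the intersection $\mathbb{Q}(E'[5]) \cap K$ could be $K$ itself (the $5$-division field can contain a degree-$5$ subfield), so the argument of Lemma \ref{Lemma 3.3} breaks down and we can only aim for the weaker conclusion of a rational $5$-isogeny.

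For the case $p \neq 5$: having placed $G_{E'}(5)$ inside $B = \left\{ \begin{bmatrix} \pm 1 & * \\ 0 & * \end{bmatrix} \right\}$, I would consult Sutherland's tables \cite{19} for the subgroups of $\mathrm{GL}_2(\mathbb{Z}/5\mathbb{Z})$ contained in $B$. Such a group fixes a line, so $E'$ has a rational $5$-isogeny; moreover the subgroups of $B$ are exactly those whose associated modular curve $X_H$ one can analyze. The key point I want is that the index-$2$ subgroup $B_0 = \left\{ \begin{bmatrix} 1 & * \\ 0 & * \end{bmatrix} \right\}$ corresponds to having a rational point of order $5$: if $G_{E'}(5) \le B_0$ then $E'(\mathbb{Q})$ (or a quadratic twist, via Lemma \ref{Lemma 3.2}) already has a point of order $5$, and then working back through $L$ one shows $E$ is a base change of a $\mathbb{Q}$-curve. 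The cases where $G_{E'}(5)$ lies in $B$ but not $B_0$ are the ones to rule out: here the mod-$5$ character is the quadratic one $\begin{bmatrix} \pm 1 & * \\ 0 & * \end{bmatrix}$, and the twist that kills it is precisely the twist relating $E$ and $E'$ — so after twisting by the right $d$ we land in $B_0$ and conclude as before. I would make this precise by comparing $\rho_{E,5} \sim \chi \cdot \rho_{E',5}$ where $\chi$ cuts out $L$, observing that $C_5 \subseteq E(K)$ forces $\rho_{E,5}(\mathrm{Gal}(\bar{\mathbb{Q}}/K)) \le B_0$, and that the quadratic character $\chi$ restricted to $\mathrm{Gal}(\bar{\mathbb{Q}}/K)$ is trivial (since $L \subseteq K$ would contradict $[L:K]=2$, the twisting field is determined and its associated character matches the top-left entry of $\rho_{E',5}$). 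This identifies the relevant twist and reduces everything to the $B_0$ case, yielding that $E$ itself is defined over $\mathbb{Q}$ up to $K$-isomorphism, i.e. a base change.

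For the case $p = 5$: here I only need to produce a rational $5$-isogeny for $E'$. Since $C_5 \subseteq E(K)$ with $[K:\mathbb{Q}] = 5$, there is $P_5 \in E(K)$ of order $5$, hence $P_5' \in E'(L)$ of order $5$ with $[\mathbb{Q}(P_5'):\mathbb{Q}] \mid [L:\mathbb{Q}] = 10$. By Lemma \ref{Lemma 3.1}, $R_{\mathbb{Q}}(10) = \{2,3,5,7,11\}$, so $5 \in R_{\mathbb{Q}}(10)$ is consistent, but the structural input I want is finer: a point of order $5$ on $E'$ defined over a field of degree dividing $10$ over $\mathbb{Q}$, combined with the classification of mod-$5$ images from Sutherland's tables \cite{19} and the results of \cite{2} on degrees of definition of torsion points of curves over $\mathbb{Q}$, forces $G_{E'}(5)$ to be contained in a Borel (equivalently, $E'$ has a rational $5$-isogeny). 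Concretely, the mod-$5$ image is constrained because a point of order $5$ over a degree-$\le 10$ field means the corresponding orbit of $\rho_{E',5}$ on a nonzero vector of $E'[5]$ has length dividing $10$; checking the subgroups of $\mathrm{GL}_2(\mathbb{Z}/5\mathbb{Z})$ one sees the only ones with a vector-orbit of length dividing $10$ and which are not already contained in a Borel are ruled out by a short case check (the exceptional/Cartan-normalizer images have minimal vector-orbit length too large, and the split/non-split Cartan-normalizer cases are excluded by examining what happens over the quadratic resolvent). Hence $G_{E'}(5)$ fixes a line and $E'$ has a rational $5$-isogeny.

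The main obstacle I anticipate is the bookkeeping in the $p=5$ case: one must carefully distinguish between $E$ being a base change (handled already if $E'$ works directly) versus needing the twist, and pin down exactly which mod-$5$ images of $E'/\mathbb{Q}$ admit a point of order $5$ over a field of degree dividing $10$ without themselves being Borel — this requires a careful pass through Sutherland's labels for $\mathrm{GL}_2(\mathbb{Z}/5\mathbb{Z})$ and cross-referencing the degree-of-definition data in \cite{2} and \cite{5}, \cite{36}. The $p \neq 5$ case should be comparatively clean since Lemma \ref{Lemma 3.3} does the heavy lifting and only the reconciliation of the twisting character with the top-left entry of $\rho_{E',5}$ remains.
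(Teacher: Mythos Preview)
Your treatment of the case $p=5$ is essentially the paper's: once $P_5' \in E'(L)$ with $[\mathbb{Q}(P_5'):\mathbb{Q}] \mid 10$, a pass through the possible mod-$5$ images (equivalently, \cite[Table~1]{2}) shows that the only ones admitting such a degree already lie in a Borel, so $E'$ has a rational $5$-isogeny.

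For $p \neq 5$ there is a genuine gap. You correctly deduce (via Lemma~\ref{Lemma 3.3} and $|\mathrm{GL}_2(\mathbb{Z}/5\mathbb{Z})| = 2^5\cdot 3\cdot 5$) that $G_{E'}(5)$ lands in $B$, and hence some $\mathbb{Q}$-twist $E''$ of $E'$ has a rational point of order $5$. But the assertion that ``the twist that kills it is precisely the twist relating $E$ and $E'$'' is unjustified. The character $\epsilon\colon G_{\mathbb{Q}}\to\{\pm 1\}$ given by the upper-left entry of $\rho_{E',5}$ need not restrict to the character $\chi\colon G_K\to\{\pm 1\}$ cutting out $L/K$. (Your parenthetical about ``$\chi$ restricted to $G_K$ is trivial'' is confused: $\chi$ is already a character of $G_K$, and its triviality is exactly the statement that $E\cong_K E'$, which is what you are trying to prove.) Concretely, if $\rho_{E',5}(G_K)$ stabilizes \emph{two} lines in $E'[5]$, the line carrying the $K$-rational $5$-point of $E$ may differ from the $G_{\mathbb{Q}}$-stable line, and then $\chi \neq \epsilon|_{G_K}$; nothing in your argument excludes this.

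The paper sidesteps this entirely with a contradiction via the Weil pairing. Assuming $E$ is \emph{not} a base change, one has $E\not\cong_K E''$, so $E$ and $E''$ become isomorphic only over a genuine quadratic extension $L'/K$. The decomposition $E''(L')[5]\cong E(K)[5]\oplus E''(K)[5]$ then yields $C_5\oplus C_5\subseteq E''(L')$, and the Weil pairing forces $\mathbb{Q}(\zeta_5)\subseteq L'$, i.e.\ $4\mid 2p$, impossible for odd $p$. Your approach can be salvaged by inserting exactly this Weil-pairing step to dispose of the case $\chi\neq\epsilon|_{G_K}$; as written, it does not close.
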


\begin{proof}
Assume that $[K:\mathbb{Q}] \neq 5$ is an odd prime and $E$ is not a base change of an elliptic curve defined over $\mathbb{Q}$. Since $C_5 \subseteq E(K)$, it follows that $C_5 \subseteq E'(L)$. Let $P_5$ be a point of order $5$ on $E'(L)$. We have that $\mathbb{Q}(P_5) \subseteq L$ and so $[\mathbb{Q}(P_5):\mathbb{Q}]$ divides $[L:\mathbb{Q}]=2p$, where $p=[K:\mathbb{Q}]$. By \cite[Table 1]{2}, we see that the only possibilities for $[\mathbb{Q}(P_5):\mathbb{Q}]$ are $1$ and $2$. Now we apply Lemma \ref{Lemma 3.2} to $E'$ to obtain a quadratic twist $E''/\mathbb{Q}$ such that $C_5 \subseteq E''(\mathbb{Q})$. Since $E$ and $E''$ are quadratic twists, they are isomorphic over some quadratic extension $L'$ of $K$ and we have $C_5 \oplus C_5 \subseteq E(K)[5] \oplus E''(K)[5] \cong E''(L')[5]$. The Weil pairing implies that $\mathbb{Q}(\zeta_5) \subseteq L'$ and so $[\mathbb{Q}(\zeta_5):\mathbb{Q}]=4$ divides $[L':\mathbb{Q}]=2p$, which is impossible.
\\ If $[K:\mathbb{Q}]=5$, by applying the same reasoning as in the previous paragraph it can be easily seen that $E'$ must have a rational $5$-isogeny.
\end{proof}

\begin{lemma}
\label{Lemma 3.6}
Let $K$ be a number field such that $[K:\mathbb{Q}] \neq 3, 7$ is prime and $E/K$ be an elliptic curve with rational $j$-invariant such that $C_7 \subseteq E(K)_{tors}$. Then $E$ is a base change of elliptic curve defined over $\mathbb{Q}$. If $[K:\mathbb{Q}]=7$ and $C_7 \subseteq E(K)_{tors}$, then $E'$ has a rational $7$-isogeny. If $[K:\mathbb{Q}]=3$ and $C_7 \subseteq E(K)_{tors}$, then $E'$ has a rational $7$-isogeny  unless $E'$ has $\mathrm{LMFDB}$ label $\href{http://www.lmfdb.org/EllipticCurve/Q/2450ba1/}{\mathrm{2450.y1}}$ or $\href{http://www.lmfdb.org/EllipticCurve/Q/2450bd1/}{\mathrm{2450.z1}}$ (or equivalently, if $G_{E'}(7)$ is conjugate to a group with label $7Ns.2.1.$). 
\end{lemma}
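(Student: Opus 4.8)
The plan is to run the argument of Lemma~\ref{Lemma 3.5} with the prime $7$ in place of $5$; the new ingredients are the arithmetic facts $[\mathbb{Q}(\zeta_7):\mathbb{Q}]=6$ and $7\nmid 48$ (the number of nonzero elements of $E'[7]$), together with a hands-on analysis of $G_{E'}(7)$ when $[K:\mathbb{Q}]=3$. Write $p=[K:\mathbb{Q}]$. Fix an elliptic curve $E'/\mathbb{Q}$ with $j(E')=j(E)$, let $L$ be the field over which $E$ and $E'$ become isomorphic (so $[L:K]\in\{1,2\}$ and $[L:\mathbb{Q}]$ divides $2p$), and let $P'_7\in E'(L)$ be the image of the given order-$7$ point of $E(K)$ under a fixed $L$-isomorphism $E\cong E'$. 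Since $E(K)$ contains a point of order $7$ and $\rho_{E,7}\sim\chi\cdot\rho_{E',7}$ for the quadratic character $\chi$ of $L/K$, the subgroup $\rho_{E',7}(\Gal(\overline{\mathbb{Q}}/K))$ of $G_{E'}(7)$ stabilises the line $\langle P'_7\rangle\subseteq E'[7]$ and acts on it through $\{\pm1\}$, and its index in $G_{E'}(7)$ divides $p$. Put $d:=[\mathbb{Q}(P'_7):\mathbb{Q}]$; then $d$ is the size of the $G_{E'}(7)$-orbit of a generator of $\langle P'_7\rangle$, so $d$ divides both $2p$ and $48$.

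\emph{The cases $p\notin\{3,7\}$ and $p=7$.} For $p\ge 5$ we have $\gcd(2p,48)=2$, while $\gcd(14,48)=2$ and $\gcd(4,48)=4$. When $p=2$ I would also exclude $d=4$, which follows from \cite[Table~1]{2} (equivalently, in the only relevant mod-$7$ images — those in a Borel, or in the normalizer of a split Cartan — surjectivity of $\det\circ\rho_{E',7}$ already forbids an orbit of size $4$). So in all these cases $d\in\{1,2\}$. As $\det\circ\rho_{E',7}$ is surjective onto $(\mathbb{Z}/7\mathbb{Z})^\times$, an orbit of size $2$ must be $\{v,-v\}$, so $G_{E'}(7)$ stabilises $\langle P'_7\rangle$ and $E'$ has a rational $7$-isogeny; this settles the $p=7$ assertion. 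If $d=1$ then $E'(\mathbb{Q})[7]=C_7$, and if $d=2$ then (either because $E'(\mathbb{Q})[7]=C_7$ already, or by Lemma~\ref{Lemma 3.2}) some quadratic twist $E_\ast/\mathbb{Q}$ of $E'$ has $C_7\subseteq E_\ast(\mathbb{Q})$. In either case $E$ is a quadratic twist of a curve $E_\ast/\mathbb{Q}$ with $C_7\subseteq E_\ast(\mathbb{Q})$; were this twist nontrivial we would get $C_7\oplus C_7\subseteq E(K)[7]\oplus E_\ast(K)[7]$ and hence $\mathbb{Q}(\zeta_7)$ inside a field of degree $2p$ over $\mathbb{Q}$, impossible as $6\nmid 2p$ for $p\ne 3$. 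So the twist is trivial and $E$ is a base change, which is the first assertion.

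\emph{The case $p=3$.} Now $d\in\{1,2,3,6\}$. If $d=1$ then $E'(\mathbb{Q})[7]=C_7$, and if $d=2$ then as above some quadratic twist of $E'$ has a rational point of order $7$; since a rational $7$-isogeny is inherited by all quadratic twists of a curve, $E'$ has a rational $7$-isogeny in both cases. So assume $d\in\{3,6\}$. Then $\rho_{E',7}(\Gal(\overline{\mathbb{Q}}/K))$ has index dividing $3$ in $G_{E'}(7)$, fixes a line $\ell$, and acts on it through $\{\pm1\}$; hence the $G_{E'}(7)$-orbit of $\ell$ has size $1$, $2$ or $3$. If that orbit has size $1$ then $G_{E'}(7)$ fixes $\ell$ and $E'$ has a rational $7$-isogeny; otherwise $G_{E'}(7)$ fixes no line at all. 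In this last situation I would match the data — a subgroup of index dividing $3$ fixing a line and acting through $\{\pm1\}$, with $\det$ surjective and no $G_{E'}(7)$-fixed line — against the classification of possible images of $\rho_{E,7}$ for $E/\mathbb{Q}$ (the admissible subgroups of $\Gl_2(\mathbb{Z}/7\mathbb{Z})$ of \cite{19}, whose non-Borel members are the split and non-split Cartan subgroups, their normalizers, and a short list of exceptional groups). A finite check should leave only the group with Sutherland label $7Ns.2.1$, and show that the curves over $\mathbb{Q}$ with $G_{E'}(7)$ conjugate to it are exactly the quadratic twists of $\mathrm{2450.y1}$ and $\mathrm{2450.z1}$; a \MAGMA\ computation (factoring $f_{E',7}$ over a suitable cubic field, as in the division polynomial method) would then confirm that these curves really do admit a quadratic twist over a cubic field containing a point of order $7$, so the exception is genuine.

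Everything but the last case is routine quadratic-twist and Weil-pairing bookkeeping, reusing Lemmas~\ref{Lemma 3.2}--\ref{Lemma 3.5} essentially verbatim. I expect the main obstacle to be the $p=3$ case: the finite but delicate case distinction over the mod-$7$ image classification that singles out $7Ns.2.1$, and then pinning down and verifying the two exceptional curves.
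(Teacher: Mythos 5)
Your overall route is the same one the paper takes (its proof is literally ``the same as Lemma~\ref{Lemma 3.5}''): pass to $E'$, bound $[\mathbb{Q}(P'_7):\mathbb{Q}]$ by $[L:\mathbb{Q}]=2p$, use Lemma~\ref{Lemma 3.2} plus the Weil pairing to force base change when $6\nmid 2p$, and settle the remaining degrees by the classification of mod-$7$ images (the paper outsources this to \cite[Table 1]{2}). However, the step that does most of your work is wrong: $d=[\mathbb{Q}(P'_7):\mathbb{Q}]$ is the size of the $G_{E'}(7)$-orbit of $P'_7$ inside the $48$ points of order $7$, and orbit sizes need not divide the cardinality of the set acted on. Concretely, for the image $\bigl\{\left(\begin{smallmatrix}1&*\\0&*\end{smallmatrix}\right)\bigr\}$ the points off the fixed line have degree $42$, and for $\bigl\{\left(\begin{smallmatrix}a&*\\0&1\end{smallmatrix}\right)\bigr\}$ (resp.\ $\bigl\{\left(\begin{smallmatrix}a&*\\0&\pm1\end{smallmatrix}\right)\bigr\}$) they have degree $7$ (resp.\ $14$); none of $7,14,42$ divides $48$. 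So your intermediate conclusion ``$d\in\{1,2\}$'' is actually false exactly in the case $p=7$: taking $E'/\mathbb{Q}$ with the latter image and $K=\mathbb{Q}(P)$ of degree $7$ is an instance of the lemma with $d=7$. The $p=7$ assertion still holds, but by an argument you did not give: if $d\in\{7,14\}$ then $7$ divides $|G_{E'}(7)|$, so $G_{E'}(7)$ either lies in a Borel (whence the rational $7$-isogeny) or contains $\mathrm{SL}_2(\mathbb{Z}/7\mathbb{Z})$, whose orbit on order-$7$ points has size $48$, a contradiction. Likewise for $p=5$ and $p\ge 11$ the exclusion of $d=p,2p$ cannot come from $\gcd(2p,48)$; the cheap repair is that such an orbit would force $p\mid|\Gl_2(\mathbb{Z}/7\mathbb{Z})|=2^5\cdot3^2\cdot7$, impossible, or one simply cites \cite[Table 1]{2} as the paper does (and as you yourself do to exclude $d=4$ when $p=2$).

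Beyond that, the $p=3$ case -- which is where the actual content of the lemma lies -- is only promised, not performed: the finite check that among images with no stable line only $7\mathrm{Ns}.2.1$ admits an order-$7$ point of degree dividing $6$, and that this image occurs exactly for $\mathrm{2450.y1}$ and $\mathrm{2450.z1}$, is deferred to ``a finite check'' and a \texttt{Magma} computation. Deferring to the tables of \cite{2} is what the paper itself does, so this is acceptable as a plan, but note two points: your claim that the exceptional curves are ``exactly the quadratic twists of $\mathrm{2450.y1}$ and $\mathrm{2450.z1}$'' is too strong (only those two curves have mod-$7$ image $7\mathrm{Ns}.2.1$, which is precisely the equivalence in the statement; the other twists of that $j$-invariant have a larger image), and verifying that these curves really do acquire a point of order $7$ over a suitable field is not needed for the lemma -- the paper handles those two curves separately in the following lemma.
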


\begin{proof}
The proof is the same as in Lemma \ref{Lemma 3.5}.
\end{proof}

We will classify torsion growth of $E'$, if $E'$ has $\mathrm{LMFDB}$ label $\href{http://www.lmfdb.org/EllipticCurve/Q/2450ba1/}{\mathrm{2450.y1}}$ or $\href{http://www.lmfdb.org/EllipticCurve/Q/2450bd1/}{\mathrm{2450.z1}}$ separately.
\begin{lemma}
Let $E'/\mathbb{Q}$ be a curve with $\mathrm{LMFDB}$ label $\href{http://www.lmfdb.org/EllipticCurve/Q/2450ba1/}{\mathrm{2450.y1}}$ or $\href{http://www.lmfdb.org/EllipticCurve/Q/2450bd1/}{\mathrm{2450.z1}}$ and let $L$ be a number field such that $[L:\mathbb{Q}]=2p$, where $p$ is prime. Then $E'(L)_{tors} \in \{ C_1, C_2,  C_2 \oplus C_2, C_7 \}.$
\end{lemma}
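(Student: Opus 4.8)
The plan is to pin down $E'(L)_{tors}$ by controlling which primes can divide its order and then ruling out all composite possibilities. First I would observe that both candidate curves $\mathrm{2450.y1}$ and $\mathrm{2450.z1}$ have conductor $2450 = 2 \cdot 5^2 \cdot 7^2$, that they have no $\mathrm{CM}$, and that $G_{E'}(7)$ is conjugate to the subgroup labelled $7Ns.2.1$ (a normaliser-of-split-Cartan type group); in particular $E'$ has no rational $7$-isogeny but $E'(\mathbb{Q})_{tors}$ is small — I would record the actual value of $E'(\mathbb{Q})_{tors}$ from $\mathrm{LMFDB}$, which I expect to be trivial or $C_2$. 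Then, for a prime $q$ and a point $P_q \in E'(L)$ of order $q$, the field $\mathbb{Q}(P_q)$ satisfies $[\mathbb{Q}(P_q):\mathbb{Q}] \mid [L:\mathbb{Q}] = 2p$, so $[\mathbb{Q}(P_q):\mathbb{Q}] \in \{1,2,p,2p\}$. Invoking $R_{\mathbb{Q}}(2p) \subseteq \{2,3,5,7\}$ (this is contained in Lemma \ref{Lemma 3.1} together with the easy observation that for a prime $q \notin \{2,3,5,7\}$ the bounds of \cite[Corollary 6.1.]{2} force $q - 1$ or $(q^2-1)/3$ to divide $2p$, which fails exactly as in the proof of Lemma \ref{Lemma 3.1}), the order of $E'(L)_{tors}$ is only divisible by primes in $\{2,3,5,7\}$.

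Next I would eliminate $3$ and $5$. If $3 \mid \# E'(L)_{tors}$, then there is a point of order $3$ over a field of degree dividing $2p$; by \cite[Table 1]{2} the degree of the field of definition of a $3$-torsion point of an elliptic curve over $\mathbb{Q}$ is $1,2,3,4,6,8$, and the only such value dividing $2p$ for $p$ an odd prime (and dividing $2 \cdot 2 = 4$ when $p = 2$) is $1$ or $2$, so by Lemma \ref{Lemma 3.2} applied to $E'$ some quadratic twist of $E'$ would have a rational point of order $3$, forcing $E'$ to have a rational $3$-isogeny — but the curves of conductor $2450$ with label $\mathrm{2450.y1}$, $\mathrm{2450.z1}$ have no rational $3$-isogeny (their isogeny class consists only of $7$-isogenous curves, which I would confirm from \cite{18}). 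The same argument with $5$ in place of $3$, using Lemma \ref{Lemma 3.5}: since $[L:\mathbb{Q}] = 2p \ne 5$, if $C_5 \subseteq E'(L)$ then $E'$ has a rational $5$-isogeny, again contradicting the isogeny class of these curves. Hence $\# E'(L)_{tors}$ is of the form $2^a \cdot 7^b$.

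Now I would bound the $2$- and $7$-parts. For the $7$-part: if $C_7 \oplus C_7 \subseteq E'(L)$ then the Weil pairing gives $\mathbb{Q}(\zeta_7) \subseteq L$, impossible since $[\mathbb{Q}(\zeta_7):\mathbb{Q}] = 6 \nmid 2p$ for $p$ prime; and $C_{49} \subseteq E'(L)$ is excluded because a point of order $49$ lives in degree dividing $2p$, while the known degrees of $49$-torsion fields for curves over $\mathbb{Q}$ (or more simply, $E'$ having $7Ns.2.1$ image means $E'$ has no rational $7$-isogeny, a fortiori no rational $49$-isogeny, and one checks via the $7$-division / $49$-division polynomial argument of the ``Division polynomial method'' subsection that the smallest degree of a point of exact order $49$ exceeds $2p$ for all primes $p$ — here I would actually run the \MAGMA\ factorisation of $f_{E',49}$). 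So the $7$-part of $E'(L)_{tors}$ is at most $C_7$. For the $2$-part: here I would use the $2$-adic image of $E'$ as classified in \cite{31} — from \cite{18} the curves $\mathrm{2450.y1}$, $\mathrm{2450.z1}$ have a specific $2$-adic label, and using \cite{5}, \cite{36} one reads off the degrees of the fields of definition of their $2$-power torsion subgroups; the smallest degree in which $C_4$ or $C_2 \oplus C_4$ becomes defined will turn out not to divide $2p$ for any prime $p$ (equivalently not to be $1$, $2$, a prime, or twice a prime of the right shape), leaving only $C_1$, $C_2$, $C_2 \oplus C_2$ as possible $2$-parts. Finally I would rule out the mixed cases: $C_{14} = C_2 \oplus C_7$ and $C_2 \oplus C_{14}$ would require both a $2$-torsion and a $7$-torsion point over $L$, but then $E'$ would acquire a rational $7$-isogeny after a quadratic twist (apply Lemma \ref{Lemma 3.2} to the $7$-part once the field of definition of the $7$-point is seen to be degree $1$ or $2$ by \cite[Table 1]{2}), again contradicting $G_{E'}(7) = 7Ns.2.1$; and $C_2 \oplus C_2$ together with a $7$-torsion point is excluded the same way. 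This leaves exactly $E'(L)_{tors} \in \{C_1, C_2, C_2 \oplus C_2, C_7\}$.

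The main obstacle I anticipate is the $2$-part analysis: unlike the odd-prime cases, where the isogeny class of conductor $2450$ immediately kills rational $3$- and $5$-isogenies, the curves $\mathrm{2450.y1}$ and $\mathrm{2450.z1}$ may well have a rational $2$-isogeny, so I cannot dismiss $2$-power growth by an isogeny argument and must instead extract the precise entanglement data of the $2$-adic representation from \cite{31}, \cite{5}, \cite{36} and check case by case (including the interaction with the $7$-torsion, since the degree-$2p$ constraint couples the primes) that nothing beyond $C_2 \oplus C_2$ or $C_7$ survives. The secondary nuisance is making the ``$49$-torsion is too big'' step rigorous; I would settle it cleanly with the explicit division-polynomial computation in \MAGMA\ described in Section 2 rather than appeal to a classification of $49$-torsion fields.
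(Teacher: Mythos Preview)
Your approach diverges from the paper's, and the divergence hides real gaps. The paper's proof rests on one computational fact specific to these two curves: for every prime $q \neq 7$ one has $G_{E'}(q) = \Gl_2(\mathbb{Z}/q\mathbb{Z})$. Surjectivity forces $[\mathbb{Q}(P_q):\mathbb{Q}] = q^2 - 1$, and $q^2 - 1 \mid 2p$ with $p$ prime is only possible for $q = 2$, $p = 3$; the $7$-part and the residual degree-$6$ case are then dispatched by \cite[Table 1]{2} and the algorithm of \cite{1}. You never invoke this surjectivity, and your substitute arguments break precisely at the small primes the lemma must cover.

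Concretely: your claim that $R_{\mathbb{Q}}(2p) \subseteq \{2,3,5,7\}$ contradicts Lemma \ref{Lemma 3.1} itself, which records $11 \in R_{\mathbb{Q}}(10)$ and $13 \in R_{\mathbb{Q}}(6)$; you need a reason why \emph{these particular} curves have no $11$- or $13$-torsion over $L$ when $p \in \{3,5\}$, and that reason is surjectivity. Your $q = 3$ step asserts that among the possible degrees $\{1,2,3,4,6,8\}$ only $1,2$ divide $2p$ --- but for $p = 3$ the degrees $3$ and $6$ also divide $2p = 6$, so you cannot conclude a rational $3$-isogeny exists. Your invocation of Lemma \ref{Lemma 3.5} is for the wrong hypothesis (it concerns $E/K$ with $[K:\mathbb{Q}]$ an odd prime, not $E'/\mathbb{Q}$ over $L$ of degree $2p$). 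Finally, your mixed-case argument for $C_{14}$ claims the $7$-torsion point has degree $1$ or $2$, but for image $7Ns.2.1$ no $7$-torsion point is defined in degree $\leq 2$ --- indeed these curves are singled out precisely because they lack a rational $7$-isogeny. Each of these gaps is closed instantly once you use $G_{E'}(q) = \Gl_2(\mathbb{Z}/q\mathbb{Z})$ for $q \neq 7$, which also makes your planned $2$-adic image analysis and division-polynomial computation for $49$ unnecessary.
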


\begin{proof}
Let $q \ne 7$ be a prime and let $E'$ be any of these two curves. Then $G_{E'}(q)=\Gl_{2}(\mathbb{Z}/q\mathbb{Z})$. Therefore, a point $P_q$ of order $q$ on $E'$ satisfies $[\mathbb{Q}(P_q):\mathbb{Q}]=q^2-1$. If $P_q \in E'(L)$, then $[\mathbb{Q}(P_q):\mathbb{Q}]=q^2-1$ would divide $[L:\mathbb{Q}]=2p$. This implies that $q=2$ and $p=3$.
\\
Consider the case $q=7$. If $P_7 \in E'(L)$ is a point of order $7$, by \cite[Table 1]{2} we have $[L:\mathbb{Q}]=6$. Using the algorithm from \cite{1} we see that if $C_7 \subseteq E'(L)$, where $[L:\mathbb{Q}]=12$, then $C_7=E'(K)_{tors}$.
\end{proof}

From now on, assume that $E'$ is not one of these two curves, i.e. if $C_7 \subseteq E(K)$, then $E'$ will have a rational $7$-isogeny by Lemma \ref{Lemma 3.6}. 

\begin{lemma}
\label{Lemma 3.7}
Let $K$ be a number field such that $[K:\mathbb{Q}]$ is odd. Then there does not exist an elliptic curve $E/K$ with rational j-invariant such that $C_{16} \subseteq E(K)$.
\end{lemma}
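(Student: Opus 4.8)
The statement concerns $E/K$ with $[K:\mathbb{Q}]$ odd and $C_{16}\subseteq E(K)$. My plan is to reduce to a statement about $E'/\mathbb{Q}$ (a curve over $\mathbb{Q}$ with the same $j$-invariant) and its quadratic twists, exactly as in the general strategy described in the introduction. First, I would dispose of the CM case by the CM theorem above, so assume $E$ has no CM; and if $E$ is a base change from $\mathbb{Q}$ we would need $C_{16}\in\Phi_{\mathbb{Q}}(p)$, which fails by Theorems \ref{Theorem 2.1}, \ref{Theorem 2.5}, \ref{Theorem 2.7}, \ref{Theorem 2.8} and the more general fact that $16$ never occurs in odd prime degree (indeed Theorem 1.1 handles $p\ge 7$ and the classifications give $p=3,5$), so we may assume $E$ is a genuine quadratic twist of some $E'/\mathbb{Q}$, isomorphic over a quadratic extension $L$ of $K$ with $[L:\mathbb{Q}]=2p$.

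**Key steps.** Since $j(E)=j(E')\in\mathbb{Q}\setminus\{0,1728\}$ (the values $0,1728$ have CM) and $C_{16}\subseteq E(K)$, we get a point $P_{16}'\in E'(L)$ of order $16$, hence a point $P_2'\in E'(L)$ of order $2$, but more importantly a point of order $16$ forces (over $L$) a cyclic $16$-isogeny, so $G_{E'}(16)$ lands in a Borel-type subgroup once we control the intersection $\mathbb{Q}(E'[16])\cap K$. The cleanest route: the field $\mathbb{Q}(E'[16])$ has degree dividing $|\mathrm{GL}_2(\mathbb{Z}/16\mathbb{Z})| = 2^{13}\cdot 3$, which is even but its only odd prime factor is $3$; since $[K:\mathbb{Q}]=p$ is an odd prime, either $p=3$ or $\mathbb{Q}(E'[16])\cap K=\mathbb{Q}$. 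In the latter case Lemma \ref{Lemma 3.3} applies with $m=16$ and gives $G_{E'}(16)\subseteq B=\left\{\begin{bmatrix}\pm1 & *\\0 & *\end{bmatrix}\right\}$; in particular $E'$ has a rational cyclic $16$-isogeny and a rational point of order... well, at least $E'$ has a $16$-isogeny, which is permitted by Theorem \ref{Theorem 2.3}. That alone is not a contradiction, so the real work is to pin down which such $E'$ can acquire a point of order $16$ over a field of degree $2p$ with $p$ an odd prime.

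**Finishing the argument.** I would split on $p=3$ versus $p\ge 5$. For $p\ge 5$ (so $\gcd(2p,3)=1$ as well when $p\neq 3$): we have $C_{16}\subseteq E'(L)$ with $[L:\mathbb{Q}]=2p$. Now a point of order $16$ over $L$ gives, by Lemma \ref{Lemma 3.2} applied to the quadratic subfield (if a point of order $16$ is already defined over a quadratic field $F$), a twist $E''/\mathbb{Q}$ with $C_{16}\subseteq E''(\mathbb{Q})$, contradicting Mazur (Theorem \ref{Theorem 2.1}); so the point of order $16$ on $E'(L)$ is not defined over any proper subfield, meaning $[\mathbb{Q}(P_{16}'):\mathbb{Q}]=2p$. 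One then checks via \cite[Table 1]{2} (the $R_{\mathbb{Q}}$ and degree-of-point tables) and the $2$-adic image data of \cite{31},\cite{5},\cite{36} that the degree of the field of definition of a point of order $16$ on a rational elliptic curve is never of the form $2p$ for $p\ge 5$ prime — this uses that $16 \notin \Phi_{\mathbb{Q}}(d)$ for small even $d$ and that the relevant $2$-adic images only allow points of order $16$ in degrees divisible by $4$ or with specific small values. For $p=3$, $[L:\mathbb{Q}]=6$ and $16\notin\Phi_{\mathbb{Q}}(6)$ by Theorem \ref{Theorem 2.8}, so $C_{16}\not\subseteq E'(L)$ directly; combined with $C_{16}\subseteq E(K)\Rightarrow C_{16}\subseteq E'(L)$, this is the contradiction.

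**Main obstacle.** The delicate part is the case analysis on the field of definition of a $16$-torsion point of a rational elliptic curve having degree exactly $2p$: one must rule this out using the Rouse--Zureick-Brown classification together with the degree formulas of Gonz\'alez-Jim\'enez--Lozano-Robledo, since the Borel condition from Lemma \ref{Lemma 3.3} does not immediately forbid a rational $16$-isogeny. I expect the bulk of the proof to consist of checking, for each $2$-adic image compatible with a cyclic $16$-isogeny, that no point of exact order $16$ appears in degree $2p$ with $p$ an odd prime — and the cleanest uniform statement is that such a point always lives in a field whose degree is divisible by $4$ (or is small and already excluded by Mazur/Najman), which is incompatible with $2p$ for odd prime $p$.
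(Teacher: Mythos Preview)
Your proposal contains a genuine gap. The case $p=3$ is broken: you claim $C_{16}\notin\Phi_{\mathbb{Q}}(6)$, but Theorem~\ref{Theorem 2.8} explicitly lists $C_m$ for $m=1,\ldots,16$ (with only $m=11$ excluded), so in fact $C_{16}\in\Phi_{\mathbb{Q}}(6)$ and the implication ``$C_{16}\subseteq E(K)\Rightarrow C_{16}\subseteq E'(L)$'' gives no contradiction over a sextic $L$. Your treatment of $p\ge 5$ also has a problem: you invoke Lemma~\ref{Lemma 3.2} to pass from a quadratic point of order $16$ on $E'$ to a rational point of order $16$ on a twist, but that lemma is stated (and only valid in that form) for odd primes $p\ge 3$; the decomposition $E'(F)[p]\cong E'(\mathbb{Q})[p]\oplus (E')^d(\mathbb{Q})[p]$ fails for $2$-power torsion because $2$-torsion is twist-invariant. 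Concretely, there \emph{are} non-CM $E'/\mathbb{Q}$ with a point of order $16$ over a quadratic field and no twist having a rational point of order $16$, so this step cannot close the argument.

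The paper's proof is shorter and avoids both issues by exploiting one more piece of structure. From $4\nmid[L:\mathbb{Q}]$ one gets $4\nmid[\mathbb{Q}(P_{16}):\mathbb{Q}]$; the Rouse--Zureick-Brown/Gonz\'alez-Jim\'enez--Lozano-Robledo tables then force $G_{E'}(16)$ to be (conjugate to) one of exactly two groups, $H_{235l}$ or $H_{235m}$, both of order $256$ and with $[\mathbb{Q}(P_{16}):\mathbb{Q}]=2$. Since $|G_{E'}(16)|=256$ is a $2$-power and $[K:\mathbb{Q}]$ is odd, $\mathbb{Q}(E'[16])\cap K=\mathbb{Q}$ and Lemma~\ref{Lemma 3.3} yields $G_{E'}(16)\subseteq B$. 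But $|B|=2\cdot 16\cdot 8=256$ as well, so $G_{E'}(16)=B$; this is impossible because $-I\in B$ while $-I\notin H_{235l},H_{235m}$. This $-I$/order-matching contradiction is the key idea you are missing, and it works uniformly for all odd $[K:\mathbb{Q}]$ without any case split on $p$.
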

\begin{proof}
Assume the contrary, that $C_{16} \subseteq E(K)$. It follows that $C_{16} \subseteq E'(L)$ and let $P_{16}$ be a point of order $16$ in $E'(L)$. Since $[\mathbb{Q}(P_{16}):\mathbb{Q}]$ divides $[L:\mathbb{Q}]$ and $4$ does not divide $[L:\mathbb{Q}]$, we have that $[\mathbb{Q}(P_{16}):\mathbb{Q}]$ is not divisible by $4$. By the results of \cite{36}, we see that $G_{E'}(16) \in \{ \href{http://users.wfu.edu/rouseja/2adic/X235l.html}{H_{235l}}, \href{http://users.wfu.edu/rouseja/2adic/X235m.html}{H_{235m}} \}$ and $[\mathbb{Q}(P_{16}):\mathbb{Q}]=2$. In both cases we have $|G_{E'}(16)|=256$. By Lemma \ref{Lemma 3.3}, up to conjugacy we have $G_{E'}(16) \le B$ (where $B$ is a subgroup of upper triangular matrices defined in Lemma \ref{Lemma 3.3}). Since $|B|=256$, the equality holds. But $-I \notin G_{E}(16)$ and $-I \in B$, a contradiction. 
\end{proof}

\begin{lemma}
\label{Lemma 3.8}
Let $K$ be a number field such that $[K:\mathbb{Q}]$ is odd and let $E/K$ be an elliptic curve with rational j-invariant. Then $E(K)$ can't contain $C_2 \oplus C_{12}.$
\end{lemma}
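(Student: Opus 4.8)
The plan is to argue by contradiction, so suppose $C_{2}\oplus C_{12}\subseteq E(K)$. Then $E(K)$ contains a point of order $4$, a point of order $3$, and all of $E[2]$. We may assume $E$ has no $\mathrm{CM}$. Following the general strategy, fix $E'/\mathbb{Q}$ with $j(E')=j(E)$ and a quadratic extension $L/K$ over which $E$ and $E'$ become isomorphic, so that $C_{2}\oplus C_{12}\subseteq E'(L)$ and $[L:\mathbb{Q}]=2[K:\mathbb{Q}]\equiv 2\pmod 4$. A basic reduction comes first: the quadratic twist relating $E$ and $E'$ is trivial modulo $2$, so $\rho_{E,2}$ and the restriction of $\rho_{E',2}$ to $\Gal(\overline{K}/K)$ agree; hence $E[2]\subseteq E(K)$ forces $\mathbb{Q}(E'[2])\subseteq K$, and since $[K:\mathbb{Q}]$ is odd while $[\mathbb{Q}(E'[2]):\mathbb{Q}]$ divides $6$, we obtain $[\mathbb{Q}(E'[2]):\mathbb{Q}]\in\{1,3\}$, i.e.\ $G_{E'}(2)$ is either trivial or cyclic of order $3$.

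Next I would settle the case $3\nmid[K:\mathbb{Q}]$ (which includes $[K:\mathbb{Q}]\ge 5$ prime), where $[K:\mathbb{Q}]$ is coprime to $6$. The $K$-rational point of order $3$ together with the Weil pairing shows $\rho_{E,3}(\Gal(\overline K/K))\subseteq B_{0}$, hence $\rho_{E',3}(\Gal(\overline K/K))\subseteq B$; since the index of this image in $G_{E'}(3)$ divides $[K:\mathbb{Q}]$ while $|G_{E'}(3)|$ divides $|\Gl_{2}(\mathbb{Z}/3\mathbb{Z})|=48$, that index is $1$, so $G_{E'}(3)\le B$ and $E'$ has a rational $3$-isogeny. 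Similarly $C_2\oplus C_4\subseteq E(K)$ traps $\rho_{E,4}(\Gal(\overline K/K))$ in the pointwise stabiliser (of order $4$) of a copy of $C_2\oplus C_4$ inside $E[4]$, and after twisting and the same index argument (using $|\Gl_{2}(\mathbb{Z}/4\mathbb{Z})|=96$) one finds $G_{E'}(4)$ inside a group of order at most $8$ reducing to the identity modulo $2$; thus $E'$ has full rational $2$-torsion and a rational $4$-isogeny. It follows that the order-$4$ and order-$3$ points of $E'$ arising from $P'_{12}\in E'(L)$ are defined over fields of degree dividing $2[K:\mathbb{Q}]$ which, by these two constraints, have degree at most $2$; hence $E'$ has a point of order $12$ over a field $F$ with $[F:\mathbb{Q}]\le 2$, and, $E'$ having full rational $2$-torsion, $C_2\oplus C_{12}\subseteq E'(F)$. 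If $F=\mathbb{Q}$ this contradicts Theorem \ref{Theorem 2.1}. If $[F:\mathbb{Q}]=2$, it is a priori allowed by Theorem \ref{Theorem 2.4}, so I would finish by enumerating the finitely many $j$-invariants realising $C_2\oplus C_{12}$ over a quadratic field and, for each, applying the division polynomial method: the primitive $12$-division polynomial of a model with such $j$-invariant has no irreducible factor of odd degree, so a point of exact order $12$ cannot be defined over $K$.

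The main obstacle is the case $[K:\mathbb{Q}]=3$ (more generally $3\mid[K:\mathbb{Q}]$): then $L$ has degree $6$, and by Theorem \ref{Theorem 2.8} a group containing $C_2\oplus C_{12}$ really does occur over sextic fields, so no counting suffices, the index arguments above having required $[K:\mathbb{Q}]$ coprime to $6$. The leverage that survives is that $\mathbb{Q}(E'[2])\subseteq K$ still has degree $1$ or $3$ and never $2$, which excludes the families attaining $C_2\oplus C_{12}$ over a sextic field through a quadratic $2$-division field. If $[\mathbb{Q}(E'[2]):\mathbb{Q}]=3$ then $K=\mathbb{Q}(E'[2])$ is a cyclic cubic field, a strong restriction; if $\mathbb{Q}(E'[2])=\mathbb{Q}$ then $E'$ has full rational $2$-torsion and one again tries to push the order-$4$ and order-$3$ points of $E'$ down to a field of degree at most $2$. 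In both sub-cases I expect to combine the $2$-adic image classification of \cite{31} with the field-of-definition data of \cite{5} and \cite{36}, and the division polynomial method applied to the finitely many remaining candidate $j$-invariants, to check exhaustively that no such $E$ exists; this exhaustive treatment of the degree-$3$ case is where I expect the real work to lie.
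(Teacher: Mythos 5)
Your argument in the case $\gcd([K:\mathbb{Q}],6)=1$ actually reaches the same intermediate facts as the paper (full rational $2$-torsion on $E'$, a rational $3$-isogeny and a rational $4$-isogeny), but then you stop one step short of the contradiction and take a detour that does not work. From those facts the conclusion is immediate: letting $\langle P_3\rangle$, $\langle P_4\rangle$ be the kernels of the $3$- and $4$-isogenies and $Q_2$ a rational $2$-torsion point with $\{2P_4,Q_2\}$ a basis of $E'[2]$, the independent subgroups $\langle P_3+P_4\rangle$ and $\langle Q_2\rangle$ show that $E'$ is $\mathbb{Q}$-isogenous to a curve with a rational cyclic $24$-isogeny, contradicting Theorem \ref{Theorem 2.3}. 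Instead, you descend only to ``$C_2\oplus C_{12}\subseteq E'(F)$ with $[F:\mathbb{Q}]\le 2$'' and propose to finish by enumerating ``the finitely many $j$-invariants realising $C_2\oplus C_{12}$ over a quadratic field''. That finiteness claim is false: any of the infinitely many $E/\mathbb{Q}$ with $E(\mathbb{Q})_{tors}\cong C_{12}$ (a genus-zero family) whose third $2$-torsion point is irrational acquires $C_2\oplus C_{12}$ over its quadratic $2$-division field. So the proposed final step cannot be carried out as stated, even though the constraints you had already derived suffice for the Kenku-type contradiction.

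The more serious gap is the case $3\mid[K:\mathbb{Q}]$, in particular $[K:\mathbb{Q}]=3$, which is exactly the case the lemma is needed for later in the paper; there you offer only a programme (``combine the $2$-adic classification, field-of-definition data and division polynomials'') rather than a proof, and your index arguments genuinely fail since they require $[K:\mathbb{Q}]$ coprime to $6$. The paper avoids the case split entirely: $G_{E'}(4)$ cannot be surjective (else a point of order $4$ on $E'$ would force $4\mid[L:\mathbb{Q}]$), so by Morrow's theorem \cite{4} it lies in one of the order-$16$ groups $H_9,\dots,H_{13}$; hence $[\mathbb{Q}(E'[4]):\mathbb{Q}]$ is a power of $2$ and $\mathbb{Q}(E'[4])\cap K=\mathbb{Q}$ for \emph{every} odd-degree $K$. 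Then twist-invariance of $2$-torsion gives $\mathbb{Q}(E'[2])=\mathbb{Q}$, Lemma \ref{Lemma 3.3} gives the rational $4$-isogeny, Table 1 of \cite{2} (degrees of order-$3$ points dividing $2\cdot\mathrm{odd}$) gives the rational $3$-isogeny, and the $24$-isogeny contradiction closes the argument uniformly. The missing ideas in your proposal are precisely this device for making the mod-$4$ step independent of divisibility by $3$, and the $24$-isogeny punchline; without them the lemma is not proved in the cubic (and more generally $3\mid[K:\mathbb{Q}]$) case.
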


\begin{proof}
Assume that $C_2 \oplus C_{12} \subseteq E(K)$. This implies $C_2 \oplus C_{12} \subseteq E'(L)$ and $[L:\mathbb{Q}]$ is not divisible by $4$. By \cite[Table 1]{2} we see that $E'$ has a rational $3$-isogeny and denote by $\langle P_3 \rangle$ the kernel of such isogeny. Obviously $G_{E'}(4)$ is not surjective, because otherwise a point $P_4$ of order $4$ on $E'$ would be defined over degree $12$ extension of $\mathbb{Q}$, so $12$ would divide $[L:\mathbb{Q}]$. By \cite[Theorem]{4}, we have that $G_{E'}(4) \subseteq H_i$, where $i \in \{ 9, 10, 11, 12, 13 \}$. Since $|H_i|=16$, for $i \in \{ 9, 10, 11, 12, 13 \}$ it follows that $[\mathbb{Q}(E'[4]):\mathbb{Q}]=|G_{E'}(4)|$ is a power of $2$. This implies that $\mathbb{Q}(E'[4]) \cap K=\mathbb{Q}$. Since having a $2$-torsion is twist invariant property and $E(K)[2]=C_2 \oplus C_2$, we have $E'(K)[2]=C_2 \oplus C_2$. We now see that $\mathbb{Q}(E'[2]) \subseteq \mathbb{Q}(E'[4]) \cap K=\mathbb{Q}$, so $\mathbb{Q}(E[2])=\mathbb{Q}$. Since $C_4 \subseteq E(K)$, by lemma \ref{Lemma 3.3}, $E'$ has a rational $4$-isogeny. Let $P_4$ be a kernel of that isogeny and $Q_2$ be such that $\{ 2P_4, Q_2 \}$ is a basis for $E[2]$. The subroups $\langle P_3+P_4 \rangle$ and $\langle Q_2 \rangle$ are kernels of independent $12$ and $2$-isogenies, so $E'$ is isogenous over $\mathbb{Q}$ to a curve $E''/\mathbb{Q}$ with a rational $24$-isogeny, which is impossible by Theorem \ref{Theorem 2.3}.
\end{proof} 

\begin{lemma}
\label{Lemma 3.9}
Let $[K:\mathbb{Q}]=p$ be a prime and let $E/K$ be an elliptic curve with rational j-invariant. Then $E(K)$ can't contain $C_{27}$.
\end{lemma}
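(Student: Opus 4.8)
The plan is to follow the same twisting strategy used throughout this section. Suppose for contradiction that $C_{27} \subseteq E(K)$ with $[K:\mathbb{Q}]=p$ prime and $j(E) \in \mathbb{Q}$. If $E$ is a base change of a curve over $\mathbb{Q}$, then $C_{27} \subseteq E(K)_{tors} \in \Phi_{\mathbb{Q}}(p)$; consulting Theorems \ref{Theorem 2.1}, \ref{Theorem 2.5}, \ref{Theorem 2.6}, \ref{Theorem 2.7}, \ref{Theorem 2.8} and the fact that $\Phi_{\mathbb{Q}}(p)=\Phi(1)$ for $p \ge 7$, we see $C_{27}$ never occurs, so we may assume $E$ is not such a base change. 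Then there is an elliptic curve $E'/\mathbb{Q}$ with $j(E')=j(E)$, not isomorphic to $E$ over $K$, and a quadratic extension $L/K$ over which $E$ and $E'$ become isomorphic, so that $C_{27} \subseteq E'(L)$. Let $P_{27} \in E'(L)$ be a point of order $27$ and $P_3 \in E'(L)$ a point of order $3$ (a multiple of $P_{27}$); then $[\mathbb{Q}(P_{27}):\mathbb{Q}]$ and $[\mathbb{Q}(P_3):\mathbb{Q}]$ both divide $[L:\mathbb{Q}]=2p$.

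Next I would split on the parity of $p$. If $p$ is odd, then $2p$ is not divisible by $4$, and I would invoke the description of degrees of points of order $27$ on curves over $\mathbb{Q}$: by \cite{2} (Table 1 / Corollary 6.1 there), a point of order $27$ on an elliptic curve over $\mathbb{Q}$ is defined over a field of degree divisible by a quantity incompatible with $2p$ for odd prime $p$ — concretely, $27 \notin R_{\mathbb{Q}}(2p)$ for odd $p$ (note $\phi(27)=18$, so $27$-torsion already forces large degree, and $2p$ being twice an odd prime leaves no room). This immediately kills the odd case. The remaining case is $p=2$, so $[L:\mathbb{Q}]=4$; here I would use $27 \notin R_{\mathbb{Q}}(4)$, which follows from Theorem \ref{Theorem 2.6} (no curve over $\mathbb{Q}$ acquires a point of order $27$ over a quartic field), again a contradiction. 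Alternatively, and more uniformly, I would argue via isogenies: since $C_3 \subseteq E(K)$ and $\mathbb{Q}(E'[3])$ has degree dividing $|\mathrm{GL}_2(\mathbb{Z}/3\mathbb{Z})|=48$, which is coprime to $p$ for $p \geq 5$, Lemma \ref{Lemma 3.3} forces $E'$ to have a rational $3$-isogeny; pushing this up, one shows $E'$ would need a rational $27$-isogeny contradicting nothing directly, so the degree-of-point argument is cleaner and I would lead with that.

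The main obstacle is the case $p=2$, $p=3$, and $p=5$, where $2p \in \{4,6,10\}$ is small enough that one cannot dismiss $27$-torsion purely by the crude degree bound $\phi(27)=18$; there the finer input of Theorems \ref{Theorem 2.6}, \ref{Theorem 2.5}, \ref{Theorem 2.7} (classifying torsion of rational elliptic curves over quartic, cubic, quintic fields) together with the behaviour over $L$ of degree $2p$ is needed. For $p=3$ one has $[L:\mathbb{Q}]=6$ and must check $27 \notin R_{\mathbb{Q}}(6)$, which is contained in Lemma \ref{Lemma 3.1}; for $p=5$ one has $[L:\mathbb{Q}]=10$ and uses $R_{\mathbb{Q}}(10)=\{2,3,5,7,11\}$ from Lemma \ref{Lemma 3.1}; for $p=2$ one uses Theorem \ref{Theorem 2.6}. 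In every case $27$ fails to appear, completing the contradiction and hence the proof.
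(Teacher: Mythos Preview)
Your proposal has a genuine gap, and ironically you brush past the paper's actual argument while dismissing it.

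First, your repeated appeals to $R_{\mathbb{Q}}(2p)$ are misplaced: by definition $R_{\mathbb{Q}}(d)$ is a set of \emph{primes}, so ``$27 \notin R_{\mathbb{Q}}(2p)$'' is vacuously true and says nothing about whether a point of order $27$ can appear. In particular, $3 \in R_{\mathbb{Q}}(2p)$ for every $p$, so the relevant prime is present. Your degree-of-point argument for odd $p$ is never made precise; you assert the degree is ``divisible by a quantity incompatible with $2p$'' without saying what that quantity is, and in fact no such uniform lower bound exists (a curve with a rational $27$-isogeny has a $27$-torsion point over a field of degree dividing $18$).

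Second, and more importantly, you do reach the right place when you note that for $p \ge 5$ Lemma~\ref{Lemma 3.3} (equivalently Lemma~\ref{Lemma 3.4}) forces $E'$ to have a rational $27$-isogeny --- but then you write ``contradicting nothing directly'' and abandon the approach. This is exactly where the paper finishes: a rational $27$-isogeny over $\mathbb{Q}$ occurs only for a single CM $j$-invariant (the non-cuspidal rational points on $X_0(27)$ are all CM), so $E'$ and hence $E$ would have CM, contradicting the standing non-CM hypothesis. That is the entire proof for $p \ge 5$.

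For $p=3$ the paper argues just as you do in spirit (pass to $L$ of degree $6$ and invoke $C_{27}\notin\Phi_{\mathbb{Q}}(6)$ from Theorem~\ref{Theorem 2.8}), though again you phrase it via $R_{\mathbb{Q}}(6)$, which is the wrong object. For $p=2$ the statement is immediate from $C_{27}\notin\Phi(2)$ (Theorem~\ref{Theorem 2.2}), so no twisting is needed.
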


\begin{proof}
If $p \ge 5$, by Lemma \ref{Lemma 3.4}, $E'$ has a rational $27$-isogeny, so it has $\mathrm{CM}$ and so $E$ has $\mathrm{CM}$ as well, which contradicts our assumption.
%Assume first that $p \ge 5$. Let $P_{27}$ be a point of order $27$ on $E'(L)$. We have that $[\mathbb{Q}(P_{27}):\mathbb{Q}]$ divides $[L:\mathbb{Q}]=2p$. On the other hand, by \cite[Proposition 4.6.]{2} 
%\[ [\mathbb{Q}(P_{27}):\mathbb{Q}]=[\mathbb{Q}(P_{27}):\mathbb{Q}(3P_{27})][\mathbb{Q}(3P_{27}):\mathbb{Q}(9P_{27})][\mathbb{Q}(9P_{27}):\mathbb{Q}] \] divides $18^3$, and since $gcd(18^3,2p)=2$, we have $[\mathbb{Q}(P_{27}):\mathbb{Q}] \in \{ 1, 2 \}$. But this is impossible since $C_{27} \notin \Phi_{\mathbb{Q}}(2)$ by Theorem \ref{Theorem 2.4}.
On the other hand, if $p=3$ then $E'(L)$ would contain $C_{27}$, so $C_{27} \in \Phi_{\mathbb{Q}}(6)$, which isn't true by Theorem \ref{Theorem 2.8}.
\end{proof}

\begin{lemma}
\label{Lemma 3.10}
Let $[K:\mathbb{Q}]=p \ge 5$ be a prime and let $E/K$ be an elliptic curve with rational j-invariant. Then $E(K)$ can't contain $C_{18}$.
\end{lemma}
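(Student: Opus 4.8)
The plan is to assume $C_{18}\subseteq E(K)$ and derive a contradiction by transferring the $18$-torsion information to an elliptic curve $E'/\mathbb{Q}$ with $j(E')=j(E)$. Since $E$ has no $\mathrm{CM}$, $j(E)\notin\{0,1728\}$, so such an $E'$ exists and $E$ is a quadratic twist of the base change of $E'$ to $K$; all the hypotheses of Lemma \ref{Lemma 3.3} with $m=18$ are then immediate except for the intersection condition $\mathbb{Q}(E'[18])\cap K=\mathbb{Q}$.

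The key step is to verify that intersection condition. By the Chinese Remainder Theorem $\Gl_{2}(\mathbb{Z}/18\mathbb{Z})\cong\Gl_{2}(\mathbb{Z}/2\mathbb{Z})\times\Gl_{2}(\mathbb{Z}/9\mathbb{Z})$, and using $|\Gl_{2}(\mathbb{Z}/2\mathbb{Z})|=6$ together with $|\Gl_{2}(\mathbb{Z}/9\mathbb{Z})|=2^4\cdot 3^5$ (as in Lemma \ref{Lemma 3.4}), the order of $\Gl_{2}(\mathbb{Z}/18\mathbb{Z})$ is a product of powers of $2$ and $3$. Hence $[\mathbb{Q}(E'[18]):\mathbb{Q}]=|G_{E'}(18)|$ is coprime to $p=[K:\mathbb{Q}]\geq 5$, so $[\mathbb{Q}(E'[18])\cap K:\mathbb{Q}]$ divides $\gcd(|G_{E'}(18)|,p)=1$, i.e. $\mathbb{Q}(E'[18])\cap K=\mathbb{Q}$.

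Now Lemma \ref{Lemma 3.3} (with $m=18$) shows $G_{E'}(18)$ is conjugate to a subgroup of $B\subseteq\Gl_{2}(\mathbb{Z}/18\mathbb{Z})$. Consequently the line spanned by the first basis vector of $E'[18]$ is $\Gal(\overline{\mathbb{Q}}/\mathbb{Q})$-stable, i.e. $E'$ has a $\mathbb{Q}$-rational cyclic $18$-isogeny; a generator $P_{18}$ of its kernel has exact order $18$, and since every matrix in $B$ has top-left entry $\pm 1$, the Galois group acts on $P_{18}$ through a character valued in $\{\pm 1\}$. Therefore $P_{18}$ is defined over a number field $F$ with $[F:\mathbb{Q}]\leq 2$, so $C_{18}\subseteq E'(F)_{tors}$. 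Since $C_{18}$ is not a subgroup of any group appearing in Theorem \ref{Theorem 2.1} (when $F=\mathbb{Q}$) or in Theorem \ref{Theorem 2.4} (when $[F:\mathbb{Q}]=2$), this is a contradiction. In the quadratic case one could instead pass to the appropriate quadratic twist of $E'$ to obtain a $\mathbb{Q}$-rational point of order $18$, contradicting Theorem \ref{Theorem 2.1}, exactly as in the proof of Lemma \ref{Lemma 3.2}.

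The only real obstacle is the intersection computation $\mathbb{Q}(E'[18])\cap K=\mathbb{Q}$, and this is exactly where the hypothesis $p\geq 5$ is needed: for $p=3$ it fails, and indeed $C_{18}\in\Phi_{\mathbb{Q}}(3)$ by Theorem \ref{Theorem 2.5}, so the hypothesis $p\ge 5$ cannot be dropped. Everything else -- the shape of $B$, the reduction to a point of order $18$ defined over a field of degree at most $2$, and the incompatibility with the known torsion classifications over $\mathbb{Q}$ and over quadratic fields -- is routine.
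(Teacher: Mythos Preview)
Your proof is correct and takes a cleaner route than the paper's. Both arguments end at the same contradiction---a point of order $18$ on $E'$ defined over a field of degree at most $2$, which is ruled out by Theorem~\ref{Theorem 2.4}---but they reach it differently. The paper first invokes Lemma~\ref{Lemma 3.4} to get a rational $2$-torsion point $P_2$ on $E'$, and then handles the $9$-part separately: it takes a point $P_9\in E'(L)$ of order $9$, cites \cite[Proposition 4.6]{2} to bound $[\mathbb{Q}(P_9):\mathbb{Q}]$ by a divisor of $18^2$, intersects with $2p$ to force $[\mathbb{Q}(P_9):\mathbb{Q}]\le 2$, and then adds $P_2$ back in. You instead apply Lemma~\ref{Lemma 3.3} directly with $m=18$, using the same order computation that drives Lemma~\ref{Lemma 3.4} to verify $\mathbb{Q}(E'[18])\cap K=\mathbb{Q}$, and then exploit the full strength of the conclusion of Lemma~\ref{Lemma 3.3}: not merely that $E'$ has a rational $18$-isogeny, but that Galois acts on a generator of its kernel through $\{\pm 1\}$. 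This single step replaces both the $2$-isogeny argument and the appeal to \cite[Proposition 4.6]{2}, so your proof is self-contained within the paper's own lemmas and avoids the external reference.
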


\begin{proof}
Since $E'$ has a rational $2$-isogeny, it has a rational point $P_2$ of order $2$. Let $P_9 \in E'(L)$ be a point of order $9$. We have that $[\mathbb{Q}(P_9):\mathbb{Q}]$ divides $[L:\mathbb{Q}]=2p$. On the other hand, by \cite[Proposition 4.6.]{2} we have that $[\mathbb{Q}(P_9):\mathbb{Q}(3P_9)][\mathbb{Q}(3P_9):\mathbb{Q}]$ divides $18^2$, and since $gcd(18^2,2p)=2$, we have $[\mathbb{Q}(P_9):\mathbb{Q}] \in \{ 1, 2\}$. We conclude that $P_9$ is defined over an at most quadratic extension of $\mathbb{Q}$ and since $P_2 \in E(\mathbb{Q})$, the point $P_2 + P_9$ of order $18$ on $E$ is defined over a quadratic number field, which is imossible since $C_{18} \notin \Phi_{\mathbb{Q}}(2)$, by Theorem \ref{Theorem 2.4}.
\end{proof}

\section{$[K:\mathbb{Q}]=p$, $p \ge 11$}
In this subsection, let $K$ denote a number field such that $[K:\mathbb{Q}]=p$, where $p \ge 11$ is a prime number.
\\
If $P_n \in E(K)$ is a point of order $n$ then $C_n \subseteq E'(L)$. By Lemma \ref{Lemma 3.1} we only need to consider those integers $n$ whose prime factors are contained in $R_{\mathbb{Q}}(2p)=\{ 2, 3, 5, 7 \}$.
\begin{theorem}
Let $K$ be a number field such that $[K:\mathbb{Q}]=p \ge 11$ is prime and let $E/K$ be an elliptic curve with rational $j$-invariant. If $E(K)$ contains a point of order $n > 1$, then $C_{n} \in \Phi(1)$.
\end{theorem}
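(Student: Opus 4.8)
The plan is to reduce quickly to the case that $E$ is non-CM and is not the base change of a curve defined over $\mathbb{Q}$, and then to bound the prime-power divisors of $n$ one prime at a time. First, if $E$ has CM, then $E(K)_{tors} \in \Phi_{\mathbb{Q}}(p)$ by the result above on CM elliptic curves of rational $j$-invariant over prime-degree fields, and $\Phi_{\mathbb{Q}}(p)=\Phi(1)$ for every prime $p \ge 7$ by \cite{2}, so there is nothing to prove; likewise, if $E$ is the base change of some $E_0/\mathbb{Q}$, then $E(K)=E_0(K)$ with $E_0(K)_{tors} \in \Phi_{\mathbb{Q}}(p)=\Phi(1)$. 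So I would henceforth assume $E$ is non-CM and not a base change; then $j(E) \in \mathbb{Q}\setminus\{0,1728\}$, so I can fix $E'/\mathbb{Q}$ with $j(E')=j(E)$ together with a quadratic field $L \supseteq K$ over which $E$ and $E'$ become isomorphic. If $P_n \in E(K)$ has order $n$, then $C_n \subseteq E'(L)$ and $[L:\mathbb{Q}]=2p$.

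The second step is to determine the primes dividing $n$. Every prime $q \mid n$ gives a point of order $q$ on $E'(L)$, so $q \in R_{\mathbb{Q}}(2p)$, and by Lemma~\ref{Lemma 3.1} (note $2p \ge 22$) this set equals $\{2,3,5,7\}$. Now $5 \nmid n$: otherwise $C_5 \subseteq E(K)$, and Lemma~\ref{Lemma 3.5} (applicable since $p \ne 5$ is an odd prime) would force $E$ to be a base change, contradicting our reduction; similarly $7 \nmid n$ by Lemma~\ref{Lemma 3.6} (using $p \ne 3,7$). Hence $n=2^k 3^l$ for some $k,l \ge 0$. By Lemma~\ref{Lemma 3.4} (valid because $p \ge 11 > 5$ is prime), $E'$ then admits a rational $2^k 3^l$-isogeny, so Theorem~\ref{Theorem 2.3} forces $n=2^k 3^l \in \{1,2,3,4,6,8,9,12,16,18,27\}$.

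The last step is to discard the three exceptional values in that list: $n=16$ is impossible since $K$ has odd degree (Lemma~\ref{Lemma 3.7}), $n=27$ is impossible by Lemma~\ref{Lemma 3.9}, and $n=18$ is impossible by Lemma~\ref{Lemma 3.10} (applicable as $p \ge 5$). What remains is $n \in \{1,2,3,4,6,8,9,12\}$, and for each of these $C_n$ occurs in Mazur's classification (Theorem~\ref{Theorem 2.1}), i.e. $C_n \in \Phi(1)$, which is the claim.

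As for difficulty, once the earlier lemmas are available there is no serious obstacle and the argument is essentially bookkeeping. The real work is hidden in Lemma~\ref{Lemma 3.1}, which shrinks the relevant set of primes to $\{2,3,5,7\}$, and in Lemmas~\ref{Lemma 3.5} and~\ref{Lemma 3.6}, whose point is that $5$-torsion or $7$-torsion in $E(K)$ forces $E$ to be a base change; these are exactly what let us discard the primes $5$ and $7$ and end up inside Mazur's list.
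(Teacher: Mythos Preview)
Your proof is correct and follows essentially the same route as the paper's: restrict the prime divisors of $n$ to $\{2,3,5,7\}$ via Lemma~\ref{Lemma 3.1}, dispose of $5$ and $7$ using Lemmas~\ref{Lemma 3.5} and~\ref{Lemma 3.6}, force a rational $2^k3^l$-isogeny on $E'$ via Lemma~\ref{Lemma 3.4} and Theorem~\ref{Theorem 2.3}, and then eliminate $n\in\{16,18,27\}$ with Lemmas~\ref{Lemma 3.7}, \ref{Lemma 3.9}, \ref{Lemma 3.10}. The only cosmetic difference is that you reduce up front to the non-CM, non-base-change case and treat $5\mid n$ or $7\mid n$ as a contradiction, whereas the paper leaves those cases in play and simply observes they fall into $\Phi_{\mathbb{Q}}(p)=\Phi(1)$; the logic is the same.
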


\begin{proof}
Assume that $E(K)$ contains a point $P_n$ of order $n$. By Lemma \ref{Lemma 3.1}, prime factors of $n$ are contained in $\{ 2, 3, 5, 7 \}$. Therefore, write $n=2^{a}3^{b}5^{c}7^{d}$, where $a,b,c,d \ge 0$. If $(c,d) \neq (0,0)$, then by Lemma \ref{Lemma 3.5} and Lemma \ref{Lemma 3.6} we have that $E$ is a base change of an elliptic curve defined over $\mathbb{Q}$, so the claim holds by \cite[Corollary 7.3.]{2}. Consider now the case when $c=d=0$. By Lemma \ref{Lemma 3.4}, $E'$ has a rational $2^{a}3^{b}$-isogeny. By Theorem \ref{Theorem 2.3} we have \[ n=2^{a}3^{b} \in \{ 1,2,3,4,6,8,9,12,16,18,27 \} .\] Among these values of $n$, we have $C_n \notin \Phi(1)$ only for $n \in \{ 16, 18, 27 \}$. But each of these cases is impossible by Lemma \ref{Lemma 3.7}, Lemma \ref{Lemma 3.10} and Lemma \ref{Lemma 3.9}.
\end{proof}

\section{$[K:\mathbb{Q}]=7$}
In this subsection, let $K$ denote a number field such that $[K:\mathbb{Q}]=7$.
\\
If $P_n \in E(K)$ is a point of order $n$ then $C_n \subseteq E'(L)$. By Lemma \ref{Lemma 3.1} we only need to consider those integers $n$ whose prime factors are contained in $R_{\mathbb{Q}}(14)=\{ 2, 3, 5, 7\}$.

\begin{lemma}
\label{Lemma 5.1}
Let $E/K$ be an elliptic curve with rational j-invariant. Then $E(K)$ can't contain $C_{49}$, $C_{21}$ or $C_{14}$.
\end{lemma}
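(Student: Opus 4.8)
The plan is to treat the three groups in parallel, reducing each to a torsion statement over $\mathbb{Q}$, over a quadratic field, or over a field of degree $7$, and then to invoke Theorems \ref{Theorem 2.1}, \ref{Theorem 2.4} and (for isogenies) \ref{Theorem 2.3}; recall that $\Phi_{\mathbb{Q}}(7)=\Phi(1)$ by \cite{2}, so over a degree-$7$ field $E'/\mathbb{Q}$ has only Mazur's torsion groups. We may assume $E$ is non-CM (so $j(E)\neq 0,1728$) and is not a base change from $\mathbb{Q}$: a base change would have $E(K)_{tors}\in\Phi_{\mathbb{Q}}(7)$, and none of $C_{14},C_{21},C_{49}$ lies there. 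Choose $E'/\mathbb{Q}$ with $j(E')=j(E)$ and a quadratic extension $L$ of $K$, $[L:\mathbb{Q}]=14$, over which $E\cong E'$; then $C_n\subseteq E'(L)$ for the $n$ at hand. Since $C_7\subseteq E(K)$ in every case, Lemma \ref{Lemma 3.6} shows $E'$ has a rational $7$-isogeny; fix a kernel $D\subseteq E'[7]$ and let $\phi\colon E'\to E'':=E'/D$ be the associated $\mathbb{Q}$-isogeny, so that $\bar D:=\phi(E'[7])$ is again a $\mathbb{Q}$-rational line of $E''$ and $\phi$ is an isomorphism on prime-to-$7$ torsion.

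\emph{The cases $C_{14}$ and $C_{21}$.} Since quadratic twisting leaves the mod-$2$ representation unchanged, $E'(K)[2]=E(K)[2]\supseteq C_2$, and as $[K:\mathbb{Q}]=7$ is coprime to every orbit length $1,2,3$ of a nonzero $2$-torsion point we get $C_2\subseteq E'(\mathbb{Q})$, hence $C_2\subseteq E''(\mathbb{Q})$. For $C_{21}$: a point $Q_3\in E'(L)$ of order $3$ has $[\mathbb{Q}(Q_3):\mathbb{Q}]$ dividing $14$ and equal to a Galois orbit length in $E'[3]\setminus\{O\}$, hence equal to $1$ or $2$; if $E'(\mathbb{Q})[3]=\{O\}$, Lemma \ref{Lemma 3.2} produces a quadratic twist of $E'$ with a rational point of order $3$, and replacing $E'$ by it (and $L$ by the corresponding quadratic extension of $K$, again of degree $14$) we may assume $C_3\subseteq E'(\mathbb{Q})$, still with a rational $7$-isogeny and still $C_{21}\subseteq E'(L)$; then $C_3\subseteq E''(\mathbb{Q})$. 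Now pick $P_7\in E'(L)$ of order $7$. If $\langle P_7\rangle=D$, then $P_7$ lies on the $\mathbb{Q}$-rational line $D$, on which Galois acts through a character into $\mathbb{F}_7^{\times}$, so $[\mathbb{Q}(P_7):\mathbb{Q}]$ divides $\gcd(6,14)=2$. If $\langle P_7\rangle\neq D$, then $\phi(P_7)\neq O$ has order $7$ and, because $P_7\in E'[7]$, lies on $\bar D=\phi(E'[7])$, so again $[\mathbb{Q}(\phi(P_7)):\mathbb{Q}]$ divides $2$. In either case $E'$ or $E''$ has a point of order $7$ over a field $F$ with $[F:\mathbb{Q}]\le2$; since both $E'$ and $E''$ carry a rational point of order $2$ (resp.\ of order $3$), this gives $C_{14}$ (resp.\ $C_{21}$) inside $E'(F)$ or $E''(F)$, contradicting Theorem \ref{Theorem 2.1} when $F=\mathbb{Q}$ and Theorem \ref{Theorem 2.4} when $[F:\mathbb{Q}]=2$.

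\emph{The case $C_{49}$.} Let $P_{49}\in E'(L)$ have order $49$. Since $7P_{49}\in E'[7]$, after replacing $(E',D,P_{49})$ by $(E'',\bar D,\phi(P_{49}))$ if needed we may assume $\langle 7P_{49}\rangle=D$; then $\langle P_{49}\rangle$ is one of the seven cyclic subgroups of $E'[49]$ of order $49$ containing $D$, and $\phi$ identifies these seven, $G_{\mathbb{Q}}$-equivariantly, with the seven lines of $E''[7]$ other than $\bar D$. If $G_{\mathbb{Q}}$ fails to permute them transitively it fixes one of them, equivalently a $G_{\mathbb{Q}}$-stable cyclic $49$-subgroup of $E'$, i.e.\ a rational $49$-isogeny on $E'$ — impossible by Theorem \ref{Theorem 2.3}. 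Hence $G_{\mathbb{Q}}$ is transitive, $[\mathbb{Q}(\langle P_{49}\rangle):\mathbb{Q}]=7$, and $[\mathbb{Q}(P_{49}):\mathbb{Q}]=7d$ with $d:=[\mathbb{Q}(P_{49}):\mathbb{Q}(\langle P_{49}\rangle)]$ dividing $|\Aut(C_{49})|=42$; from $7d\mid14$ we get $d\in\{1,2\}$. If $d=1$ then $C_{49}\subseteq E'(M)$ with $[M:\mathbb{Q}]=7$, contradicting Theorem \ref{Theorem 2.1} (since $\Phi_{\mathbb{Q}}(7)=\Phi(1)$). If $d=2$, then the stabilizer of $\langle P_{49}\rangle$ in $G_{\mathbb{Q}}$ acts on $\langle P_{49}\rangle\cong C_{49}$ through its unique subgroup of order $2$, namely $\{\pm1\}$, hence also through $\{\pm1\}$ on $D=7\langle P_{49}\rangle$; as this stabilizer has index $7$ in $G_{\mathbb{Q}}$ while the character giving the action of $G_{\mathbb{Q}}$ on $D$ takes values in $\mathbb{F}_7^{\times}\cong C_6$, that character must itself have order at most $2$. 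Therefore $E'$, or its quadratic twist cutting out the relevant quadratic subfield, has a rational point of order $7$, i.e.\ corresponds to a non-cuspidal rational point of $X_1(7)$, and it remains to show that no curve in the one-parameter family $X_1(7)$ has a point of order $49$ defined over a number field of degree dividing $14$; this I check by the division-polynomial method applied to the universal curve of that family.

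The main obstacle is precisely this last step. The cases $C_{14}$, $C_{21}$, and everything in $C_{49}$ except the subcase $[\mathbb{Q}(P_{49}):\mathbb{Q}]=14$, reduce directly to Theorems \ref{Theorem 2.1}, \ref{Theorem 2.4}, \ref{Theorem 2.3}; but the degree-$14$ subcase of $C_{49}$ genuinely requires pinning $E'$ down to (a twist of) a member of the $X_1(7)$-family and then a finite computation with its $49$-th primitive division polynomial — isolating the factor whose roots are the $x$-coordinates of the order-$49$ points lying above the rational $7$-torsion point and checking that its degree stays above $14$ under the relevant specializations — which is where the real work of the lemma sits.
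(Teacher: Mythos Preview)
Your arguments for $C_{14}$ and $C_{21}$ are correct and take a genuinely different route from the paper. The paper simply observes (via Lemmas \ref{Lemma 3.4} and \ref{Lemma 3.6}) that $E'$ must carry a rational $14$- resp.\ $21$-isogeny; for $n=14$ this forces $E'$ to be CM, and for $n=21$ it pins $j(E')$ down to four explicit values which are then eliminated by a \texttt{Magma} division-polynomial computation. Your approach instead pushes a point of order $7$ down to an at most quadratic field (on $E'$ or on the $7$-isogenous curve $E''$) and combines it with rational $2$- or $3$-torsion there to produce $C_{14}$ or $C_{21}$ inside $\Phi_{\mathbb{Q}}(2)$, contradicting Theorem \ref{Theorem 2.4}. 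This is more hands-on but avoids both the CM classification behind $X_0(14)$ and any machine computation; the one unstated point---that a non-transitive Borel action on the seven lines off $\bar D$ must fix one of them---holds because a subgroup of $\mathrm{Aff}(\mathbb{F}_7)$ of order coprime to $7$ is conjugate into a point-stabilizer.

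For $C_{49}$ the paper gives no argument and simply cites \cite[Lemma~2.8]{1}, so your reduction already goes further than the paper does. However, the endgame you describe is not a ``finite computation'' in the sense you suggest: after your twist, $E'$ ranges over the entire one-parameter $X_1(7)$-family, and factoring the primitive $49$-division polynomial of the universal curve over $\mathbb{Q}(t)$ does not by itself control all specializations (irreducible factors over $\mathbb{Q}(t)$ can split further at specific $t\in\mathbb{Q}$). To finish along these lines one needs either a modular-curve argument bounding rational points on the relevant degree-$14$ cover of $X_1(7)$, or a direct analysis of the possible images of $\rho_{E',49}$---which is essentially what the cited reference carries out. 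So your reduction is sound, but the method you sketch for the residual step is not sufficient as stated.
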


\begin{proof}
\fbox{$C_{49}$}: This is proven in \cite[Lemma 2.8.]{1}.
\\
\\
\fbox{$C_{21}$}: By Lemma \ref{Lemma 3.4} and Lemma \ref{Lemma 3.6} $E'$ has a rational $3$ and $7$-isogenies, so it has a rational $21$-isogeny so $j(E) \in \{-3^2\cdot5^6/2^3, 3^3\cdot5^3/2, 3^3\cdot5^3\cdot101^3/2^{21}, -3^3\cdot5^3\cdot383^3/2^7 \}$. Using the division polynomial method in \texttt{Magma} \cite{35}, we see that this is impossible.
\\
\\
\fbox{$C_{14}$}: By Lemma \ref{Lemma 3.4} and Lemma \ref{Lemma 3.6} $E'$ has a rational $2$ and $7$-isogenies, so it has a rational $14$-isogeny so $E'$ has $\mathrm{CM}$ which implies that $E$ has $\mathrm{CM}$ aswell.
\end{proof}

\begin{theorem}
Let $K$ be a number field such that $[K:\mathbb{Q}]=7$. If $E(K)$ contains a point of order $n > 1$, then $C_{n} \in \Phi(1)$.
\end{theorem}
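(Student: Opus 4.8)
The plan is to carry out the same reduction as in the proof of the $p \ge 11$ theorem, but now invoking the lemmas tailored to $[K:\mathbb{Q}] = 7$. I would start from a point $P_n \in E(K)$ of order $n > 1$; if $E$ is a base change of an elliptic curve over $\mathbb{Q}$ the conclusion is immediate, since $\Phi_{\mathbb{Q}}(7) = \Phi(1)$ by \cite[Corollary 7.3.]{2}. Otherwise I would fix $E'/\mathbb{Q}$ with $j(E') = j(E)$ and a quadratic extension $L/K$ over which $E$ and $E'$ are isomorphic, so that $[L:\mathbb{Q}] = 14$ and $C_n \subseteq E'(L)$, and then apply Lemma \ref{Lemma 3.1} to write $n = 2^{a}3^{b}5^{c}7^{d}$.

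The next step is to remove the primes $5$ and $7$ from $n$. If $5 \mid n$, then $C_5 \subseteq E(K)_{tors}$ and Lemma \ref{Lemma 3.5} forces $E$ to be a base change, which is excluded; so $c = 0$. If $7 \mid n$, then $C_{49} \not\subseteq E(K)$ by Lemma \ref{Lemma 5.1} gives $d = 1$, while Lemma \ref{Lemma 3.6} supplies a rational $7$-isogeny on $E'$ and Lemma \ref{Lemma 3.4} a rational $2^{a}3^{b}$-isogeny; composing these coprime-degree isogenies and appealing to Theorem \ref{Theorem 2.3} should leave only $n \in \{7, 14, 21\}$, and Lemma \ref{Lemma 5.1} then kills $n = 14$ and $n = 21$, so $n = 7$ and $C_7 \in \Phi(1)$.

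Finally, for $n = 2^{a}3^{b}$ I would use Lemma \ref{Lemma 3.4} to get a rational $n$-isogeny on $E'$ and Theorem \ref{Theorem 2.3} to conclude $n \in \{1,2,3,4,6,8,9,12,16,18,27\}$; all of these give $C_n \in \Phi(1)$ except $16, 18, 27$, which are ruled out by Lemma \ref{Lemma 3.7}, Lemma \ref{Lemma 3.10} (using $7 \ge 5$), and Lemma \ref{Lemma 3.9} respectively. That would complete every case.

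The hard part is essentially nonexistent at the level of the theorem itself -- all the substance lives in the Section~3 lemmas and in Lemma \ref{Lemma 5.1} -- so what remains is careful bookkeeping. The one spot where I would be a little careful is the composition step when $7 \mid n$: I need the kernel of the $7$-isogeny from Lemma \ref{Lemma 3.6} and the kernel of the $2^{a}3^{b}$-isogeny from Lemma \ref{Lemma 3.4} to both be $\Gal(\overline{\mathbb{Q}}/\mathbb{Q})$-stable with coprime orders, so that their sum is a cyclic rational subgroup of order $2^{a}3^{b}\cdot 7$, and I should double-check that the multiples of $7$ in the Mazur--Kenku list of rational isogeny degrees really are only $7, 14, 21$. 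Everything else follows directly from the cited lemmas.
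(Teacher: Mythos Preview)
Your proposal is correct and follows essentially the same route as the paper's proof: factor $n=2^a3^b5^c7^d$ via Lemma~\ref{Lemma 3.1}, dispose of $5\mid n$ with Lemma~\ref{Lemma 3.5}, handle $7\mid n$ using Lemma~\ref{Lemma 5.1}, and finish the $2^a3^b$ case with Lemma~\ref{Lemma 3.4}, Theorem~\ref{Theorem 2.3}, and Lemmas~\ref{Lemma 3.7}, \ref{Lemma 3.9}, \ref{Lemma 3.10}. The only difference is that in the $7\mid n$ branch the paper applies Lemma~\ref{Lemma 5.1} immediately to force $a=b=0$ and $d=1$ (since $C_{14},C_{21},C_{49}$ are all excluded there), whereas you take a short detour through isogeny composition before invoking the same lemma---your version works, but the isogeny step is redundant.
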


\begin{proof}
Assume that $E(K)$ contains a point $P_n$ of order $n$. By Lemma \ref{Lemma 3.1}, the prime factors of $n$ can only be $2, 3, 5, 7$. Therefore, write $n=2^{a}3^{b}5^{c}7^{d}$, where $a,b,c,d \ge 0$. If $c \neq 0$, then by Lemma \ref{Lemma 3.5} $E$ is a base change of elliptic curve defined over $\mathbb{Q}$, so $C_n \in \Phi_{\mathbb{Q}}(7)=\Phi(1)$. Assume that $c=0$. If $d \neq 0$, Lemma \ref{Lemma 5.1} implies that $a=b=0$, so $C_n=C_7 \in \Phi(1)$. It remains to consider the case $c=d=0$. By Lemma \ref{Lemma 3.4}, $E'$ has a rational $2^{a}3^{b}$-isogeny. By Theorem \ref{Theorem 2.3} we have \[ 2^{a}3^{b} \in \{ 1,2,3,4,6,8,9,12,16,18,27 \} .\] Among these values of $n$, we have $C_n \notin \Phi(1)$ only for $n \in \{ 16, 18, 27 \}$. But each of these cases is impossible by Lemma \ref{Lemma 3.7}, Lemma \ref{Lemma 3.10} and Lemma \ref{Lemma 3.9}.
\end{proof}

\section{$[K:\mathbb{Q}]=5$}
In this subsection, let $K$ denote a number field such that $[K:\mathbb{Q}]=5$.
\\
If $P_n \in E(K)$ is a point of order $n$ then $C_n \subseteq E'(L)$. By Lemma \ref{Lemma 3.1} we only need to consider those integers $n$ whose prime factors are contained in $R_{\mathbb{Q}}(10)=\{ 2, 3, 5, 7, 11 \}$.

\begin{lemma}
\label{Lemma 6.1}
Let $E/K$ be an elliptic curve with rational j-invariant. Then $E(K)$ can't contain $C_{121}$, $C_{15}$, $C_{50}$ or $C_{125}$.
\end{lemma}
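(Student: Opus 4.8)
We treat $C_{15},C_{50},C_{121},C_{125}$ one at a time. Suppose $C_n\subseteq E(K)$ for one of these $n$. By the $\mathrm{CM}$ case treated earlier and Theorem~\ref{Theorem 2.7} we may assume $E$ has no $\mathrm{CM}$ and is not the base change of an elliptic curve over $\mathbb{Q}$; fix $E'/\mathbb{Q}$ with $j(E')=j(E)$ and a quadratic extension $L/K$ over which $E$ and $E'$ become isomorphic, so that $[L:\mathbb{Q}]=2\cdot5=10$ and $C_n\subseteq E'(L)$, say generated by $P'_n$.

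The first step is an elementary restriction on $[\mathbb{Q}(P'_n):\mathbb{Q}]$, which divides $[L:\mathbb{Q}]=10$. The value $1$ is impossible by Theorem~\ref{Theorem 2.1}; the value $5$ is impossible by Theorem~\ref{Theorem 2.7}, since none of $C_{15},C_{50},C_{121},C_{125}$ lies in $\Phi_{\mathbb{Q}}(5)$; and for $n\in\{50,121,125\}$ the value $2$ is impossible by Theorem~\ref{Theorem 2.4}. Hence for $n=50,121,125$ it remains to rule out a point of order $n$ on $E'$ defined over a field of degree exactly $10$, while for $n=15$ (where $C_{15}\in\Phi_{\mathbb{Q}}(2)$) I must in addition handle the degree-$2$ case.

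Next I bring in the isogeny information. For $n=15$: since $C_3,C_5\subseteq E(K)$, Lemmas~\ref{Lemma 3.4} and~\ref{Lemma 3.5} give $E'$ a rational $3$-isogeny and a rational $5$-isogeny with independent kernels, hence a rational $15$-isogeny; by Theorem~\ref{Theorem 2.3} there are only finitely many such $E'$ up to $\bar{\mathbb{Q}}$-isomorphism, so $j(E')$ ranges over an explicit finite set of rationals. For each such $j$ I take a model $E_0/\mathbb{Q}$, compute the primitive division polynomial $f_{E_0,15}$ in \texttt{Magma}, factor it over $\mathbb{Q}$, and check that it has no irreducible factor of degree $1$ or $5$; since the $x$-coordinate of $P_{15}$ generates a subfield of $K$ and $[K:\mathbb{Q}]=5$, this contradicts $C_{15}\subseteq E(K)$ (that the check succeeds is itself forced by $C_{15}\notin\Phi_{\mathbb{Q}}(5)$ together with a quadratic-twist argument). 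For $n=50=2\cdot5^2$: $C_2\subseteq E(K)$ gives a rational $2$-isogeny (Lemma~\ref{Lemma 3.4}), and I aim to show that the point $P'_{25}=2P'_{50}$ of order $25$ in $E'(L)$, combined with the rational $5$-isogeny of Lemma~\ref{Lemma 3.5}, forces a rational $25$-isogeny of $E'$; together with the $2$-isogeny this yields a rational $50$-isogeny, impossible by Theorem~\ref{Theorem 2.3}. For $n=121=11^2$: the point $11P'_{121}$ of order $11$ has degree over $\mathbb{Q}$ dividing $10$ but not equal to $1$ or $2$ (Theorems~\ref{Theorem 2.1} and~\ref{Theorem 2.2}), hence $5$ or $10$; by \cite[Table~1]{2} this forces a rational $11$-isogeny of $E'$, so $j(E')$ is one of the three $j$-invariants of rational $11$-isogeny curves over $\mathbb{Q}$ — one of which, $-2^{15}$, is a $\mathrm{CM}$ value and is excluded — and for the remaining two the division-polynomial method applied to $f_{E_0,121}$ (or the algorithm of \cite{1}) shows no point of order $121$ over a field of degree $\le 10$. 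For $n=125=5^3$: $E'$ has a rational $5$-isogeny (Lemma~\ref{Lemma 3.5}), and arguing as in the $C_{50}$ case a point of order $125$ over a field of degree $10$ forces a rational $125$-isogeny of $E'$, impossible by Theorem~\ref{Theorem 2.3} since $125\notin\{1,\dots,19,21,25,27,37,43,67,163\}$; alternatively, \cite[Table~1]{2} shows directly that no elliptic curve over $\mathbb{Q}$ has a point of order $125$ over a field of degree $\le 10$.

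The hard part is the degree-$10$ sub-case in the prime-power situations ($C_{50},C_{121},C_{125}$): passing from ``$E'/\mathbb{Q}$ has a point of order $q^k$ defined over a degree-$10$ field'' to either a rational isogeny of $E'$ (which Theorem~\ref{Theorem 2.3} then kills) or an explicit finite list of $j$-invariants. One cannot simply invoke $\Phi_{\mathbb{Q}}(5)$ here, because $L$ need not contain a quintic field over which $E'$ is defined; instead one must locate the relevant cyclic subgroup of order $q^k$ by analysing the mod-$q^k$ image of $\rho_{E'}$ (using that its reduction mod $q$ lies in a Borel, together with the bounds of \cite[Table~1]{2} on degrees of torsion points), and then carry out the finitely many division-polynomial computations in the $C_{15}$ and residual cases. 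I expect that once the degree-$10$ possibilities are eliminated — most plausibly because the modular curves $X_1(50)$, $X_1(121)$, $X_1(125)$ have no non-cuspidal points of degree $\le 10$ with rational $j$-invariant — all four cases close quickly.
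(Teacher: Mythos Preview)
Your overall strategy matches the paper's, and for $C_{121}$ and $C_{125}$ the paper does not argue at all: it simply cites \cite[Lemma~2.9]{1} and \cite[Proposition~2.7]{1}, so your sketches there are adequate stand-ins. The substantive differences are in $C_{15}$ and $C_{50}$.

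For $C_{15}$ your proposed route---reduce to the four $j$-invariants with a rational $15$-isogeny and then run the division-polynomial method---is legitimate (it is exactly what the paper does for $C_{21}$ in Lemma~\ref{Lemma 5.1}), but your parenthetical justification that the check ``is itself forced by $C_{15}\notin\Phi_{\mathbb{Q}}(5)$ together with a quadratic-twist argument'' is circular: if $f_{E_0,15}$ had a degree-$5$ irreducible factor with root $x_0$ and $y_0^2=x_0^3+ax_0+b$ were a non-square in $\mathbb{Q}(x_0)$, twisting would only produce a curve with rational $j$-invariant over a quintic field carrying $C_{15}$, which is precisely the statement being proved. So you must actually do the computation. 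The paper avoids computation here entirely: from the rational $15$-isogeny, $[\mathbb{Q}(P_{15}):\mathbb{Q}]$ divides $\gcd(|(\mathbb{Z}/15\mathbb{Z})^\times|,10)=2$, so by \cite[Theorem~2(c)]{7} $E'$ is one of four explicit LMFDB curves; for two of them $E'(\mathbb{Q})[5]=C_5$, whence $C_5\oplus C_5\subseteq E'(L)[5]$ and $\mathbb{Q}(\zeta_5)\subseteq L$, impossible as $4\nmid 10$; for the other two $E'(\mathbb{Q})[3]=C_3$ forces $\mathbb{Q}(\zeta_3)\subseteq L$, while those curves acquire $C_{15}$ only over $\mathbb{Q}(\sqrt{5})$ or $\mathbb{Q}(\sqrt{-15})$, giving $L$ two distinct quadratic subfields---impossible since $[L:\mathbb{Q}]=10$.

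For $C_{50}$ you have the correct endpoint (rational $25$-isogeny $+$ rational $2$-isogeny $\Rightarrow$ rational $50$-isogeny, contradicting Theorem~\ref{Theorem 2.3}) but, as you acknowledge, you have not shown that a point of order $25$ in $E'(L)$ forces the rational $25$-isogeny; your final paragraph's appeal to $X_1(50)$ is harder than what is needed. The paper closes this as follows: first eliminate $G_{E'}(5)\subseteq C_s(5)$, since two independent rational $5$-isogenies together with the rational $2$-isogeny would already yield a rational $50$-isogeny; this leaves $G_{E'}(5)$ among the seven Borel-type labels $5B.1.1,\dots,5B$, and a \texttt{Magma} check across these images shows that a point of order $25$ over an extension of degree $5$ or $10$ forces a rational $25$-isogeny.
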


\begin{proof}
\fbox{$C_{121}$}: This is proven in \cite[Lemma 2.9.]{1}.
\\
\\
\fbox{$C_{15}$}: By Lemma \ref{Lemma 3.4} and Lemma \ref{Lemma 3.5}, $E'$ has a rational $15$-isogeny, so $j(E') \in \{-5^2/2, -5^2 \cdot 241^3/2^3, -5\cdot 29^3/2^5, 5 \cdot 211^3/2^{15} \}.$ Let $\langle P_{15} \rangle$ be the kernel of such a rational $15$-isogeny and $P_{15}$ a point of order $15$ on $E'$. Since $\Gal(\mathbb{Q}(E[15])/\mathbb{Q})$ acts on $\langle P_{15} \rangle$, we get that $[\mathbb{Q}(P_{15}):\mathbb{Q}]$ divides $|(\mathbb{Z}/15\mathbb{Z})^{\times}|=8$. On the other hand, $\mathbb{Q}(P_{15}) \subseteq L$ and since $[L:\mathbb{Q}]=10$, we have $[\mathbb{Q}(P_{15}):\mathbb{Q}]=2$. By \cite[Theorem 2.c)]{7}, the $\mathrm{LMFDB}$ label of $E'$ is $\href{http://www.lmfdb.org/EllipticCurve/Q/50b1/}{\mathrm{50.b3}}$, $\href{http://www.lmfdb.org/EllipticCurve/Q/50b2/}{\mathrm{50.b4}}$, $\href{http://www.lmfdb.org/EllipticCurve/Q/50a3/}{\mathrm{50.a2}}$ or $\href{http://www.lmfdb.org/EllipticCurve/Q/450b4/}{\mathrm{450.g4}}$. Let $E' \in \{ \href{http://www.lmfdb.org/EllipticCurve/Q/50b1/}{\mathrm{50.b3}}, \href{http://www.lmfdb.org/EllipticCurve/Q/50b2/}{\mathrm{50.b4}} \}.$ Then we have $E'(\mathbb{Q})[5]=C_5$ and since $C_5 \oplus C_5 \subseteq E(K)[5] \oplus E'(K)[5] \cong E'(L)[5]$, the Weil pairing implies that $\mathbb{Q}(\zeta_5) \subseteq L$ which is impossible. Assume now that $E' \in \{ \href{http://www.lmfdb.org/EllipticCurve/Q/50a3/}{\mathrm{50.a2}}, \href{http://www.lmfdb.org/EllipticCurve/Q/450b4/}{\mathrm{450.g4}} \}.$ In these cases we have that $E'(\mathbb{Q})[3]=C_3$. Applying the same reasoning as before, we conclude that $\mathbb{Q}(\zeta_3) \subseteq L$. On the other hand these two curves attain $15$-torsion point over $\mathbb{Q}(\sqrt{5})$ and $\mathbb{Q}(\sqrt{-15})$, respectively. Therefore we have $\mathbb{Q}(\sqrt{5}) \subseteq L$ or $\mathbb{Q}(\sqrt{-15}) \subseteq L$, but this is impossible since $\mathbb{Q}(\zeta_3) \subseteq L$ is a unique quadratic subextension of $L$, because $[L:\mathbb{Q}]=10$.
\\
\\
\fbox{$C_{50}$}: By Lemma \ref{Lemma 3.4} we have that $E'$ has rational $2$ and $5$-isogenies. If $G_{E'}(5) \subseteq C_{s}(5)$, then $E'$ has two independent rational $5$-isogenies. Denote by  $\langle P \rangle$ and $\langle Q \rangle$ the kernels of these isogenies. We have that $\langle P_2+P \rangle$ and $\langle Q \rangle$ are kernels of independent $10$ and $5$-isogenies, so $E'$ is isogenous over $\mathbb{Q}$ to a curve $E''/\mathbb{Q}$ with a rational $50$-isogeny, which is impossible by Theorem \ref{Theorem 2.3}. Therefore we conclude that $G_{E'}(5) \in \{ 5B.1.1, 5B.1.2, 5B.1.3, 5B.1.4, 5B.4.1, 5B.4.2, 5B \}.$ A calculation in \texttt{}{Magma} shows that if $E'$ obtains a point $P_{25}$ of order $25$ over a degree $5$ or $10$ extension of $\mathbb{Q}$, then it must have a rational $25$-isogeny. But since it has a rational $2$-isogeny as well, it must have a rational $50$-isogeny, which is impossible by Theorem \ref{Theorem 2.3}. 
\\
\\
\fbox{$C_{125}$}: This is proven in \cite[Proposition 2.7.]{1}.
\end{proof}
\begin{theorem}
Let $K$ be a number field such that $[K:\mathbb{Q}]=5$. If $E(K)$ contains a point of order $n > 1$, then $C_{n} \in \Phi_{\mathbb{Q}}(5)$.
\end{theorem}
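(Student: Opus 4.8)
The plan is to mimic the proofs of the $[K:\mathbb{Q}]=7$ and $[K:\mathbb{Q}]\ge 11$ theorems, reducing everything to the structural lemmas of Section~3, Lemma~\ref{Lemma 6.1} and Theorem~\ref{Theorem 2.3}, but with two new wrinkles: for $[K:\mathbb{Q}]=5$ Lemma~\ref{Lemma 3.5} only yields that $E'$ has a rational $5$-isogeny (not that $E$ is a base change), and the prime $11$ must now be handled since $11\in R_{\mathbb{Q}}(10)$. Since the CM case is already settled we may assume $E$ is non-CM with $j(E)\in\mathbb{Q}\setminus\{0,1728\}$; if $E$ is a base change of some $E_0/\mathbb{Q}$ then $C_n\subseteq E_0(K)_{tors}\in\Phi_{\mathbb{Q}}(5)$, and since by Theorem~\ref{Theorem 2.7} every cyclic subgroup of a group in $\Phi_{\mathbb{Q}}(5)$ — a divisor of $12$, of $25$, or of $8$ — is itself in $\Phi_{\mathbb{Q}}(5)$, we are done. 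So assume $E$ is not a base change, fix $E'/\mathbb{Q}$ with $j(E')=j(E)$ and a quadratic $L/K$ with $[L:\mathbb{Q}]=10$ over which $E\cong E'$, so that $C_n\subseteq E'(L)$; by Lemma~\ref{Lemma 3.1} write $n=2^a3^b5^c7^d11^e$.

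First, if $d\ge 1$ then $C_7\subseteq E(K)$ and, as $[K:\mathbb{Q}]=5\ne 3,7$, Lemma~\ref{Lemma 3.6} forces $E$ to be a base change, contrary to assumption; hence $d=0$. Next suppose $e\ge 1$. Then $E'$ has a point of order $11$ over a field of degree dividing $[L:\mathbb{Q}]=10$, which by \cite[Table 1]{2} forces $E'$ to have a rational $11$-isogeny. If moreover $2\mid n$, $3\mid n$ or $5\mid n$, then $E'$ also has, by Lemma~\ref{Lemma 3.4} (respectively Lemma~\ref{Lemma 3.4}, Lemma~\ref{Lemma 3.5}), a rational $2$-, $3$- or $5$-isogeny; since the two kernels have coprime orders their sum is a cyclic Galois-stable subgroup, so $E'$ has a rational $22$-, $33$- or $55$-isogeny, all forbidden by Theorem~\ref{Theorem 2.3}. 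Thus $n=11^e$, and $C_{121}\not\subseteq E(K)$ (Lemma~\ref{Lemma 6.1}) forces $n=11\in\Phi_{\mathbb{Q}}(5)$.

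Now assume $d=e=0$. If $5\mid n$, Lemma~\ref{Lemma 3.5} gives $E'$ a rational $5$-isogeny. By Lemma~\ref{Lemma 6.1} the groups $C_{15}$, $C_{50}$, $C_{125}$ are impossible, which already forces $b=0$ (else $15\mid n$), $c\le 2$, and $a=0$ whenever $c=2$ (else $50\mid n$); and if $c=1$ with $a\ge 2$ then $C_{20}\subseteq E(K)$, so $E'$ has a rational $4$-isogeny by Lemma~\ref{Lemma 3.4} and hence, composing with the rational $5$-isogeny, a rational $20$-isogeny, again forbidden by Theorem~\ref{Theorem 2.3}. So $n\in\{5,10,25\}$, all with $C_n\in\Phi_{\mathbb{Q}}(5)$. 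Finally, if $c=d=e=0$ then $n=2^a3^b$ and Lemma~\ref{Lemma 3.4} gives $E'$ a rational $2^a3^b$-isogeny, so by Theorem~\ref{Theorem 2.3} $n\in\{1,2,3,4,6,8,9,12,16,18,27\}$; the values $16$, $18$, $27$ are excluded by Lemmas~\ref{Lemma 3.7}, \ref{Lemma 3.10}, \ref{Lemma 3.9} (here $[K:\mathbb{Q}]=5$ is an odd prime $\ge 5$), and for the remaining values $C_n\in\Phi_{\mathbb{Q}}(5)$.

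The genuinely hard content has all been front-loaded — into Lemma~\ref{Lemma 6.1} (the exclusions of $C_{15}$, $C_{50}$, $C_{121}$, $C_{125}$, which rest on the division-polynomial and modular-curve input) and into the structural lemmas of Section~3. Granting those, the two points that need care in the argument above are: (i) checking each time that the two cyclic kernels being composed have coprime order, so that their sum really is a cyclic Galois-stable subgroup to which Theorem~\ref{Theorem 2.3} applies; and (ii) the bookkeeping of exactly which $n=2^a3^b5^c7^d11^e$ survive all the constraints. I do not expect any obstacle beyond (ii); the one statement worth double-checking is that a point of order $11$ on $E'$ defined over a field of degree dividing $10$ forces a rational $11$-isogeny, which should be read off \cite[Table 1]{2}.
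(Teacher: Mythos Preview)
Your proof is correct and follows essentially the same route as the paper's: the same case split on the prime factors of $n$, the same use of Lemmas~\ref{Lemma 3.4}--\ref{Lemma 3.6} and \ref{Lemma 6.1}, and the same isogeny contradictions via Theorem~\ref{Theorem 2.3}. Two minor remarks: the paper reads the $11$-isogeny conclusion off \cite[Table~2]{2} rather than Table~1, and in the $5\mid n$ case the paper gets the bound on $(a,b)$ more directly by first producing a rational $2^a3^b\cdot 5$-isogeny (Lemmas~\ref{Lemma 3.4} and~\ref{Lemma 3.5} together) and then invoking Theorem~\ref{Theorem 2.3}, rather than treating the $20\mid n$ subcase separately---but the content is identical.
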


\begin{proof}
Assume that $E(K)$ contains a point $P_n$ of order $n$. By Lemma \ref{Lemma 3.1}, the prime factors of $n$ can only be $2, 3, 5, 7, 11$. Therefore, write $n=2^{a}3^{b}5^{c}7^{d}11^{e}$, where $a,b,c,d,e \ge 0$. If $d \neq 0$, then by Lemma \ref{Lemma 3.6} $E$ is a base change of an elliptic curve defined over $\mathbb{Q}$, so $C_n \in \Phi_{\mathbb{Q}}(5)$. Assume that $d=0$. If $e \neq 0$, from \cite[Table 2]{2} we see that $E'$ has a rational $11$-isogeny. If one of  $a,b,c$ is not zero, $E'$ would have a rational $p$-isogeny, where $p \in \{ 2, 3, 5 \}$ so it would have a rational $11p$-isogeny which is impossible by Theorem \ref{Theorem 2.3}. Therefore we have $a=b=c=0$. If $e \ge 2$, this is impossible by Lemma \ref{Lemma 6.1}. We conclude that if $e \ge 1$, then $n=11$. Consider now the case when $d=e=0$. If $c \ge 1$, by Lemma \ref{Lemma 3.4} and Lemma \ref{Lemma 3.5} $E'$ has a rational $2^{a}3^{b}5$-isogeny. By Theorem \ref{Theorem 2.3} we have $2^{a}3^{b}5 \in \{ 5, 10, 15 \}$, so $(a,b) \in \{ (0,0), (1,0), (0,1) \}$ and $n \in \{ 5^c, 2 \cdot 5^c, 3 \cdot 5^c \}$. We have that this is impossible by Lemma \ref{Lemma 6.1} unless $n \in \{ 5, 10, 25 \}$, but for those values of $n$ we have $C_{n} \in \Phi_{\mathbb{Q}}(5)$. Finally let us consider the case when $n=2^{a}3^{b}$. By Theorem \ref{Theorem 2.3} we know that \[ n=2^{a}3^{b} \in \{ 1,2,3,4,6,8,9,12,16,18,27 \} .\] Removing the $n$ for which $C_n \in \Phi_{\mathbb{Q}}(5)$ we only need to consider $n \in \{ 16, 18, 27 \}$. These three cases have been proven to be impossible in Lemma \ref{Lemma 3.7}, Lemma \ref{Lemma 3.9} and Lemma \ref{Lemma 3.10}.
\end{proof}

\begin{lemma}
Let $E/K$ be an elliptic curve with rational j-invariant. Then $E(K)$ can't contain $C_{2} \oplus C_{10}$ or $C_{2} \oplus C_{12}$.
\end{lemma}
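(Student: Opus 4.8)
I would handle the two groups separately. The group $C_2\oplus C_{12}$ costs nothing: since $[K:\mathbb{Q}]=5$ is odd, Lemma~\ref{Lemma 3.8} already says that $E(K)$ cannot contain $C_2\oplus C_{12}$. So all the work is for $C_2\oplus C_{10}$, which I would rule out by contradiction. Suppose $C_2\oplus C_{10}\subseteq E(K)$. If $E$ were a base change from $\mathbb{Q}$, then $E(K)_{tors}\in\Phi_{\mathbb{Q}}(5)$, which is impossible by Theorem~\ref{Theorem 2.7}; so $E$ is not a base change (in particular $j(E)\notin\{0,1728\}$, since we have already reduced to the non-CM case), and following the general strategy I fix $E'/\mathbb{Q}$ with $j(E')=j(E)$ and a quadratic extension $L=K(\sqrt d)$ of $K$ over which $E$ and $E'$ become isomorphic. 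Then $[L:\mathbb{Q}]=10$ and $C_2\oplus C_{10}\subseteq E'(L)$.

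Next I would record two facts about $E'$. First, quadratic twisting does not change the splitting field of the $2$-division polynomial, so $K(E'[2])=K(E[2])=K$, the last equality because $E(K)[2]=C_2\oplus C_2$; hence $\mathbb{Q}(E'[2])\subseteq K$, and since $\mathbb{Q}(E'[2])/\mathbb{Q}$ is Galois of degree dividing $|\Gl_2(\mathbb{Z}/2\mathbb{Z})|=6$ while $[K:\mathbb{Q}]=5$, we must have $\mathbb{Q}(E'[2])=\mathbb{Q}$, i.e.\ $E'(\mathbb{Q})[2]=C_2\oplus C_2$. Second, $C_5\subseteq E(K)$ and $[K:\mathbb{Q}]=5$, so by Lemma~\ref{Lemma 3.5} the curve $E'$ has a $\mathbb{Q}$-rational $5$-isogeny $\phi\colon E'\to E'':=E'/C$ with kernel $C$.

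The heart of the argument — and the step I expect to need the most care — is the following reduction. Fix $R_5\in E'(L)$ of order $5$. If $\langle R_5\rangle=C$, put $A:=E'$ and $P:=R_5$. Otherwise $E'[5]=C\oplus\langle R_5\rangle$, the point $\phi(R_5)\in E''(L)$ has order $5$ and generates $\ker\hat\phi$ — a $\mathrm{Gal}(\overline{\mathbb{Q}}/\mathbb{Q})$-stable cyclic subgroup of order $5$, being the kernel of the rational dual isogeny $\hat\phi$ — and I put $A:=E''$ and $P:=\phi(R_5)$. In both cases $A/\mathbb{Q}$ has full rational $2$-torsion (when $A=E''$ this is because $\phi$ restricts to a $\mathrm{Gal}(\overline{\mathbb{Q}}/\mathbb{Q})$-equivariant isomorphism $E'[2]\xrightarrow{\sim}E''[2]$, as $\deg\phi$ is odd), and $P\in A$ has order $5$ and lies on a $\mathrm{Gal}(\overline{\mathbb{Q}}/\mathbb{Q})$-stable cyclic subgroup $D$ of order $5$. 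The Galois action on $D$ is through a character valued in $(\mathbb{Z}/5\mathbb{Z})^\times$, so $[\mathbb{Q}(P):\mathbb{Q}]$ divides $4$; but $P$ is defined over $L$, so $[\mathbb{Q}(P):\mathbb{Q}]$ also divides $[L:\mathbb{Q}]=10$, whence it divides $\gcd(4,10)=2$. In other words, replacing $E'$ by the $5$-isogenous curve drags the order-$5$ point — a priori only rational over the degree-$10$ field $L$ — down to a field of degree at most $2$.

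It then remains to contradict the existence of such an $A$. If $[\mathbb{Q}(P):\mathbb{Q}]=1$, then together with $A(\mathbb{Q})[2]=C_2\oplus C_2$ we get $C_2\oplus C_{10}\subseteq A(\mathbb{Q})$, contradicting Theorem~\ref{Theorem 2.1}. If $[\mathbb{Q}(P):\mathbb{Q}]=2$, say $P\in A(F)$ with $[F:\mathbb{Q}]=2$: either $A(\mathbb{Q})[5]\neq\{O\}$, and again $C_2\oplus C_{10}\subseteq A(\mathbb{Q})$ contradicts Theorem~\ref{Theorem 2.1}; or $A(\mathbb{Q})[5]=\{O\}$, and then Lemma~\ref{Lemma 3.2} (with $p=5$) yields a quadratic twist $A'/\mathbb{Q}$ of $A$ with $A'(\mathbb{Q})[5]=C_5$, which still has full rational $2$-torsion by twist invariance, so $C_2\oplus C_{10}\subseteq A'(\mathbb{Q})$ again contradicts Theorem~\ref{Theorem 2.1}. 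Every case being contradictory, $E(K)$ can contain neither $C_2\oplus C_{10}$ nor $C_2\oplus C_{12}$.
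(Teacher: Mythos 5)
Your proof is correct, and for $C_2\oplus C_{10}$ it takes a genuinely different route from the paper's in its second half. Both arguments share the same opening moves: $C_2\oplus C_{12}$ is dispatched by Lemma~\ref{Lemma 3.8}, and for $C_2\oplus C_{10}$ one shows $E'(\mathbb{Q})[2]=C_2\oplus C_2$ (twist-invariance of $2$-torsion plus $[K:\mathbb{Q}]=5$) and invokes Lemma~\ref{Lemma 3.5} to get a rational $5$-isogeny on $E'$. At that point the paper finishes with an isogeny argument: the two independent rational $2$-torsion points together with the kernel of the $5$-isogeny produce independent rational $10$- and $2$-isogenies, hence a curve isogenous to $E'$ over $\mathbb{Q}$ with a rational $20$-isogeny, contradicting Theorem~\ref{Theorem 2.3}. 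You instead track the order-$5$ point itself: passing to the $5$-isogenous curve when $R_5$ does not generate the kernel (so that the point generates the Galois-stable kernel of the dual isogeny, with full rational $2$-torsion preserved since the isogeny has odd degree), you bound $[\mathbb{Q}(P):\mathbb{Q}]$ by $\gcd(4,10)=2$ via the character action on a stable cyclic subgroup, and then use Lemma~\ref{Lemma 3.2} and twist-invariance of full $2$-torsion to force $C_2\oplus C_{10}$ into the rational torsion of a curve over $\mathbb{Q}$, contradicting Mazur's Theorem~\ref{Theorem 2.1}. The paper's endgame is shorter; yours trades the isogeny-composition trick and the Kenku--Mazur isogeny classification for the more elementary input of Mazur's torsion theorem plus the twisting lemma, at the cost of the case split on whether $R_5$ lies in the isogeny kernel. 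All the steps you flag as delicate (rationality of the dual isogeny's kernel, the degree bound, the quadratic-twist descent) are sound.
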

\begin{proof}
\fbox{$C_{2} \oplus C_{12}$}: This has been proven in Lemma \ref{Lemma 3.8}.
\\
\\
\fbox{$C_{2} \oplus C_{10}$}: Since $2$-torsion does not change when twisting, $E(K)[2]=C_2 \oplus C_2$ implies that $E'(K)[2]=C_2 \oplus C_2$. From $[K:\mathbb{Q}]=5$ it follows that $E'(\mathbb{Q})[2]=C_2 \oplus C_2$. Lemma \ref{Lemma 3.5} implies that $E'$ has a rational $5$-isogeny and we know that it has full $2$-torsion over $\mathbb{Q}$. It follows that $E'$ is isogenous over $\mathbb{Q}$ to $E''/\mathbb{Q}$ that has $20$-isogeny, which is impossible by Theorem \ref{Theorem 2.3}.
\end{proof}

\section{$[K:\mathbb{Q}]=3$}
In this subsection, let $K$ denote a number field such that $[K:\mathbb{Q}]=3$.
\\
If $P_n \in E(K)$ is a point of order $n$ then $C_n \subseteq E'(L)$. By Lemma \ref{Lemma 3.1} we only need to consider those integers $n$ whose prime factors are contained in $R_{\mathbb{Q}}(6)=\{ 2, 3, 5, 7, 13 \}$.
\begin{theorem}
Let $K$ be a number field such that $[K:\mathbb{Q}]=3$. If $E(K)$ contains a point of order $n > 1$, then $C_{n} \in \Phi_{\mathbb{Q}}(3)$.
\end{theorem}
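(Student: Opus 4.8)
The plan is to follow the template of the previous three sections. First I would isolate the basic dichotomy. If $E$ is a base change of an elliptic curve defined over $\mathbb{Q}$, then $E(K)_{tors}\in\Phi_{\mathbb{Q}}(3)$ by Theorem \ref{Theorem 2.5}; consequently the order of any point of $E(K)$ lies in $\{1,\dots,10,12,13,14,18,21\}$, and for each such $n$ one has $C_n\in\Phi_{\mathbb{Q}}(3)$, so we are done. Otherwise I fix an elliptic curve $E'/\mathbb{Q}$ with $j(E')=j(E)$ and a quadratic extension $L$ of $K$ over which $E$ and $E'$ become isomorphic; then $[L:\mathbb{Q}]=6$ and $C_n\subseteq E'(L)$.

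Next I would pin down the finitely many possible values of $n$. By Lemma \ref{Lemma 3.1} the prime divisors of $n$ lie in $R_{\mathbb{Q}}(6)=\{2,3,5,7,13\}$, and feeding $C_n\subseteq E'(L)_{tors}$ into Theorem \ref{Theorem 2.8} (torsion of rational elliptic curves over sextic fields) forces $n$ into the list of element orders occurring there, namely $n\in\{1,\dots,10,12,13,14,15,16,18,21,30\}$. For every value in this list except $15$, $16$ and $30$ one already has $C_n\in\Phi_{\mathbb{Q}}(3)$ by Theorem \ref{Theorem 2.5}, so the theorem reduces to ruling out $n\in\{15,16,30\}$.

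The value $n=16$ is excluded immediately by Lemma \ref{Lemma 3.7}, since $[K:\mathbb{Q}]=3$ is odd. For $n\in\{15,30\}$ the point is that $5\mid n$, so $C_5\subseteq E(K)$; applying Lemma \ref{Lemma 3.5} (valid since $3$ is an odd prime $\neq 5$) forces $E$ to be a base change of an elliptic curve over $\mathbb{Q}$, contradicting the assumption of the case we are in. Equivalently, were $E$ such a base change, $C_{15}$ (resp.\ $C_{30}$) would have to embed in some group of $\Phi_{\mathbb{Q}}(3)$, which is impossible since no group in that list has order divisible by $15$.

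I do not anticipate a genuine obstacle: the three substantive inputs --- the determination of $R_{\mathbb{Q}}(6)$, the odd-degree obstruction to $C_{16}$, and the fact that a $5$-torsion point over a field of odd prime degree on a curve with rational $j$-invariant forces a base change --- are already isolated in Lemmas \ref{Lemma 3.1}, \ref{Lemma 3.7} and \ref{Lemma 3.5}, while the torsion growth of $\mathbb{Q}$-curves over sextic fields is exactly Theorem \ref{Theorem 2.8}. The only mildly delicate bookkeeping is verifying that the exceptional set is precisely $\{15,16,30\}$, i.e.\ checking that every other $n$ allowed by Theorem \ref{Theorem 2.8} already occurs as the order of a point on some group listed in Theorem \ref{Theorem 2.5}.
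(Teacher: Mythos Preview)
Your proposal is correct and follows essentially the same route as the paper: pass to the sextic field $L$, invoke Theorem~\ref{Theorem 2.8} to restrict $n$ to the list $\{1,\dots,10,12,13,14,15,16,18,21,30\}$, observe that only $n\in\{15,16,30\}$ fall outside $\Phi_{\mathbb{Q}}(3)$, and then eliminate $16$ via Lemma~\ref{Lemma 3.7} and $15,30$ via Lemma~\ref{Lemma 3.5}. The only cosmetic differences are that the paper reduces $30$ to $15$ by divisibility rather than handling it separately, and it does not bother invoking Lemma~\ref{Lemma 3.1} explicitly since Theorem~\ref{Theorem 2.8} already constrains the possible $n$.
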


\begin{proof}
Let $C_n \subseteq E(K)$. We have that $C_n  \subseteq E'(L)$ where $[L:\mathbb{Q}]=6$. Therefore, by Theorem \ref{Theorem 2.8} we have that $C_n$ is equal to the one of the following groups: 
\[C_m: m=1, \ldots, 10, 12, 13, 14, 15, 16, 18, 21, 30 .\] If $C_{n} \in \Phi_{\mathbb{Q}}(3)$, we are done. Assume that this isn't the case. Then $n \in \{ 15, 16, 30 \}$. Obviously it's enough to show that $n=15$ and $n=16$ is impossible. If $n=15$, by Lemma \ref{Lemma 3.5} we have that $E$ is a base change of an elliptic curve defined over $\mathbb{Q}$. Since $C_{15} \notin \Phi_{\mathbb{Q}}(3)$, we are done. The case when $n=16$ has been proven in Lemma \ref{Lemma 3.7}.
\end{proof}

\begin{lemma}
Let $E/K$ be an elliptic curve with rational j-invariant. Then $E(K)$ can't contain $C_{2} \oplus C_{10}$, $C_{2} \oplus C_{12}$ or $C_{2} \oplus C_{18}$.
\end{lemma}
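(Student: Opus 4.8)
For $C_2\oplus C_{12}$ this is precisely Lemma \ref{Lemma 3.8}, since $[K:\mathbb{Q}]=3$ is odd. For $C_2\oplus C_{10}$: if $C_2\oplus C_{10}\subseteq E(K)$ then $C_5\subseteq E(K)_{tors}$, so Lemma \ref{Lemma 3.5} (which applies because $3$ is an odd prime $\neq 5$) shows that $E$ is a base change of an elliptic curve over $\mathbb{Q}$; hence $E(K)_{tors}\in\Phi_{\mathbb{Q}}(3)$ by Theorem \ref{Theorem 2.5}, and no group in $\Phi_{\mathbb{Q}}(3)$ contains $C_2\oplus C_{10}$, a contradiction.

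For $C_2\oplus C_{18}$ the plan is as follows. If $E$ is a base change we conclude as above, since no group in $\Phi_{\mathbb{Q}}(3)$ contains $C_2\oplus C_{18}$. Otherwise write $E=(E')^{d}$ over $K$ with $E'/\mathbb{Q}$, $L=K(\sqrt{d})\neq K$, $[L:\mathbb{Q}]=6$, and $C_2\oplus C_{18}\subseteq E'(L)$. By Theorem \ref{Theorem 2.8} this forces $E'(L)_{tors}=C_2\oplus C_{18}$. Since full $2$-torsion is twist-invariant, $E'(K)[2]=C_2\oplus C_2$, and decomposing the $3$-primary part of $E'(L)_{tors}$ into eigenspaces for the generator of $\Gal(L/K)$ gives $E'(K)[3^{\infty}]=0$, so $E'(K)_{tors}=C_2\oplus C_2$ while $\mathbb{Q}(E'[2])$ has degree $1$ or $3$. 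The first substantial step is to prove that $E'$ has a rational $3$-isogeny: a point of order $3$ in $E(K)$ makes $\rho_{E,3}(\Gal(\overline{\mathbb{Q}}/K))$ stabilize a line, so after twisting by the quadratic character of $L/K$ the group $\rho_{E',3}(\Gal(\overline{\mathbb{Q}}/K))$ still stabilizes a line, that is, lies in a Borel of $\Gl_{2}(\mathbb{Z}/3\mathbb{Z})$; since $[\mathbb{Q}(E'[3])\cap K:\mathbb{Q}]$ divides $[K:\mathbb{Q}]=3$ it is coprime to $2$, and going through the subgroups of $\Gl_{2}(\mathbb{Z}/3\mathbb{Z})$ one checks that a non-Borel group cannot have a subgroup of index dividing $3$ lying in a Borel (the closest candidate, a split-Cartan normalizer, would need index $2$). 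Hence $G_{E'}(3)$ itself lies in a Borel and $E'$ has a rational $3$-isogeny. This is the mechanism of Lemmas \ref{Lemma 3.3}--\ref{Lemma 3.4}, but it survives even when $\mathbb{Q}(E'[3])\cap K\neq\mathbb{Q}$.

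The heart of the matter is the $9$-torsion. From $C_9\subseteq E(K)$ and the twist, transporting a point of order $9$ of $E(K)$ through the isomorphism $E\to E'$ defined over $L$ produces a $K$-rational cyclic subgroup $\langle Q\rangle\subseteq E'$ of order $9$ on which $\Gal(\overline{\mathbb{Q}}/K)$ acts through the quadratic character of $L/K$; in particular $\langle 3Q\rangle$ is a $K$-rational line of $E'[3]$, and comparing it with the rational $3$-isogeny kernel (and using the restriction on $G_{E'}(3)$ just obtained, together with $[K:\mathbb{Q}]=3$ being odd) one argues that $\langle Q\rangle$ is in fact $\Gal(\overline{\mathbb{Q}}/\mathbb{Q})$-stable, so $E'$ has a rational $9$-isogeny with kernel $\langle R_9\rangle$, say with character $\alpha_9\colon\Gal(\overline{\mathbb{Q}}/\mathbb{Q})\to(\mathbb{Z}/9\mathbb{Z})^{\times}$. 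Now $C_9\subseteq E(K)$ forces $\alpha_9$ restricted to $\Gal(\overline{\mathbb{Q}}/K)$ to have order at most $2$; since the index of $\alpha_9(\Gal(\overline{\mathbb{Q}}/K))$ in $\alpha_9(\Gal(\overline{\mathbb{Q}}/\mathbb{Q}))$ divides $3$, only $|\alpha_9(\Gal(\overline{\mathbb{Q}}/\mathbb{Q}))|\in\{2,6\}$ is possible --- if that order were $1$ or $3$, then a point of order $9$ of $E'$ would be rational, respectively defined over a cubic field necessarily equal to $K$, giving $C_2\oplus C_{18}\subseteq E'(\mathbb{Q})$ or $C_2\oplus C_{18}\subseteq E'(K)$, which is impossible by Theorem \ref{Theorem 2.1} respectively Theorem \ref{Theorem 2.5}. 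In the two remaining cases the kernel point of the $9$-isogeny of $E'$ lies over a quadratic or a sextic field, which (combined with $E'(K)[2]=C_2\oplus C_2$ and the degree of $\mathbb{Q}(E'[2])$) forces $d$ up to squares and confines $j(E')$ to the rational points of the appropriate modular curve obtained by adding the level-$2$ datum to $X_0(9)$ (a curve of genus $\le 1$); being finitely many, each such $j$-invariant is then eliminated by the division polynomial method together with the algorithm of \cite{1} over the relevant cubic and sextic fields, exactly as in Lemmas \ref{Lemma 5.1} and \ref{Lemma 6.1}.

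I expect two points to require care: (i) the passage from ``$K$-rational cyclic $9$-subgroup of $E'$'' to ``$E'$ has a rational $9$-isogeny'', that is, excluding the alternative that $E'$ acquires a split-Cartan mod-$3$ image over $K$ --- a bookkeeping argument with the subgroup lattice of $\Gl_{2}(\mathbb{Z}/9\mathbb{Z})$ and the divisibility of the relevant field-intersection degree by $3$ should settle it; and (ii) showing that the modular curve appearing in the last step has only finitely many rational points (equivalently, that the relevant genus-$1$ quotient has rank $0$), after which the concluding verification is routine.
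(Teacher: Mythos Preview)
Your arguments for $C_2\oplus C_{10}$ and $C_2\oplus C_{12}$ coincide with the paper's. The divergence is in the $C_2\oplus C_{18}$ case, where your route is both incomplete (you flag this yourself at the end) and far more elaborate than necessary. The steps ``one argues that $\langle Q\rangle$ is in fact $\Gal(\overline{\mathbb{Q}}/\mathbb{Q})$-stable'' and the final modular-curve/division-polynomial verification are plans, not proofs; in particular you never establish that $E'$ has a rational $9$-isogeny, nor do you carry out the promised finite check. So as written this case is a genuine gap.

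The paper avoids all of the $9$-adic analysis with a short trick. You correctly argue that $G_{E'}(3)$ lies in a Borel (your ``first substantial step''); but then, instead of pushing to level $9$, observe that the kernel point of the rational $3$-isogeny has $[\mathbb{Q}(P_3):\mathbb{Q}]\le 2$. Now apply Lemma~\ref{Lemma 3.2}: replace $E'$ by the quadratic twist over $\mathbb{Q}$ for which $E'(\mathbb{Q})[3]=C_3$ (and replace $L$ by the new quadratic extension of $K$ over which $E\cong E'$; if this extension were trivial, $E$ would be a base change and Theorem~\ref{Theorem 2.5} already gives a contradiction). With this choice of $E'$, the eigenspace decomposition reads
\[
E'(L)[3]\;\cong\;E'(K)[3]\oplus E(K)[3]\;\supseteq\;C_3\oplus C_3,
\]
so $E'$ has full $3$-torsion over $L$. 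Combined with $C_9\subseteq E'(L)$ (from $C_9\subseteq E(K)$) and $E'(L)[2]=C_2\oplus C_2$ (twist-invariant), one gets $C_6\oplus C_{18}\subseteq E'(L)$. No group in the list of Theorem~\ref{Theorem 2.8} contains $C_6\oplus C_{18}$, and that is the contradiction. The retwist is the whole point: it converts your observation $E'(L)_{tors}=C_2\oplus C_{18}$ for the \emph{original} $E'$ into the impossible $C_6\oplus C_{18}$ for the \emph{adjusted} $E'$, with no level-$9$ bookkeeping, no $X_0(9)$-with-level-$2$ modular curve, and no case-by-case elimination.
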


\begin{proof}
\fbox{$C_{2} \oplus C_{10}$}: If $C_{2} \oplus C_{10} \subseteq E(K)$, by Lemma \ref{Lemma 3.5} and Lemma \ref{Lemma 3.6} we have that $E$ is a base change of an elliptic curve defined over $\mathbb{Q}$, so $C_{2} \oplus C_{10} \in \Phi_{\mathbb{Q}}(3)$, which contradicts Theorem \ref{Theorem 2.5}.
\\
\\
\fbox{$C_{2} \oplus C_{12}$}: This has been proven in Lemma \ref{Lemma 3.8}.
\\
\\
\fbox{$C_{2} \oplus C_{18}$}: Assume that $C_2 \oplus C_{18} \subseteq E(K)$. A point $P_3$ of order $3$ on $E'$ is defined over an at most quadratic extension of $\mathbb{Q}$, by \cite[Table 1]{1}. Using Lemma \ref{Lemma 3.2} we can assume that $E'(\mathbb{Q})[3]=C_3$. Since $C_3 \oplus C_3 =E(K)[3] \oplus E'(K)[3] \cong E'(L)[3]$, $C_9 \subseteq E'(L)$ and $E'(L)[2]=C_2 \oplus C_2$, it follows that $C_6 \oplus C_{18} \subseteq E'(L)$. Obviously $G_{E}(2) \ne 2B$. By Theorem \ref{Theorem 2.8}, this is impossible.

\end{proof}
\section{$[K:\mathbb{Q}]=2$}
In this subsection, let $K$ denote a number field such that $[K:\mathbb{Q}]=2$.
\\
\begin{theorem}
Let $K$ be a number field such that $[K:\mathbb{Q}]=2$. If $E(K)$ contains a point of order $n > 1$, then $C_{n} \in \Phi_{\mathbb{Q}}(2) \cup \{ C_{13} \}$.
\end{theorem}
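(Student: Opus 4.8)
The plan is to reduce at once to a situation controlled by two already-established classifications and then simply intersect them.

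First I would clear away the degenerate cases. If $E$ has complex multiplication (in particular if $j(E) \in \{0,1728\}$), then the theorem proved right after Lemma~\ref{Lemma 3.1} gives $E(K)_{tors} \in \Phi_{\mathbb{Q}}(2)$ and we are done. Likewise, if $E$ is a base change of an elliptic curve defined over $\mathbb{Q}$ then $E(K)_{tors} \in \Phi_{\mathbb{Q}}(2)$ by definition. So assume $E$ is non-CM, $j(E) \in \mathbb{Q} \setminus \{0,1728\}$, and $E$ is not a base change. As in the introduction, choose $E'/\mathbb{Q}$ with $j(E') = j(E)$; then $E$ and $E'$ become isomorphic over a field $L \supseteq K$ with $[L:K] = 2$, hence $[L:\mathbb{Q}] = 4$, and the isomorphism induces a group isomorphism $E(L) \cong E'(L)$.

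Now suppose $P_n \in E(K)$ has order $n$. I would read off two constraints on $n$. On the one hand $K$ is a quadratic field, so $E(K)_{tors}$ is one of the groups listed in Theorem~\ref{Theorem 2.2}, and since $C_n \subseteq E(K)_{tors}$, running through the cyclic subgroups of those groups forces $n \in \{1,\dots,16,18\}$. On the other hand $C_n \subseteq E(K) \subseteq E(L) \cong E'(L)$ with $[L:\mathbb{Q}] = 4$, so $E'(L)_{tors}$ is one of the groups listed in Theorem~\ref{Theorem 2.6}, and the cyclic subgroups of those groups force $n \in \{1,\dots,10,12,13,15,16,20,24\}$. Intersecting the two lists leaves $n \in \{1,\dots,10,12,13,15,16\}$, and for each such $n$ one checks against Theorem~\ref{Theorem 2.4} that $C_n \in \Phi_{\mathbb{Q}}(2) \cup \{C_{13}\}$, the value $n = 13$ being the only one not already lying in $\Phi_{\mathbb{Q}}(2)$.

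There is no hard analytic or computational step here: unlike the higher-degree cases, the quadratic case is entirely subsumed by the known sets $\Phi(2)$ and $\Phi_{\mathbb{Q}}(4)$. The only things requiring a little care are (i) cleanly separating off the CM and base-change cases first, so that the twist $E'$ and the quartic field $L$ are genuinely available with $[L:\mathbb{Q}] = 4$, and (ii) the bookkeeping showing that both classifications are in fact needed --- $C_{11}$, $C_{14}$ and $C_{18}$ are eliminated by $\Phi_{\mathbb{Q}}(4)$ (Theorem~\ref{Theorem 2.6}), whereas $C_{20}$ and $C_{24}$ survive that test and are eliminated instead by the fact that a point of order $20$ or $24$ cannot exist over a quadratic field (Theorem~\ref{Theorem 2.2}).
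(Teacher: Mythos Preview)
Your argument is correct and follows the same route as the paper: bound $n$ above using $\Phi(2)$ (Theorem~\ref{Theorem 2.2}), pass to the quartic field $L$ over which $E \cong E'$, and eliminate $n \in \{11,14,18\}$ via $\Phi_{\mathbb{Q}}(4)$ (Theorem~\ref{Theorem 2.6}), leaving only values with $C_n \in \Phi_{\mathbb{Q}}(2) \cup \{C_{13}\}$. The one thing the paper adds that you omit is an explicit construction showing that $C_{13}$ genuinely occurs in $\Phi_{j \in \mathbb{Q}}(2)$ (by twisting an $E''/\mathbb{Q}$ that acquires a $13$-torsion point over a Galois quartic field), so that the set $\Phi_{\mathbb{Q}}(2) \cup \{C_{13}\}$ is sharp; strictly speaking the stated implication does not require this, but without it the appearance of $C_{13}$ in the conclusion is unmotivated.
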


\begin{proof}
Obviously we have $\Phi_{\mathbb{Q}}(2) \subseteq \Phi_{j \in \mathbb{Q}}(2) \subseteq \Phi(2)$. By Theorem \ref{Theorem 2.2} and Theorem \ref{Theorem 2.4} we have $\Phi(2) \setminus \Phi_{\mathbb{Q}}(2) =\{ 11, 13, 14, 18 \}$. Let $n$ be one of those values. Obviously $E/K$ such that $E(K)$ contains a point $P_n$ of order $n$ can't be a base change of an elliptic curve defined over $\mathbb{Q}$, by Theorem \ref{Theorem 2.4}. Therefore, let $E'/\mathbb{Q}$ be an elliptic curve such that $j(E)=j(E')$ and let $L$ be a quartic field over which $E$ and $E'$ are isomorphic. This implies that $E'$ has a point of order $n \in \{ 11, 14, 18\}$, but this is impossible since $C_n \notin \Phi_{\mathbb{Q}}(4)$ by Theorem \ref{Theorem 2.6}.
\\
Let now $E''/\mathbb{Q}$ be an elliptic curve and $L$ a quartic Galois extension of $\mathbb{Q}$ such that $P_{13} \in E''(L)$, where $P_{13}$ is a point of order $13$. Such an elliptic curve $E''$ and a field $L$ exist by \cite[Theorem 1.2]{22}. Denote by $K$ the intermediate field $\mathbb{Q} \subseteq K \subseteq L$. Since $L=K(\sqrt{d})$, consider a twist $E^{d}/K$ of $E''/K$ by $d$. We have 
\[ C_{13} \cong E''(L)[13]=E^{d}(K)[13] \cong E''(K)[13] .\] We conclude that $C_{13} \subseteq E^{d}(K)[13]$ and so $E^{d}/K$ is an elliptic curve with $j(E^d) \in \mathbb{Q}$ defined over a quadratic extension of $\mathbb{Q}$ with a point of order $13$.
\end{proof}
Remark: The sets $\Phi_{j \in \mathbb{Q}}(d)$ for $d=2,3$ were also determined by Manolis Tzortzakis in his thesis under the assumption that we know the set $\Phi(3)$. We did not assume the knowledge of $\Phi(3)$. The author has been working on determining the set $\Phi_{j \in \mathbb{Q}}(4)$ and has obtained partial results.

\section*{Acknowledgements}
The author would like to thank his advisor Filip Najman for introducing him to this problem, for many helpful discussions and for help during the writing of this paper.

\bibliographystyle{abbrv}

\end{document}